\pgfplotsset{compat=1.17}
\newtheorem{Theorem}{Theorem}[section]
\newtheorem{Lemma}[Theorem]{Lemma}
\newtheorem{Remark}[Theorem]{Remark}  
\newtheorem{Assumption}[Theorem]{Assumption} 
\newtheorem{Example}[Theorem]{Example}
\DeclareMathOperator*{\argmax}{arg\,max}
\DeclareMathOperator*{\argmin}{arg\,min}
\newcommand\hl{\color{orange}\bf}
\let\OLDthebibliography\thebibliography
\renewcommand\thebibliography[1]{
  \OLDthebibliography{#1}
  \setlength{\parskip}{1pt}
  \setlength{\itemsep}{0pt plus 0.0ex}
}
\title{Stein's Method of Moments}
\author{Bruno Ebner\footnote{Bruno Ebner, Karlsruher Institut f\"ur Technologie, Germany. E-mail: bruno.ebner@kit.edu}, Adrian Fischer\footnote{Adrian Fischer, Université libre de Bruxelles, Belgium, University of Oxford, UK. E-mail: adrian.fischer@stats.ox.ac.uk}, Robert E. Gaunt\footnote{Robert E. Gaunt, The University of Manchester, UK. E-mail: robert.gaunt@manchester.ac.uk}, Babette Picker\footnote{Babette Picker, Karlsruher Institut f\"ur Technologie, Germany. E-mail: babette.picker@kit.edu}, and Yvik Swan\footnote{
Yvik Swan, Université libre de Bruxelles, Belgium. E-mail: yvik.swan@ulb.be}}
\begin{document}

\date{}
\maketitle

\vspace{-5mm}

\begin{abstract}
Stein operators allow to characterise probability distributions via differential operators. Based on these characterisations, we develop a new method of point estimation for marginal parameters of strictly stationary and ergodic processes, which we call \emph{Stein's Method of Moments} (SMOM). These SMOM estimators satisfy the desirable classical properties such as consistency and asymptotic normality. As a consequence of the usually simple form of the operator, we obtain explicit estimators in cases where standard methods such as (pseudo-) maximum likelihood estimation require a numerical procedure to calculate the estimate. In addition, with our approach, one can choose from a large class of test functions which allows to improve significantly on the moment estimator. Moreover, for i.i.d.\ observations, we retrieve data-dependent functions that result in asymptotically efficient estimators and give a sequence of explicit SMOM estimators that converge to the maximum likelihood estimator. Our simulation study demonstrates that for a number of important univariate continuous probability distributions our SMOM estimators possess excellent small sample behaviour, often outperforming the maximum likelihood estimator and other widely-used methods in terms of lower bias and mean squared error. We also illustrate the pertinence of our approach on a real data set related to rainfall modelisation. 
\end{abstract}

\section{Introduction}

Point estimation in a parametric model is one of the most classical problems in statistics.
In the case of independent and identically distributed (i.i.d.) data, maximum likelihood estimation (MLE) can count itself among the most sought-after, which is mostly due to its simple idea and asymptotic efficiency for regular target distributions. On the other hand, several difficulties can occur including highly complex probability density functions (PDFs), failure of numerical procedures due to local extrema of the likelihood function or to censoring and the complexity of extending the method to the non-i.i.d.\ case. Additionally, the efficient asymptotic behaviour of the MLE does not necessarily guarantee a high performance for smaller sample sizes. 

The method of moments provides a simple alternative to the MLE but requires that the moments of the target distribution can be calculated analytically. This is often the case for basic univariate probability distributions resulting in an explicit estimator which can serve as an initial guess for the numerical procedure in order to calculate the MLE. However, if the moments are of a complicated form, the moment estimator itself can only be computed through a numerical algorithm and loses its simplicity. Moreover, it is well-known that moment estimation is in general outplayed by the MLE regarding the asymptotic behaviour in the i.i.d.\ case. The generalised method of moments was introduced in \cite{hansen1982large} and is applicable
for stationary and ergodic time series and does not require an i.i.d.\ setting. The generalised method of moments incorporates a wide class of estimation techniques such as MLE and the classical method of moments. A difficulty that comes along with the method is the problem of finding a suitable target function. Moreover, estimation can get numerically tedious if the target function is complicated, and necessitates a first-step estimator if one wishes to minimise the asymptotic variance. \par

A vast number of alternative estimation techniques have been developed over the years. Amongst others, different kinds of minimum-distance approaches have been considered that compare characterising functions of the target distributions, such as the Fourier or Laplace transform, to empirical approximations. We refer to \cite[Chapter 3]{adler1998practical} ($\alpha$-stable distributions), \cite{koutrouvelis1982estimation} (Cauchy distribution), \cite{meintanis2016review} (mixtures of normal distributions), \cite{weber2006minimum} (Gompertz and Power exponential distribution among others), to name just a few references. \par

However, the methods mentioned above can run into numerical hardships as soon as the characterising object used for estimation becomes complicated. In this context, several approaches have been developed based on Stein characterisations of probability distributions, which lie at the heart of the powerful probabilistic technique Stein's method (\cite{stein1972bound}). Through Stein characterisations it is possible to eliminate the normalising constant; for example, Stein characterisations based on the \textit{density approach} to Stein's method (\cite{ley2013stein, ley2017distances}) involve the ratio $p'/p$, where $p$ is the density of the target distribution. 

\cite{betsch2021minimum} developed a new class of minimum-distance-type estimators based on Stein characterisations, in which new representations of the cumulative distribution function (CDF), which do not involve the normalising constant, are obtained in terms of an expectation and compare the respective sample mean to the empirical CDF (see also \cite{betsch2018characterizations}).
Recently, \cite{barp2019minimum} (see \cite{oates2022minimum} for a more recent reference) introduced a new class of estimators obtained through minimising a Stein discrepancy, whereupon their method incorporates the score matching approach, a further technique to estimate the parameters of non-normalised model based on the score function (see \cite{hyvarinen2005estimation}). However, through these approaches explicit estimators are only obtained in simple models and estimation becomes computationally challenging as soon as a numerical procedure is required.


This is where we want to tie in. In this paper, we study a new class of point estimators, which we refer to as \emph{Stein's Method of Moments (SMOM)} estimators, that are obtained through a Stein characterisation based on the density approach by applying the corresponding Stein operator to selected test functions and solving the resulting empirical version of the Stein identity for the unknown parameter. This combines the benefits of independence from a possibly complicated normalising constant and the simplicity of the estimator. A similar idea was already proposed in \cite{arnold2001multivariate}, in which the authors considered a generalised version of Hudson's identity in order to develop parameter estimators for exponential families. 
However, our work can be seen as a significant extension in which we consider a larger class of probability distributions and Stein operators. We also develop an asymptotic theory for our Stein estimators, addressing measurability, existence, (strong) consistency and asymptotic normality for marginal parameters of strictly stationary and ergodic processes, without even the need for an i.i.d.\ assumption. We make a further major contribution by addressing the problem of how to choose `optimal' test functions that result in asymptotically efficient estimators, and we are able to obtain sequences of explicit Stein estimators that converge to the MLE. 

Stein's method of moments is a highly universal approach to parameter estimation. Stein's density approach yields tractable Stein characterisations for many of the most important univariate distributions, and, with a suitable Stein characterisation at hand, one can readily deduce estimators with the following desirable features: (i) simple, explicit moment estimators, which through suitable choices of test functions, typically offer improvements on the usual moment estimators in terms of efficiency or mean squared error (MSE); or (ii) asymptotically efficient estimators that remain fully explicit. Moreover, as illustrated in extensive simulations presented in Section \ref{section_applications} and the Supplementary Information, we observe that SMOM estimators often possess excellent small sample behaviour; for a number of important univariate distributions our estimator improve on the MLE and specialised state-of-the-art methods in terms of bias and MSE across a wide range of parameter constellations. For these reasons, we believe that Stein's method of moments should become part of the statistician's toolkit as one of the default parameter estimation methods, at least in the context of univariate continuous probability distributions which is studied in detail in this paper. Moreover, as discussed in Section \ref{section_discussion}, SMOM has recently been extended to multivariate continuous probability distributions (see \cite{fgs23truncated,fgs23}), and the performance of the estimators in this setting remains very competitive, which demonstrates the versatility of SMOM for challenging estimation problems beyond the univariate setting of this paper.
We hope that this paper will inspire further research into this exciting method, and hope to see it further extended to a discrete and multivariate setting beyond the recent work \cite{fgs23truncated,fgs23}.



\vspace{2mm}

\noindent \emph{Notation.} For a real-valued function $(\theta,x) \mapsto g_{\theta}(x)$, where $\theta \in \mathbb{R}^p$ and $x \in \mathbb{R}$, we write $\frac{\partial}{\partial \theta} g_{\theta}(x)$ for its gradient with respect to $\theta=(\theta_1,\ldots,\theta_p)^\top$, which is a column vector of size $p$. If the function $g_{\theta}(x)$ takes values in $\mathbb{R}^q$ with $q \geq 2$, $\frac{\partial}{\partial \theta} g_{\theta}(x)$ is its Jacobian with respect to $\theta$, which is a $(q \times p)$-matrix. By $\frac{\partial}{\partial \theta_i}g_{\theta}(x)$ we mean the partial derivative with respect to $\theta_i$. If we want to address the derivative with respect to the argument in the parentheses (with respect to $x$ in $g_{\theta}(x)$) we simply write $g_{\theta}'(x)$ which remains real-valued or a column vector of size $q$. For a vector $x=(x_1,\ldots,x_n)^\top \in \mathbb{R}^n$, we denote by $\Vert x \Vert=(x_1^2 + \ldots + x_n^2)^{1/2}$ the standard Euclidean norm. Finally, for a (possibly non-square) matrix $X \in \mathbb{R}^{p \times q}$, we let $\Vert X \Vert$ be the spectral norm, which is defined as the square-root of the largest eigenvalue of $X^\top X$.  We also introduce the vectorisation map that stacks the columns of a matrix $M=(m_{ij}$, $1 \leq i \leq p$, $1 \leq j \leq q) \in \mathbb{R}^{p \times q}$, given by
 $  \mathrm{vec}:\mathbb{R}^{p \times q} \rightarrow \mathbb{R}^{pq}: M \mapsto (m_{11},\ldots , m_{p1},  m_{12}, \ldots, m_{p2}, \ldots, \ldots, m_{pq})^{\top}$.

\section{Elements of Stein's Method} \label{section_steins_method}
We begin with a short introduction to the version of Stein's method 
employed in this paper.

Let $\mathbb{P}_{\theta}$ be a probability distribution on $(a,b) \subset \mathbb{R}$ with corresponding differentiable PDF $p_{\theta}(x)$ that depends on a parameter $\theta \in \Theta \subset \mathbb{R}^p$, where we assume that $\mathbb{P}_{\theta_1}=\mathbb{P}_{\theta_2} $ implies $\theta_1=\theta_2 $ for $\theta_1,\theta_2 \in \Theta$. Throughout the paper, we assume  that $\Theta$ is open and convex as well as that $-\infty \leq a < b \leq \infty$ and $p_{\theta}(x)>0$ for all $\theta \in \Theta$ and $x \in (a,b)$. Let $X$ be a real-valued random variable with values in $(a,b)$, $\mathscr{F}_{\theta}$ a class of functions $f:(a,b) \rightarrow \mathbb{R}$, and $\mathcal{A}_{\theta}$ an operator defined on $\mathscr{F}_{\theta}$. We call $(\mathcal{A}_{\theta},\mathscr{F}_{\theta})$ a \emph{Stein pair} for $\mathbb{P}_{\theta}$ if the following is satisfied:
\begin{align}
\mathbb{E}[\mathcal{A}_{\theta}f(X)] = 0  \mbox{ for all } f \in \mathscr{F}_{\theta} \qquad \text{if and only if} \qquad X \sim \mathbb{P}_{\theta};
\label{Stein_equation}
\end{align}
operator $\mathcal{A}_{\theta}$ is called a \emph{Stein operator} for $\mathbb{P}_\theta$, and $\mathscr{F}_\theta$ is the associated \emph{Stein class}. There exist many ways to obtain Stein pairs for any given distribution, see e.g.\ \cite{anastasiou2023stein,ley2013stein}. In this paper, we consider those obtained via the density approach as developed in \cite{ley2017distances,ley2013stein}. First-order density approach Stein operators are of the form
\begin{align} \label{definition_Stein_kernel_operator}
\mathcal{A}_{\theta}f(x)= \frac{(\tau_{\theta}(x)p_{\theta}(x)f(x))'}{p_{\theta}(x)},
\end{align}
where $\tau_{\theta}$ is some differentiable function $\tau_{\theta}:(a,b) \rightarrow \mathbb{R}$; they act on the function class
\begin{align} \label{definition_function_class}
\mathscr{F}_{\theta}:= \bigg\{f:(a,b) \rightarrow \mathbb{R} \ \vert \ f \text{ is differentiable and} \int_a^b \big( f(x) \tau_{\theta}(x) p_{\theta}(x)\big)'\,dx=0  \bigg\}.
\end{align}
The next theorem states that  $(\mathcal{A}_{\theta},\mathscr{F}_{\theta})$ is a Stein pair for $\mathbb{P}_{\theta}$. 
In the Supplementary Information, we give a proof that goes along the lines of \cite[Theorem 2.2]{ley2013stein}. 
\begin{Theorem} \label{theorem_stein_operator}
Let $\mathcal{A}_{\theta}$ be the Stein operator defined in \eqref{definition_Stein_kernel_operator} and $\mathscr{F}_{\theta}$ the corresponding class of functions introduced in \eqref{definition_function_class}. Moreover, assume that $\tau_{\theta}(x) \neq 0$ almost everywhere on $(a,b)$ and let $X$ be a random variable with values in $(a,b)$. Then the Stein characterisation \eqref{Stein_equation} holds. 
\end{Theorem}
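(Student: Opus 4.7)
My plan is to establish the biconditional \eqref{Stein_equation} in the two standard directions, of which only the converse is substantive. For the forward direction, if $X \sim \mathbb{P}_{\theta}$ then by the definition \eqref{definition_Stein_kernel_operator}
\begin{align*}
\mathbb{E}[\mathcal{A}_{\theta} f(X)] \;=\; \int_a^b \frac{(\tau_\theta(x) p_\theta(x) f(x))'}{p_\theta(x)}\, p_\theta(x)\,dx \;=\; \int_a^b (\tau_\theta(x) p_\theta(x) f(x))'\,dx,
\end{align*}
which vanishes for every $f \in \mathscr{F}_\theta$ by the defining integral condition \eqref{definition_function_class}.

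For the converse, I would adopt the classical strategy of inverting the Stein operator against a separating family of targets. Let $X$ be an $(a,b)$-valued random variable such that $\mathbb{E}[\mathcal{A}_\theta f(X)] = 0$ for every $f \in \mathscr{F}_\theta$, and write $F_\theta$ for the CDF of $\mathbb{P}_\theta$. For each fixed $y \in (a,b)$, set $h_y(x) := \mathbf{1}\{x \leq y\} - F_\theta(y)$ and solve the ODE $\mathcal{A}_\theta f_y = h_y$ by integrating $(\tau_\theta p_\theta f_y)' = h_y\, p_\theta$ from the left endpoint. This gives the explicit solution
\begin{align*}
f_y(x) \;=\; \frac{F_\theta(\min(x,y))\,\bigl(1 - F_\theta(\max(x,y))\bigr)}{\tau_\theta(x)\, p_\theta(x)},
\end{align*}
which is well-defined almost everywhere by the standing assumptions $\tau_\theta \neq 0$ a.e.\ and $p_\theta > 0$.

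The remaining tasks are verification. First, I would check that $f_y \in \mathscr{F}_\theta$: the integral $\int_a^b (\tau_\theta p_\theta f_y)'\,dx$ telescopes to the boundary increments of $F_\theta(\min(x,y))(1 - F_\theta(\max(x,y)))$, which vanish because $F_\theta(a^+) = 0$ and $F_\theta(b^-) = 1$. Second, plugging $f_y$ into the assumed identity yields $\mathbb{E}[h_y(X)] = \mathbb{P}(X \leq y) - F_\theta(y) = 0$ for every $y \in (a,b)$, so $X \sim \mathbb{P}_\theta$.

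The principal obstacle is producing the inverting test function $f_y$ and confirming it sits in $\mathscr{F}_\theta$; the fortunate cancellation at the endpoints is what makes the class \eqref{definition_function_class} large enough to separate distributions, and it is precisely the centring of $h_y$ under $\mathbb{P}_\theta$ that secures it. A minor technical subtlety is that $f_y$ has a corner at $x = y$ rather than being globally differentiable, but since $\mathscr{F}_\theta$ only demands an integral condition on the primitive $(\tau_\theta p_\theta f_y)'$ and the piecewise derivatives are integrable and combine correctly, this causes no obstruction; if desired one could also regularise $h_y$ by a smooth bump approximation and pass to the limit.
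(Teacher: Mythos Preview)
Your proposal is correct and follows essentially the same approach as the paper: the forward direction is identical, and for the converse both you and the paper invert the Stein operator against the centred indicator $h_y(x)=\mathbf{1}\{x\le y\}-F_\theta(y)$, with your explicit formula $f_y(x)=F_\theta(\min(x,y))(1-F_\theta(\max(x,y)))/(\tau_\theta(x)p_\theta(x))$ being precisely the evaluated form of the paper's integral expression $f_z(x)=(p_\theta(x)\tau_\theta(x))^{-1}\int_a^x l_z(u)p_\theta(u)\,du$. Your remark on the corner at $x=y$ and the possible mollification is in fact more careful than the paper, which simply asserts $f_z\in\mathscr{F}_\theta$.
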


It is often convenient to use in \eqref{definition_Stein_kernel_operator} the so-called {\it Stein kernel} $\tau_{\theta}(x)=(1/p_{\theta}(x)) \int_x^{b} (\mathbb{E}[X]-y)p_{\theta}(y)\,dy$, $x \in (a,b)$, whose corresponding density approach  Stein operator is
\begin{align}\label{sop}
\mathcal{A}_{\theta}f(x)= \tau_{\theta}(x)f'(x) + (\mathbb{E}[X]-x)f(x).
\end{align} 
This last operator takes a simple form in many cases. For instance  $\tau_{\theta} $ is polynomial for  members of the Pearson family (see \cite[Theorem 1, p.\ 65]{stein86} and \cite[Lemma 2.9]{gms19}). 
We refer to \cite{ernst2020first,saumard2019weighted} for an overview of Stein kernels and their properties.
Other choices of functions $\tau_\theta$ in \eqref{definition_Stein_kernel_operator} are also sometimes better suited. 

\begin{Example}[Gaussian distribution] \label{example_gaussian}
Consider the Gaussian distribution $N(\mu,\sigma^2)$ with parameter $\theta=(\mu,\sigma^2)$, $\mu \in \mathbb{R}$, $\sigma^2 >0$.
A simple calculation gives that the Stein kernel is $\tau_{\theta}(x)=\sigma^2$. We have $\mathbb{E}[X]=\mu$ and retrieve from (\ref{sop}) the well-known Stein operator of \cite{stein1972bound},
\begin{align}\label{steinop1}
\mathcal{A}_{\theta}f(x)=\sigma^2f'(x)+(\mu- x)f(x).
\end{align}
\end{Example}

\begin{Example}[Gamma distribution]
\label{example_gamma}
Consider the gamma distribution $\Gamma(\alpha,\beta)$ with parameter $\theta=(\alpha,\beta)$, $\alpha, \beta >0$, and density $p_{\theta}(x)=\beta^{\alpha}x^{\alpha-1}e^{-\beta x}/\Gamma(\alpha)$, $x >0$.
The Stein kernel is $\tau_{\theta}(x)=x$. Since $\mathbb{E}[X]=\alpha/\beta$, we recover the gamma Stein operator of \cite{diaconis1991closed},
\begin{align}\label{gammasteineqn}
\mathcal{A}_{\theta}f(x)=xf'(x)+(\alpha-\beta x )f(x).
\end{align}
\end{Example}

\section{Stein's Method of Moments} 

\subsection{Definition and properties} \label{estimators_def_prop}
Let $\{X_n, n \in \mathbb{Z}\}$ be a real-valued strictly stationary and ergodic discrete process defined on a common probability space $(\Omega,\mathcal{F},\mathbb{P})$. In order to clarify the terminology we elaborate on what we mean by strict stationarity and ergodicity. We say that $\{X_n,n\in \mathbb{Z}\}$ is strictly stationary if
$
    \{X_n, n \in \mathbb{Z} \} =_D \{X_{n+k}, n \in \mathbb{Z} \}
$
for each $k \in \mathbb{Z}$. Moreover, let $\zeta:\Omega \rightarrow \Omega$ be measurable such that $X_{n+1}(\omega)=X_n(\zeta(\omega))$ for each $\omega \in \Omega$ and $n \in \mathbb{N}$. Then we say that $\{X_n, n\in \mathbb{Z}\}$ is ergodic if $\zeta$ is measure-preserving ($\mathbb{P}(\zeta^{-1}(A))=\mathbb{P}(A)$ for all $A \in \mathcal{F}$) and the $\sigma$-algebra of invariant events $\mathcal{I}=\{A \in \mathcal{F} \, \vert \, \zeta^{-1}(A)=\zeta(A) \}$ is $\mathbb{P}$-trivial, i.e.\ $\mathbb{P}(A) \in \{0,1\}$ for all $A \in \mathcal{I}$. We assume that the marginal distribution of each $X_n$, $n \in \mathbb{Z}$, is $\mathbb{P}_{\theta_0}$ for some $\theta_0 \in  \Theta$. 
Now suppose that the measures $\mathbb P_{\theta}$ are characterised  through  Stein pairs $(\mathcal{A}_{\theta}, \mathscr{F}_{\theta})$ and let $\mathscr{F} = \cap_{\theta \in \Theta} \mathscr{F}_\theta$. 

For the purpose of estimating the unknown parameter $\theta_0$ from a sample $X_1,\ldots,X_n$ drawn from the stochastic process $\{X_n,n\in \mathbb{Z}\}$, we choose $p$ measurable test functions $f_1,\ldots,f_p$ (belonging to $\mathscr{F}$)  and, in light of \eqref{Stein_equation}, replace the expectations with their empirical counterparts. Therefore, we get the following system of equations:
\begin{equation}\label{stein_equation_system}
    \frac{1}{n}\sum_{i=1}^n \mathcal{A}_{\theta}f(X_i) = 0,
\end{equation}
where we write $\mathcal{A}_{\theta}f:\Theta \times (a,b) \rightarrow \mathbb{R}^p$ for the function defined by
$
(\theta,x) \mapsto \mathcal{A}_{\theta}f(x):=(\mathcal{A}_{\theta}f_1(x),\ldots,\mathcal{A}_{\theta}f_p(x))^\top.
$
In the following, we will refer to \eqref{stein_equation_system} as the \textit{empirical} Stein identity. Moreover, we will call any solution to this system of equations with respect to $\theta$ a \textit{Stein estimator}, which we denote by $\hat{\theta}_n$.

With this definition at hand, one observes that Stein estimators can be seen as moment estimators (resp.\ generalised moment estimators as proposed in \cite{hansen1982large}), whereupon we suggest suitable target functions through Stein's method. The necessary conditions on the test functions $f_1,\ldots,f_p$, the Stein operator $\mathcal{A}_{\theta}$ and the target distribution $\mathbb{P}_{\theta}$ in order to achieve existence, measurability and asymptotic normality of the Stein estimator $\hat{\theta}_n$ will be introduced below.  
Subsequently, we will impose the following assumptions. 
\begin{Assumption} \label{general_assumptions} \leavevmode
\begin{description} 
\item[(a)] Let $X \sim \mathbb{P}_{\theta_0}$ and $\theta \in \Theta$. Then $f = (f_1, \ldots, f_p) \in \mathscr{F}$ is such that $\mathbb{E}[\mathcal{A}_{\theta}f(X)]=0$ if and only if $\theta=\theta_0$.
\item[(b)] Let $q \geq p$ and $X \sim \mathbb{P}_{\theta_0}$. We can write $\mathcal{A}_{\theta}f(x) =M(x)g(\theta)$ for some measurable $p \times q$ matrix $M$ with $\mathbb{E}[\Vert M(X) \Vert] < \infty $ and a continuously differentiable function $g=(g_1,\ldots, g_q)^\top:\Theta \rightarrow \mathbb{R}^q $ for all $\theta \in \Theta$, $x \in (a,b)$. We also assume that $\mathbb{E}[M(X)]\frac{\partial}{\partial \theta} g(\theta) \vert_{\theta=\theta_0}$ is invertible.
\end{description}
\end{Assumption}
Assumption \ref{general_assumptions}(a) assures that the true parameter $\theta_0$ can be well identified by means of the Stein operator  $\mathcal A_\theta$; this assumption can be easily verified (for a proper choice of test functions) for operators of the form \eqref{definition_Stein_kernel_operator} with Theorem \ref{theorem_stein_operator}. Assumption \ref{general_assumptions}(b) requires that the parameters can be well separated from the sample. Moreover, if the function $g$ is fairly simple, we are likely to obtain explicit estimators; this turns out to be the case for all examples considered in this paper. 
\begin{Theorem} \label{theorem_consistency}
Suppose Assumption \ref{general_assumptions}(a)--(b) is satisfied. The probability that a solution to \eqref{stein_equation_system}, $\hat{\theta}_n$, exists and is measurable converges to $1$ as $n \rightarrow \infty$. Furthermore, $\hat{\theta}_n$ is strongly consistent in the following sense: There is a set $A \subset \Omega$ with $\mathbb{P}(A)=1$ such that for all $\omega \in A$  there exists $N=N(\omega) \in \mathbb{N}$ such that $\hat{\theta}_n$ exists for all $n \geq N$ and 
$\hat{\theta}_n(\omega) \rightarrow \theta_0.$
\end{Theorem}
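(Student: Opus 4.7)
Assumption \ref{general_assumptions}(b) rewrites the empirical Stein identity \eqref{stein_equation_system} as $G_n(\theta) := \bar M_n g(\theta) = 0$, where $\bar M_n := n^{-1}\sum_{i=1}^n M(X_i)$. Its population counterpart $G(\theta) := \mathbb{E}[M(X)]g(\theta)$ vanishes only at $\theta_0$ by Assumption \ref{general_assumptions}(a), and $G$ is $C^1$ with Jacobian $J := \mathbb{E}[M(X)]\,\partial g/\partial\theta|_{\theta_0}$ invertible by Assumption \ref{general_assumptions}(b). I would view $G_n$ as a perturbation of $G$ driven by the empirical average $\bar M_n$ and handle existence, measurability, and consistency in one stroke via the inverse function theorem applied in the joint variable $(c,\theta)$.

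Step 1. Birkhoff's ergodic theorem, applied entry-wise and using $\mathbb{E}[\|M(X)\|]<\infty$, yields a set $A \subset \Omega$ of probability one on which $\bar M_n \to \mathbb{E}[M(X)]$.

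Step 2. Consider the smooth map $H(c,\theta) := c\,g(\theta)$ defined on $\mathbb{R}^{p\times q} \times \Theta$. Since $H(\mathbb{E}[M(X)],\theta_0)=0$ and $\partial H/\partial\theta|_{(\mathbb{E}[M(X)],\theta_0)}=J$ is invertible, the implicit function theorem supplies open neighbourhoods $U$ of $\mathbb{E}[M(X)]$ and $W$ of $\theta_0$ together with a $C^1$ map $\Phi: U\to W$ satisfying $H(c,\Phi(c))=0$ for all $c\in U$ and $\Phi(\mathbb{E}[M(X)])=\theta_0$. I would then set
\begin{equation*}
\hat\theta_n := \Phi(\bar M_n)\,\mathbf{1}_{\{\bar M_n\in U\}} + \theta^{\ast}\,\mathbf{1}_{\{\bar M_n\notin U\}},
\end{equation*}
for an arbitrary fixed $\theta^{\ast}\in\Theta$. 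Each $\hat\theta_n$ is measurable because $\Phi$ is continuous, $\bar M_n$ is measurable, and $U$ is open.

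Step 3. On $A$, one has $\bar M_n \in U$ for all sufficiently large $n=n(\omega)$, which guarantees both that $G_n(\hat\theta_n) = H(\bar M_n,\Phi(\bar M_n))=0$, so $\hat\theta_n$ is a genuine solution of \eqref{stein_equation_system}, and that $\hat\theta_n = \Phi(\bar M_n) \to \Phi(\mathbb{E}[M(X)])=\theta_0$ by continuity of $\Phi$. Since $\mathbb{P}(\bar M_n\in U)\to 1$ by Step 1, this delivers both the probability-tending-to-one existence-and-measurability claim and strong consistency.

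The main obstacle, in my view, is that the empirical equation $G_n=0$ may have multiple solutions and one must isolate a measurable, consistent one. The IFT-in-$(c,\theta)$ argument circumvents this entirely: $\Phi$ picks out exactly the branch of solutions passing through $\theta_0$, and its smoothness automatically produces a measurable estimator that converges almost surely. Everything else (the ergodic theorem, continuity bookkeeping) is routine.
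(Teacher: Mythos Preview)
Your proposal is correct and follows essentially the same approach as the paper: apply the implicit function theorem to $F(M,\theta)=Mg(\theta)$ at $(\mathbb{E}[M(X)],\theta_0)$ to obtain a $C^1$ solution map on a neighbourhood $U$ of $\mathbb{E}[M(X)]$, then use the ergodic theorem to send $\bar M_n$ into $U$ almost surely and conclude via continuity. The only cosmetic difference is that you spell out the definition of $\hat\theta_n$ on $\{\bar M_n\notin U\}$ and the measurability argument slightly more explicitly than the paper does.
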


As we will see in Section \ref{section_applications} and the further examples in the Supplementary Information, the new estimators will mostly be solutions to systems of linear equations which exist and are measurable with probability $1$ for any sample size if $\Theta=\mathbb{R}^p$. Nonetheless, it can happen that an estimator returns a value which lies outside of the truncation domain if the parameter space is a strict subset of $\mathbb{R}^p$. These issues will be addressed separately for each example in Section \ref{section_applications} and the further examples in the Supplementary Information.

Asymptotic normality can be obtained similarly as for the classical moment estimators. We state the result in the next theorem. We slightly change the meaning of $\hat{\theta}_n$ as we need our estimator to be a random variable in order to establish weak convergence. To this end, let $X \sim \mathbb{P}_{\theta_0}$. Define the function $F(M,\theta)=Mg(\theta)$, where $M \in \mathbb{R}^{p \times q}$ and $g$ is as in Assumption \ref{general_assumptions}(b). In the proof of Theorem \ref{theorem_consistency}, we see that there are neighbourhoods $U \subset \mathbb{R}^{p \times q}$, $V \subset \mathbb{R}^p$ of $\mathbb{E}[M(X)]$ and $\theta_0$ such that there exists a continuously differentiable function $h:U \rightarrow V$ with $F(M,h(M))=0$ for all $M \in U$. 
We now define $A_n=\big\{n^{-1} \sum_{i=1}^n M(X_i) \in U \big\}$, and note that it is shown in the proof of Theorem \ref{theorem_consistency} that $\mathbb{P}(A_n) \rightarrow 1$ as $n \rightarrow \infty$.
We now denote by $\hat{\theta}_n$ any measurable map $\hat{\theta}_n : \Omega \rightarrow \Theta$ that solves \eqref{stein_equation_system} for any $\omega  \in A_n$ and is equal to any other measurable function outside of $A_n$. 
We will also need some further assumptions, which can be efficiently stated by recalling a version of a central limit theorem for strictly stationary and ergodic time series stated in \cite{hannan1973central} and originally proved in \cite{gordin1969central} (see also \cite{hansen1982large}).
\begin{Theorem} \label{ergodic_central_limit_theorem}
    Let $\{Y_n,n \in \mathbb{Z}\}$ be strictly stationary and ergodic with values in $\mathbb{R}^p$. Moreover, suppose that $\mathbb{E}[Y_1]=0$ and $\mathbb{E}[\Vert Y_1 Y_1^{\top} \Vert]<\infty$ as well as
    $
        \mathbb{E}[\mathbb{E}[Y_0 \, \vert \, Y_{-j},\dots] \mathbb{E}[Y_0 \, \vert \, Y_{-j},\dots]^{\top} ] \rightarrow 0$, $j \rightarrow \infty$.
    Furthermore, for
   $
        Y_j'= \mathbb{E}[Y_0 \, \vert \, Y_{-j},\dots] - \mathbb{E}[Y_0 \, \vert \, Y_{-j-1},\dots]$, $ j\geq 0$,   
    we suppose that $\sum_{j=0}^{\infty}\mathbb{E}[(Y_j')^{\top} Y_j']^{1/2} < \infty$. Then
    $
        n^{-1/2}\sum_{i=1}^n Y_i\rightarrow_D N(0,\Xi),
    $
    where $\Xi = \sum_{i \in \mathbb{Z}} \mathbb{E}[Y_0Y_i^{\top}]$.
\end{Theorem}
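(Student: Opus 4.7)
The plan is to follow the classical martingale approximation approach of Gordin. Define the backward filtration $\mathcal{F}_j := \sigma(Y_j, Y_{j-1}, \ldots)$, and for each $j \geq 0$ introduce the projection operators $P_j Y := \mathbb{E}[Y \, \vert \, \mathcal{F}_{-j}] - \mathbb{E}[Y \, \vert \, \mathcal{F}_{-j-1}]$, so that $Y_j' = P_j Y_0$. The first step is to show that, as an $L^2$-series,
\begin{equation*}
Y_0 = \sum_{j=0}^{\infty} P_j Y_0,
\end{equation*}
which follows from the hypothesis $\mathbb{E}[\mathbb{E}[Y_0 \,\vert\, \mathcal{F}_{-j}]\mathbb{E}[Y_0\,\vert\, \mathcal{F}_{-j}]^{\top}] \to 0$ (so $\mathbb{E}[Y_0\,\vert\, \mathcal{F}_{-j}] \to 0$ in $L^2$) combined with the telescoping identity $\mathbb{E}[Y_0 \,\vert\, \mathcal{F}_0] - \mathbb{E}[Y_0\,\vert\, \mathcal{F}_{-k-1}] = \sum_{j=0}^{k} P_j Y_0$ and $\mathbb{E}[Y_0\,\vert\, \mathcal{F}_0]=Y_0$.

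Next, for the shift $\zeta$ from the definition of ergodicity, set $P_j Y_i := (P_j Y_0) \circ \zeta^i$. Stationarity and the defining property of $\zeta$ give that, for each fixed $j$, the sequence $\{P_j Y_i, i \in \mathbb{Z}\}$ is a strictly stationary and ergodic martingale difference array with respect to the filtration $\{\mathcal{F}_{i-j}\}_i$, and has the same distribution as $\{Y_j'\}$. I would then, for each truncation level $k \in \mathbb{N}$, write
\begin{equation*}
\frac{1}{\sqrt{n}}\sum_{i=1}^n Y_i = \frac{1}{\sqrt{n}}\sum_{i=1}^n \sum_{j=0}^{k} P_j Y_i + \frac{1}{\sqrt{n}}\sum_{i=1}^n \sum_{j=k+1}^{\infty} P_j Y_i =: S_n^{(k)} + R_n^{(k)}.
\end{equation*}
For the main term $S_n^{(k)}$, rearrange the order of summation: each inner sum $\sum_i P_j Y_i$ is a stationary ergodic martingale (in $i$), so the multivariate stationary ergodic martingale CLT (see Billingsley or Brown) yields $S_n^{(k)} \to_D N(0,\Xi^{(k)})$ as $n \to \infty$ for some covariance matrix $\Xi^{(k)}$ whose entries are finite thanks to Assumption $\mathbb{E}[\|Y_1Y_1^{\top}\|] < \infty$.

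For the remainder $R_n^{(k)}$, stationarity together with orthogonality of the martingale differences $P_j Y_i$ across $j$ gives $\mathbb{E}\|R_n^{(k)}\|^2 \leq \big(\sum_{j=k+1}^\infty \mathbb{E}[(Y_j')^{\top}Y_j']^{1/2}\big)^2$, which tends to $0$ as $k \to \infty$ by the summability hypothesis. A standard $3\varepsilon$-argument (letting $n\to\infty$ first, then $k \to \infty$) then transfers the Gaussian limit from $S_n^{(k)}$ to the full sum, and simultaneously identifies $\Xi = \lim_k \Xi^{(k)} = \sum_{i \in \mathbb{Z}} \mathbb{E}[Y_0 Y_i^{\top}]$ by a direct computation of cross-covariances using stationarity and the martingale-difference structure.

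The main obstacle I expect is keeping careful track of multivariate covariance bounds and justifying the exchange of summations in $S_n^{(k)}$; the decomposition itself and the martingale CLT step are standard once the projections are set up, but verifying that $R_n^{(k)}$ is uniformly (in $n$) small in $L^2$ hinges crucially on the strong $\ell^1$-type summability assumption $\sum_j \mathbb{E}[(Y_j')^{\top}Y_j']^{1/2} < \infty$, and the matching of the limit covariance with $\sum_{i \in \mathbb{Z}} \mathbb{E}[Y_0 Y_i^{\top}]$ requires care because the latter series is only conditionally summable in general.
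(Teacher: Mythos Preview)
The paper does not provide its own proof of this theorem: it is stated as a recalled result, with the surrounding text explicitly attributing it to \cite{hannan1973central} and \cite{gordin1969central} (see also \cite{hansen1982large}). There is therefore no ``paper's proof'' to compare against.

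That said, your proposal follows exactly the classical Gordin martingale-approximation scheme that underlies those cited references, so in spirit it matches what the paper implicitly relies on. One technical point to watch: in your treatment of $S_n^{(k)}$ you describe each $\sum_i P_j Y_i$ as a stationary ergodic martingale and then invoke the martingale CLT term-by-term, but the sum $\sum_{j=0}^{k} P_j Y_i$ is \emph{not} a martingale difference sequence in $i$ with respect to a single filtration (the different $j$'s correspond to different filtrations $\{\mathcal{F}_{i-j}\}_i$). The standard fix is to re-index via the change of variables $\ell = i - j$ so that $\sum_{i=1}^n \sum_{j=0}^k P_j Y_i = \sum_{\ell} \big(\sum_{j} P_j Y_{\ell+j}\big)$ becomes, up to boundary terms that are $o_P(\sqrt{n})$, a genuine martingale difference sum adapted to $\{\mathcal{F}_\ell\}$; this is where the finiteness of $k$ (for the truncated sum) or the $\ell^1$-summability (for the full sum) is used to control the edge effects. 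Your covariance identification and remainder bound are otherwise the right ingredients.
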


We can now state our asymptotic normality result.

\begin{Theorem} \label{theorem_asymptotic_normality}
Let $X \sim \mathbb{P}_{\theta_0}$. Suppose Assumption \ref{general_assumptions}(a)--(b) is fulfilled. Moreover, assume that the matrix $\mathbb{E}[\mathrm{vec}(M(X))\mathrm{vec}(M(X))^{\top}]$ exists and that the time series $\{Y_n,n \in \mathbb{Z}\}$, where $Y_n=\mathrm{vec}(M(X_n))$, $n \in \mathbb{Z}$, satisfies the assumptions of Theorem \ref{ergodic_central_limit_theorem}. Now let
\begin{align*}
\Psi=\sum_{j \in \mathbb{Z}}\mathbb{E}\big[\mathcal{A}_{\theta_0}f(X_0) \mathcal{A}_{\theta_0}f(X_{j})^\top\big] \quad \text{and} \quad G=\mathbb{E} \bigg[\frac{\partial}{\partial \theta}\mathcal{A}_{\theta}f(X) \Big\vert_{\theta=\theta_0} \bigg],
\end{align*}
and let $\hat{\theta}_n$ be defined as in the preceding paragraph. Then the sequence $\sqrt{n}(\hat{\theta}_n-\theta_0)$ is asymptotically normal with mean zero and covariance matrix $G^{-1}\Psi G^{-\top}$.
\end{Theorem}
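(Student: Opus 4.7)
My plan is to exploit the product structure $\mathcal{A}_\theta f(x) = M(x) g(\theta)$ together with the implicit function $h$ already constructed in the proof of Theorem \ref{theorem_consistency}, so that the asymptotic normality of $\hat{\theta}_n$ reduces via a delta-method argument to a central limit theorem for the empirical averages $\bar{M}_n = n^{-1} \sum_{i=1}^n M(X_i)$. Writing $Y_n = \mathrm{vec}(M(X_n))$ and $\tilde{Y}_n = Y_n - \mathbb{E}[Y_0]$, I would first apply Theorem \ref{ergodic_central_limit_theorem} to the centred process $\tilde{Y}_n$ (the moment and martingale-approximation conditions carry over trivially, since centring changes $Y_n$ only by a deterministic constant) to conclude
\begin{align*}
\sqrt{n}\big(\mathrm{vec}(\bar{M}_n) - \mathrm{vec}(\mathbb{E}[M(X)])\big) \rightarrow_D N(0,\tilde{\Xi}), \qquad \tilde{\Xi} = \sum_{j \in \mathbb{Z}} \mathbb{E}[\tilde{Y}_0 \tilde{Y}_j^\top].
\end{align*}

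Next, since $\mathbb{P}(A_n) \to 1$ and on $A_n$ the estimator satisfies $\hat{\theta}_n = h(\bar{M}_n)$ for the continuously differentiable function $h$ constructed in the proof of Theorem \ref{theorem_consistency}, a standard Slutsky argument lets me apply the delta method to $h$ (composed with the inverse vectorisation) at $\mathbb{E}[M(X)]$. The Jacobian of $h$ there is obtained by implicit differentiation of $F(M,h(M)) = M g(h(M)) = 0$; using the identity $M g(\theta) = (g(\theta)^\top \otimes I_p)\,\mathrm{vec}(M)$ one finds
\begin{align*}
\frac{\partial h}{\partial \mathrm{vec}(M)^\top}\bigg|_{\mathbb{E}[M(X)]} = -\bigg(\mathbb{E}[M(X)] \frac{\partial}{\partial \theta} g(\theta)\Big|_{\theta=\theta_0}\bigg)^{-1} \big(g(\theta_0)^\top \otimes I_p\big) = -G^{-1}\big(g(\theta_0)^\top \otimes I_p\big),
\end{align*}
where the last identity uses $\frac{\partial}{\partial \theta} \mathcal{A}_\theta f(x) = M(x) \frac{\partial}{\partial \theta} g(\theta)$ and linearity of the expectation to identify $G$.

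The delta method then yields $\sqrt{n}(\hat{\theta}_n - \theta_0) \rightarrow_D N(0,V)$ with
\begin{align*}
V = G^{-1} \big(g(\theta_0)^\top \otimes I_p\big) \tilde{\Xi} \big(g(\theta_0) \otimes I_p\big) G^{-\top}.
\end{align*}
To match the stated covariance I would use the Kronecker identity $(g(\theta_0)^\top \otimes I_p) Y_n = M(X_n) g(\theta_0) = \mathcal{A}_{\theta_0} f(X_n)$. Applied term-by-term to the sum defining $\tilde{\Xi}$, and noting that the centring of $Y_n$ leaves the sandwiched quantity unchanged because $\mathbb{E}[\mathcal{A}_{\theta_0} f(X_n)] = 0$, this gives
\begin{align*}
\big(g(\theta_0)^\top \otimes I_p\big) \tilde{\Xi} \big(g(\theta_0) \otimes I_p\big) = \sum_{j \in \mathbb{Z}} \mathbb{E}\big[\mathcal{A}_{\theta_0} f(X_0) \mathcal{A}_{\theta_0} f(X_j)^\top\big] = \Psi,
\end{align*}
so that $V = G^{-1} \Psi G^{-\top}$, as claimed.

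The main difficulty is bookkeeping rather than conceptual. One must be careful with the vectorisation conventions when computing the Jacobian of $h$, so that the Kronecker factor $g(\theta_0)^\top \otimes I_p$ appears on the correct side, and one must verify that the ergodic CLT conditions transfer from $Y_n$ to $\tilde{Y}_n$ and that the contribution of $A_n^c$ is negligible in the distributional limit (by Slutsky, since $\mathbb{P}(A_n^c) \to 0$).
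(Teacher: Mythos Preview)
Your proposal is correct and follows essentially the same route as the paper: apply the ergodic CLT to $\mathrm{vec}(\bar{M}_n)$, invoke the delta method through the implicit function $h$ from Theorem~\ref{theorem_consistency}, compute the Jacobian of $h$ via implicit differentiation, and then collapse the Kronecker sandwich using $(g(\theta_0)^\top\otimes I_p)\,\mathrm{vec}(M(x))=\mathcal{A}_{\theta_0}f(x)$. If anything, your write-up is slightly more careful than the paper's in that you explicitly centre $Y_n$ before invoking Theorem~\ref{ergodic_central_limit_theorem} and explicitly note that $\mathbb{E}[\mathcal{A}_{\theta_0}f(X_n)]=0$ kills the centring constant in the final covariance.
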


\begin{Remark}
    Note that in the case where $\{X_n, n \in \mathbb{Z}\}$ is i.i.d., the assumptions of Theorem \ref{ergodic_central_limit_theorem} are easily verified and the matrix $\Psi$ appearing in the asymptotic covariance simplifies to
    \begin{align*}
        \Psi=\mathbb{E}\big[\mathcal{A}_{\theta_0}f(X) \mathcal{A}_{\theta_0}f(X)^\top\big], \quad X \sim \mathbb{P}_{\theta_0}.
    \end{align*}
\end{Remark}

Let us consider two simple examples to demonstrate our estimation method and its flexibility. For that purpose, we write $\overline{f(X)}=n^{-1} \sum_{i=1}^nf(X_i)$ for a measurable function $f:(a,b) \rightarrow \mathbb{R}$.
\begin{Example}[Gaussian distribution, continuation of Example \ref{example_gaussian}]
Since we have two unknown parameters, we choose two test functions $f_1,f_2$ and therefore from (\ref{steinop1}) get
\begin{align*}
\begin{cases}
\overline{f_1(X)}\mu+\overline{f_1'(X)} \sigma^2 = \overline{Xf_1(X)} \\
\overline{f_2(X)}\mu+\overline{f_2'(X)} \sigma^2  = \overline{Xf_2(X)}.
\end{cases}
\end{align*}
By solving this system of linear equations for $\mu$ and $\sigma^2$ we obtain the Stein estimators
\begin{align}\label{norest}
\hat{\mu}_n=\frac{\overline{f_2'(X)} \ \overline{Xf_1(X)}-\overline{f_1'(X)} \ \overline{Xf_2(X)}}{\overline{f_1(X)} \ \overline{f_2'(X)}- \overline{f_1'(X)} \ \overline{f_2(X)}}, \quad
\hat{\sigma}_n^2=\frac{\overline{f_1(X)} \ \overline{Xf_2(X)} -\overline{f_2(X)} \ \overline{Xf_1(X)}}{\overline{f_1(X)} \ \overline{f_2'(X)}- \overline{f_1'(X)} \ \overline{f_2(X)}}.
\end{align}
Taking $f_1(x)=1$, $f_2(x)=x$ yields the MLE or moment estimators $\hat{\mu}_n=\overline{X}$ and $
\hat{\sigma}_n^2=\overline{X^2}-\overline{X}^2$. 
\end{Example}

\begin{Example}[Gamma distribution, continuation of Example \ref{example_gamma}]\label{example_gamma2}
We  choose two different test functions $f_1$, $f_2 $, and from (\ref{gammasteineqn}) we readily
obtain the estimators
\begin{align*}
\hat{\alpha}_n=\frac{\overline{Xf_2(X)} \ \overline{Xf_1'(X)}-\overline{Xf_1(X)} \ \overline{Xf_2'(X)}}{\overline{Xf_1(X)} \ \overline{f_2(X)}-\overline{f_1(X)} \ \overline{Xf_2(X)}}, \quad
\hat{\beta}_n=\frac{\overline{f_2(X)} \ \overline{Xf_1'(X)}-\overline{f_1(X)} \ \overline{Xf_2'(X)}}{\overline{Xf_1(X)} \ \overline{f_2(X)}-\overline{f_1(X)} \ \overline{Xf_2(X)}}.
\end{align*}
By choosing $f_1(x)=1$ and $f_2(x)=x$ we retrieve the moment estimators 
\begin{align*}
\hat{\alpha}_n^{\mathrm{MO}}=\frac{\overline{X}^2}{\overline{X^2}-\overline{X}^2} \quad \text{and} \quad 
\hat{\beta}_n^{\mathrm{MO}}=\frac{\overline{X}}{\overline{X^2}-\overline{X}^2}. 
\end{align*}
Moreover, by choosing $f_1(x)=1$ and $f_2(x)=\log x$ we obtain the logarithmic estimators
\begin{align} \label{gamma_log_estimators}
\begin{split}
\hat{\alpha}_n^{\mathrm{LOG}}=\frac{\overline{X}}{\overline{X \log X}-\overline{X} \ \overline{\log X}} \quad \text{and} \quad 
\hat{\beta}_n^{\mathrm{LOG}}=\frac{1}{\overline{X \log X}-\overline{X} \ \overline{\log X}},
\end{split}
\end{align} 
which show a behaviour close to asymptotic efficiency and were obtained through the generalised gamma distribution in \cite{ye2017closed} (see also \cite{wiens2003class} for an earlier reference). 
\end{Example}

\subsection{Optimal functions} \label{subsection_optimal_function}

We show that it is possible to achieve asymptotic efficiency under certain regularity conditions using Stein estimators by using specific parameter-dependent test functions. To this end, we suppose in this section without further notice that the sequence of random variables $\{X_n, n \in \mathbb{Z}\}$ is i.i.d.\ (for possible extensions to non-i.i.d.\ data see Remark \ref{remark_extension_ergodic_optimal functions}). In addition, we assume that the Stein operator $\mathcal{A}_{\theta}$ can be written in the form \eqref{definition_Stein_kernel_operator}. Within this framework we compare our estimators to the MLE, which we will denote by 
$\hat{\theta}_n^{\mathrm{ML}}$, and 
which, under certain regularity conditions on the likelihood function, is defined through the equation
\begin{align} \label{MLE_definition}
\frac{\partial}{\partial \theta} \overline{\log p_{\theta}(X)} \Big\vert_{\theta=\hat{\theta}_n^{\mathrm{ML}}}=0.
\end{align}
It is well-known that for regular probability distributions the expectation of the latter expression is equal to zero.
It is a standard result that, under certain regularity conditions, a suitable standardisation of the MLE $\hat{\theta}_n^{\mathrm{ML}}$ is asymptotically efficient with covariance matrix $I_{\mathrm{ML}}^{-1}(\theta_0)$, the inverse of the Fisher-information matrix $I_{\mathrm{ML}}(\theta)$.

Motivated by the definition of the MLE, we consider the score function as the right-hand side of the Stein identity
\begin{align}
\mathcal{A}_{\theta}f(x)= \frac{\partial}{\partial \theta} \log p_{\theta}(x). 
\label{Stein_operator_equal_score}
\end{align}
This is an ordinary differential equation whose solution $f_{\theta}$ clearly depends on the unknown parameter $\theta$. If the Stein operator is of the form \eqref{definition_Stein_kernel_operator}, then the solution of \eqref{Stein_operator_equal_score} is given by
\begin{align}
f_{\theta}(x)= \big(f_{\theta}^{(1)}(x),\ldots,f_{\theta}^{(p)}(x)\big)^{\top} = \frac{\frac{\partial}{\partial \theta} P_{\theta}(x) +c}{\tau_{\theta}(x)p_{\theta}(x)}, \quad x \in (a,b), \label{eq:opsol}
\end{align}
where $c \in \mathbb{R}$ and $P_{\theta} $ is the CDF corresponding to $p_{\theta}$, with the convention that  $f_{\theta}(x)=0$ at all $x\in (a, b)$ such that  $\tau_{\theta}(x)=0$. We will refer to the functions \eqref{eq:opsol} as the \emph{optimal functions}. Thus, 
$
\overline{\mathcal{A}_{\theta}f_{\theta}(X)}=0
$
is the maximum likelihood equation rewritten in terms of Stein operators. One can now use a consistent first-step estimator $\tilde{\theta}_n$ for the unknown parameter $\theta$ in $f_{\theta}= \big(f_{\theta}^{(1)},\ldots,f_{\theta}^{(p)}\big)^{\top}$ and resolve the system of equations \eqref{stein_equation_system} with respect to these test functions. This holds the advantage that estimators may remain explicit if the Stein operator is simple. Mathematically speaking, given a first-step estimator $\tilde{\theta}_n$, we define $\hat{\theta}_n^{\star}$ through the equation
\begin{align} \label{definition_two_step_esti}
\overline{\mathcal{A}_{\hat{\theta}_n^\star} f_{\tilde{\theta}_n}(X)}=0
\end{align}
if such a solution exists. In this setting, the matrix $M$ from Assumption \ref{general_assumptions}(b) depends on the parameter $\theta$ through data-dependent test functions. Hence, we introduce a new set of assumptions.

\begin{Assumption} \label{assumptions_optimal_func} \leavevmode
\begin{description} 
\item[(a)] $\tilde{\theta}_n$ is a consistent estimator, i.e. $\tilde{\theta}_n \rightarrow_{\mathbb{P}} \theta_0$.
\item[(b)] Let $X \sim \mathbb{P}_{\theta_0}$ and $\theta_1, \theta_2 \in \Theta$.  Then    $f_{\theta_2}\in \mathscr{F}$, and $\mathbb{E}[\mathcal{A}_{\theta_1}f_{\theta_2}(X)]=0$ if and only if $\theta_1=\theta_0$.
\item[(c)] For $q \geq p$, we can write $\mathcal{A}_{\theta_1}f_{\theta_2}(x) =M_{\theta_2}(x)g(\theta_1)$ for some measurable $p \times q$ matrix $M_{\theta_2}$ and a continuously differentiable function $g=(g_1,\ldots, g_q)^\top:\Theta \rightarrow \mathbb{R}^q $ for all $\theta_1,\theta_2 \in \Theta$, $x \in (a,b)$. Moreover, we assume that $\mathbb{E}[M_{\theta_0}(X)]\frac{\partial}{\partial \theta} g(\theta) \vert_{\theta=\theta_0}$, where $X \sim \mathbb{P}_{\theta_0}$, is invertible and the function $\theta \mapsto \mathrm{vec}(M_{\theta}(x))$ is continuously differentiable on $\Theta$ for all $x \in (a,b)$.
\item[(d)] For $X \sim \mathbb{P}_{\theta_0}$ there exist two functions $F_1$, $F_2$ on $(a,b)$ with $\mathbb{E}[F_i(X)]<\infty$, $i=1,2$, and compact neighbourhoods $\Theta', \Theta''$ of $\theta_0$ such that $\Vert M_{\theta}(x) \Vert  \leq F_1(x) $ for all $\theta \in \Theta'$ and $\Vert \frac{\partial}{\partial \theta} \mathrm{vec}(M_{\theta}(x)) \Vert \leq F_2(x)$ for all $\theta \in \Theta''$, $x \in (a,b)$.
\end{description}
\end{Assumption}
Assumptions \ref{assumptions_optimal_func}(b)--(c) are adapted versions of Assumptions \ref{general_assumptions}(a)--(b) with the supplement that the optimal function $f_{\theta}$ needs to be an element of the Stein class $\mathscr{F}$ for each $\theta \in \Theta$. The invertibility of $\mathbb{E}[M_{\theta_0}(X)]\frac{\partial}{\partial \theta} g(\theta) \vert_{\theta=\theta_0}$, $X \sim \mathbb{P}_{\theta_0}$, in (c) is easily verified for Stein operators that are linear in $\theta$. However, we have the additional Assumption \ref{assumptions_optimal_func}(d) which can be tedious to verify if $f_{\theta}$ is complicated. Nevertheless, the latter assumption is satisfied for all our applications in Section \ref{section_applications} and the Supplementary Information. 

\begin{Theorem} \label{theorem_consistency_two_step}
   Suppose  Assumptions \ref{assumptions_optimal_func}(a)--(d)
   are fulfilled. The probability that a solution to \eqref{definition_two_step_esti}, $\hat{\theta}_n^\star$, exists and is measurable converges to $1$ as $n \rightarrow \infty$, and $\hat{\theta}_n^\star$ is (weakly) consistent in the following sense: There is a sequence of sets $C_n \subset \Omega$, $n \in \mathbb{N}$ with $\mathbb{P}(C_n)\rightarrow 1$ such that, for all $\epsilon >0$,
$\mathbb{P}(\vert \hat{\theta}_n^\star - \theta_0\vert > \epsilon \, \vert \, C_n ) \rightarrow 0.$
\end{Theorem}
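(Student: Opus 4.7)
The plan is to mirror the strategy used for Theorem \ref{theorem_consistency}, with the main new ingredient being a uniform law of large numbers needed to accommodate the fact that the test functions depend on data through $\tilde{\theta}_n$. Writing the empirical identity as $\overline{M_{\tilde{\theta}_n}(X)}\,g(\hat{\theta}_n^\star)=0$ and setting $F(M,\theta)=Mg(\theta)$, the starting point is that $F(\mathbb{E}[M_{\theta_0}(X)],\theta_0)=\mathbb{E}[\mathcal{A}_{\theta_0}f_{\theta_0}(X)]=0$ by Assumption \ref{assumptions_optimal_func}(b), while the Jacobian
$$\frac{\partial F}{\partial \theta}\bigg|_{(\mathbb{E}[M_{\theta_0}(X)],\theta_0)}=\mathbb{E}[M_{\theta_0}(X)]\,\frac{\partial}{\partial\theta}g(\theta)\Big|_{\theta=\theta_0}$$
is invertible by Assumption \ref{assumptions_optimal_func}(c). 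The implicit function theorem then supplies open neighbourhoods $U$ of $\mathbb{E}[M_{\theta_0}(X)]$ and $V$ of $\theta_0$ together with a continuously differentiable map $h:U\to V$ such that $F(M,h(M))=0$ on $U$ and $h(\mathbb{E}[M_{\theta_0}(X)])=\theta_0$; this is exactly the device used in the proof of Theorem \ref{theorem_consistency}.

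The central step is then to show $\overline{M_{\tilde{\theta}_n}(X)}\to_{\mathbb{P}}\mathbb{E}[M_{\theta_0}(X)]$. I would decompose
$$\overline{M_{\tilde{\theta}_n}(X)}-\mathbb{E}[M_{\theta_0}(X)]=\Big(\overline{M_{\tilde{\theta}_n}(X)}-\phi(\tilde{\theta}_n)\Big)+\Big(\phi(\tilde{\theta}_n)-\phi(\theta_0)\Big),$$
where $\phi(\theta):=\mathbb{E}[M_\theta(X)]$. The dominating bound $\|M_\theta(x)\|\leq F_1(x)$ from Assumption \ref{assumptions_optimal_func}(d), together with the continuity of $\theta\mapsto M_\theta(x)$ from the continuous differentiability asserted in (c), lets me apply dominated convergence to deduce that $\phi$ is continuous on $\Theta'$. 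Combined with Assumption \ref{assumptions_optimal_func}(a), the continuous mapping theorem then handles the second summand. For the first summand, the same dominating function plus the bound $\|\frac{\partial}{\partial\theta}\mathrm{vec}(M_\theta(x))\|\leq F_2(x)$ on $\Theta''$ yields a standard uniform strong law on the compact neighbourhood $\Theta'\cap\Theta''$,
$$\sup_{\theta\in\Theta'\cap\Theta''}\bigg\|\frac{1}{n}\sum_{i=1}^n M_\theta(X_i)-\phi(\theta)\bigg\|\to 0\quad\text{a.s.},$$
obtained via a bracketing/equicontinuity argument using the Lipschitz control provided by $F_2$; feeding the (eventually interior) random argument $\tilde{\theta}_n$ into this uniform estimate then forces the first summand to vanish in probability as well.

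Finally, define $C_n=\big\{\overline{M_{\tilde{\theta}_n}(X)}\in U\big\}$, so that $\mathbb{P}(C_n)\to 1$ by the previous step. On $C_n$, set $\hat{\theta}_n^\star:=h\big(\overline{M_{\tilde{\theta}_n}(X)}\big)$, which is measurable as a composition of measurable maps and solves \eqref{definition_two_step_esti} by construction; off $C_n$, extend $\hat{\theta}_n^\star$ by any fixed measurable function. Continuity of $h$ together with the convergence $\overline{M_{\tilde{\theta}_n}(X)}\to_{\mathbb{P}}\mathbb{E}[M_{\theta_0}(X)]$ and the continuous mapping theorem then give $\hat{\theta}_n^\star\to_{\mathbb{P}}h(\mathbb{E}[M_{\theta_0}(X)])=\theta_0$ on $C_n$, which is the conditional weak consistency claimed. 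I expect the main obstacle to be setting up the uniform law of large numbers cleanly enough that the random plug-in $\tilde{\theta}_n$ can be evaluated inside it — ensuring the dominating bounds from (d) apply on a common compact neighbourhood that eventually contains $\tilde{\theta}_n$ — since the implicit function theorem argument and the continuity reductions are then routine extensions of the proof of Theorem \ref{theorem_consistency}.
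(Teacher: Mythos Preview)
Your proposal is correct and follows essentially the same approach as the paper: set up $F(M,\theta)=Mg(\theta)$, apply the implicit function theorem at $(\mathbb{E}[M_{\theta_0}(X)],\theta_0)$, decompose $\overline{M_{\tilde{\theta}_n}(X)}-\mathbb{E}[M_{\theta_0}(X)]$ into a uniform-LLN piece and a continuity-of-$\phi$ piece, and conclude via the continuous mapping theorem. The only cosmetic differences are that the paper explicitly introduces an auxiliary event $B_n=\{\tilde{\theta}_n\in K\}$ before invoking the uniform law (taking $C_n=A_n\cap B_n$ rather than just your $A_n$), and it appeals only to the envelope $F_1$ for the uniform strong law rather than the Lipschitz bound $F_2$ you mention; neither affects the argument.
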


We have only shown weak consistency in the previous theorem in contrast to strong consistency in Theorem~\ref{theorem_consistency}. However, if we have $\tilde{\theta}_n \overset{\mathrm{a.s.}}{\longrightarrow} \theta_0$, it is an easy task to show that we also have $\hat{\theta}_n^\star \overset{\mathrm{a.s.}}{\longrightarrow} \theta_0$ (in the sense of Theorem \ref{theorem_consistency}). In the following theorem, we show that, under some additional technical assumptions, the two-step Stein estimators are asymptotically normal and reach asymptotic efficiency. Again, in order to maneuver around existence and measurability issues, we define $\hat{\theta}_n^\star$ to be a random variable that is equal to the solution of \eqref{definition_two_step_esti} on the sets $C_n$ as defined in the proof of Theorem \ref{theorem_consistency_two_step} and equal to some other measurable function otherwise.
\begin{Theorem} \label{theorem_efficiency_optimal_functions}
Suppose that Assumptions \ref{assumptions_optimal_func}(a)--(d) are satisfied. Moreover, assume that $p_{\theta}$ is differentiable with respect to $\theta$ and
\begin{itemize}
\item[(i)] the sequence of random vectors $\sqrt{n}(\tilde{\theta}_n-\theta_0)$ is uniformly tight;
\item[(ii)] $\mathcal{A}_{\theta}$ is of the form \eqref{definition_Stein_kernel_operator} with $\tau_{\theta}$ differentiable with respect to $\theta$;
\item[(iii)] we have $\lim_{x \rightarrow a,b} \frac{\partial}{\partial \theta} \big( 
p_{\theta}(x)\tau_{\theta}(x) \big) \big\vert_{\theta=\theta_0}f_{\theta_0}(x) =0$;
\item[(iv)] and $I_{\mathrm{ML}}(\theta_0)$ exists and is finite.
\end{itemize}
Then, for $\hat{\theta}_n^\star$ as defined in the preceding paragraph, 
$\sqrt{n}(\hat{\theta}_n^{\star} - \theta_0) \rightarrow_D N(0,I_{\mathrm{ML}}^{-1}(\theta_0))$, as $n \rightarrow \infty$.
\end{Theorem}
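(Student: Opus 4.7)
The plan is a Z-estimator argument combined with two key algebraic computations that exploit the Stein identity and the very definition of the optimal functions. Set $\Psi_n(\theta_1,\theta_2) = n^{-1}\sum_{i=1}^n \mathcal{A}_{\theta_1}f_{\theta_2}(X_i)$, so that on the sets $C_n$ from the proof of Theorem \ref{theorem_consistency_two_step} (which satisfy $\mathbb{P}(C_n)\to 1$) one has $\Psi_n(\hat{\theta}_n^\star, \tilde{\theta}_n)=0$. By Theorem \ref{theorem_consistency_two_step} and Assumption \ref{assumptions_optimal_func}(a), both $\hat{\theta}_n^\star$ and $\tilde{\theta}_n$ converge in probability to $\theta_0$. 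A multivariate Taylor expansion (applied row-wise) then yields
\begin{equation*}
0 \;=\; \Psi_n(\theta_0,\theta_0) \;+\; A_n(\hat{\theta}_n^\star - \theta_0) \;+\; B_n(\tilde{\theta}_n - \theta_0),
\end{equation*}
where $A_n,B_n$ are the Jacobians of $\Psi_n$ with respect to its first and second argument, respectively, evaluated at suitable mean-value points.

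First I would show that $A_n\rightarrow_{\mathbb{P}} G$ and $B_n\rightarrow_{\mathbb{P}} H$, where
\begin{equation*}
G = \mathbb{E}\!\left[\tfrac{\partial}{\partial\theta_1}\mathcal{A}_{\theta_1}f_{\theta_0}(X)\big|_{\theta_1=\theta_0}\right], \qquad H = \mathbb{E}\!\left[\tfrac{\partial}{\partial\theta_2}\mathcal{A}_{\theta_0}f_{\theta_2}(X)\big|_{\theta_2=\theta_0}\right].
\end{equation*}
This uniform-LLN step rests on the domination in Assumption \ref{assumptions_optimal_func}(d) over a compact neighbourhood of $\theta_0$, the i.i.d.\ strong law, and the continuity from Assumption \ref{assumptions_optimal_func}(c). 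The crucial simplification is that $H=0$: since $f_{\theta_2}\in\mathscr{F}\subseteq\mathscr{F}_{\theta_0}$ by Assumption \ref{assumptions_optimal_func}(b), Theorem \ref{theorem_stein_operator} yields $\mathbb{E}_{\theta_0}[\mathcal{A}_{\theta_0}f_{\theta_2}(X)]=0$ for every $\theta_2\in\Theta$, and interchanging differentiation and expectation (again justified by (d)) gives $H=0$. Together with condition (i) this shows $\sqrt{n}\,B_n(\tilde{\theta}_n-\theta_0)=o_{\mathbb{P}}(1)$, so the first-step estimator drops out asymptotically.

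The core computation is the identification $G=-I_{\mathrm{ML}}(\theta_0)$. Writing $Q_\theta(x)=\tau_\theta(x)p_\theta(x)$ and using \eqref{definition_Stein_kernel_operator},
\begin{equation*}
\mathbb{E}_{\theta_0}[\mathcal{A}_{\theta_1}f_{\theta_0}(X)] \;=\; \int_a^b \big(Q_{\theta_1}f_{\theta_0}\big)'(x)\,\frac{p_{\theta_0}(x)}{p_{\theta_1}(x)}\,dx.
\end{equation*}
Differentiating under the integral sign at $\theta_1=\theta_0$ (justified by (ii), (iv), and (d)) and using that $\frac{\partial}{\partial\theta_1}(p_{\theta_0}/p_{\theta_1})\big|_{\theta_0}=-\frac{\partial}{\partial\theta}\log p_{\theta_0}$, the Jacobian splits into a boundary term
\begin{equation*}
\int_a^b \Big(\tfrac{\partial}{\partial\theta}Q_\theta\big|_{\theta_0}\,f_{\theta_0}\Big)'(x)\,dx \;=\; \Big[\tfrac{\partial}{\partial\theta}Q_\theta\big|_{\theta_0}\,f_{\theta_0}\Big]_a^b,
\end{equation*}
which vanishes by condition (iii), plus an integrated term $-\int_a^b \mathcal{A}_{\theta_0}f_{\theta_0}(x)\,\frac{\partial}{\partial\theta}\log p_{\theta_0}(x)^\top\,p_{\theta_0}(x)\,dx$. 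By the defining relation \eqref{Stein_operator_equal_score}, $\mathcal{A}_{\theta_0}f_{\theta_0}(x)=\frac{\partial}{\partial\theta}\log p_{\theta_0}(x)$, and hence this integrated term equals $-I_{\mathrm{ML}}(\theta_0)$.

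Rearranging the Taylor expansion then gives
\begin{equation*}
\sqrt{n}(\hat{\theta}_n^\star - \theta_0) \;=\; I_{\mathrm{ML}}(\theta_0)^{-1}\,\sqrt{n}\,\Psi_n(\theta_0,\theta_0) \;+\; o_{\mathbb{P}}(1),
\end{equation*}
and since $\sqrt{n}\,\Psi_n(\theta_0,\theta_0)=n^{-1/2}\sum_{i=1}^n\frac{\partial}{\partial\theta}\log p_{\theta_0}(X_i)$ is a sum of centred i.i.d.\ score vectors with covariance $I_{\mathrm{ML}}(\theta_0)$, the classical CLT and Slutsky's lemma yield the claim. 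The main obstacle is the rigorous justification of the two interchanges of differentiation and integration needed for $G$ and $H$, as well as the convergence of the Jacobians evaluated at mean-value points; everything comes down to the uniform dominating bound in Assumption \ref{assumptions_optimal_func}(d) together with the boundary-vanishing condition (iii) that is tailor-made to kill the boundary term in the computation of $G$.
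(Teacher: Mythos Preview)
Your proposal is correct and follows essentially the same route as the paper's proof: a two-argument Taylor expansion of the empirical Stein identity, the identification $H=0$ via the Stein characterisation (differentiating the identity $\mathbb{E}_{\theta_0}[\mathcal{A}_{\theta_0}f_{\theta_2}(X)]=0$ in $\theta_2$), and the computation $G=-I_{\mathrm{ML}}(\theta_0)$ by splitting $\partial_{\theta}\mathcal{A}_{\theta}f_{\theta_0}$ into a boundary term that vanishes thanks to condition~(iii) and a Fisher-information term coming from $\mathcal{A}_{\theta_0}f_{\theta_0}=\partial_{\theta}\log p_{\theta_0}$. The only cosmetic difference is that the paper uses the integral-form Taylor remainder (which avoids the row-wise mean-value issue in the multivariate case and makes the uniform-LLN argument for the Jacobians slightly cleaner), whereas you use mean-value points; both lead to the same limits under Assumption~\ref{assumptions_optimal_func}(d).
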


We refer the reader as well to \cite[Section 6]{neweylarge}, in which the asymptotic theory of two-step estimators is studied - although under slightly different assumptions and with the additional restriction that the first-step estimate needs to be obtained through the generalised method of moments.

\begin{Remark} \label{remark_extension_ergodic_optimal functions}
    It is possible to extend the results from Theorems \ref{theorem_consistency_two_step} and \ref{theorem_efficiency_optimal_functions} to strictly stationary and ergodic time series as introduced in Section \ref{estimators_def_prop}. For Theorem \ref{theorem_consistency_two_step}, it suffices to apply an adapted uniform strong law of large numbers as stated in \cite[Theorem 2.1]{hansen2012proofs} (note that with Assumption \ref{assumptions_optimal_func}(d) the random function $\theta \rightarrow M_{\theta}(X), X \sim \mathbb{P}_{\theta_0}$, is automatically first-moment-continuous, compare \cite[p.\ 206]{degroot2005optimal}). With the latter result together with Theorem \ref{ergodic_central_limit_theorem} we can also generalise Theorem \ref{theorem_efficiency_optimal_functions}, although we need the additional assumption that the sequence $\{\mathcal{A}_{\theta_0}f_{\theta_0}(X_n), n\in \mathbb{Z}\}$ satisfies the assumptions of Theorem \ref{ergodic_central_limit_theorem}. We then get
    \begin{align*}
        \sqrt{n}(\hat{\theta}_n^{\star} - \theta_0) \stackrel{D}{\longrightarrow} N\bigg(0,I_{\mathrm{ML}}^{-1}(\theta_0)  \bigg(\sum_{j \in \mathbb{Z}} \mathbb{E}\big[\mathcal{A}_{\theta_0}f_{\theta_0}(X_0)\mathcal{A}_{\theta_0}f_{\theta_0}(X_{j})^{\top} \big] \bigg) I_{\mathrm{ML}}^{-1}(\theta_0)\bigg), \quad n \rightarrow \infty.
    \end{align*}
\end{Remark}

\begin{Remark}
    There is another possibility to achieve asymptotic efficiency of point estimators. \cite{carrasco2000generalization} proposed a generalised method-of-moments-type estimator with a continuum of moment conditions. The idea is based on using an uncountably infinite number of moment conditions, i.e.\ a class of functions $h^t:\Theta \times (a,b) \rightarrow \mathbb{R}, t \in \Pi \subset \mathbb{R}$ such that $\mathbb{E}[h^t(\theta_0,X)]=0$ for all $t \in \Pi$, where $X \sim \mathbb{P}_{\theta_0}$. Under some conditions, the sequence of functions $n^{-1/2} \sum_{i=1}^n h^t(\theta,X_i)$, $t \in \Pi$,
    converges to some zero-mean Gaussian process with covariance operator $\Upsilon$ by the functional central limit theorem as $n \rightarrow \infty$. Let $\Upsilon^{\alpha_n}$ be its Tikhonov regularisation with smoothing term $\alpha_n$. Then under additional assumptions it can be shown that the estimator 
    \begin{align*}
        \hat{\theta}_n= \mathrm{arg\,min}_{\theta \in \Theta} \big\Vert (\Upsilon_n^{\alpha_n})^{-1/2} h_n^t(\theta,X) \big\Vert_{\mathscr{L}^2},
    \end{align*}
    where $(\Upsilon_n^{\alpha_n})^{-1/2}$ is an estimate of $(\Upsilon^{\alpha_n})^{-1/2}$, $h_n^t(\theta,X)=n^{-1} \sum_{i=1}^n h^t(\theta,X_i)$ and $\Vert \cdot \Vert_{\mathscr{L}^2}$ is the standard ${\mathscr{L}^2}$-norm with respect to some positive measure, is asymptotically efficient (see \cite{carrasco2014asymptotic}). Note that this procedure requires an estimation of a covariance operator and is computationally ambitious. For more information and some applications see also \cite{carrasco2007efficient,carrasco2002efficient,carrasco2002simulation}.
\end{Remark}

\begin{Example}[Gamma distribution, continuation of Example \ref{example_gamma2}]\label{gammaexample5}
Let us now introduce a two-step Stein estimator for the gamma distribution. We first recall that the CDF of the gamma distribution is given by $P_{\theta}(x)=\gamma(\alpha,\beta x)/\Gamma(\alpha)$,
where $\gamma(\cdot,\cdot)$ is the lower incomplete gamma function. With this formula at hand, we can calculate the optimal functions, which are given by 
\begin{align*}
& f_{\theta}^{(1)}(x)= e^{\beta  x}\Big(\frac{\gamma (\alpha , \beta x)}{(\beta x)^\alpha}\big( \log (\beta  x)-\psi(\alpha ) \big) -  \frac{1}{\alpha^2} \, _2F_2(\alpha,\alpha;1+\alpha,1+\alpha;-\beta x) \Big), \quad f_{\theta}^{(2)}(x)=\frac{1}{\beta},
\end{align*}
where $\, _2F_2$ denotes the generalised hypergeometric function. Taking $\hat{\theta}_n^{\mathrm{LOG}}$ as a first-step estimate results in a two-step estimator, which we denote by $\hat{\theta}_n^{\mathrm{ST}}$. This estimator takes a rather complicated form, but remains completely explicit. In the Supplementary Information, we show that the assumptions of Theorems \ref{theorem_consistency_two_step} and \ref{theorem_efficiency_optimal_functions} hold, which implies (strong) consistency and asymptotic efficiency of $\hat{\theta}_n^{\mathrm{ST}}$. Simulation results are reported in the Supplementary Information, which show that the Stein estimator $\hat{\theta}_n^{\mathrm{ST}}$ has a marginally improved performance in terms of lower bias and mean square error over the (non-explicit) MLE in small sample sizes across a range of parameter values.

As the CDF of the gamma distribution is expressed in terms of special functions, the optimal functions take a rather complicated form. For distributions with simpler CDFs, simpler optimal functions can be obtained; see, for example, the Cauchy distribution in Section \ref{cauchysec}.
\end{Example}

In the remainder of this section, we study the sequence of Stein estimators which is obtained as follows: Choose some $\theta^0 \in \Theta $ as a value for $\theta$ in $f_{\theta}$ and solve for the two-step Stein estimator $\hat{\theta}_n^{\star}$. Take then the obtained estimate as a new value for $\theta$ in $f_{\theta}$ in order to update the Stein estimator $\hat{\theta}_n^{\star}$. 
Formally speaking, we consider the sequence of Stein estimators $\hat{\theta}_n^{(m)}$ defined by 
\begin{align} \label{definition_iterative_procedure}
0 =   \overline{\mathcal{A}_{\hat{\theta}_n^{(m+1)}}f_{\hat{\theta}_n^{(m)}}(X)} ,
\end{align}
where $\hat{\theta}_n^{(0)} =\theta^0 \in \Theta$ is the starting value of the iterating process. Moreover, let $\Theta_0 \subset \Theta$ be compact and convex with $\theta_0, \theta^0 \in \Theta_0$. We briefly discuss the existence of such a sequence. It is clear from Theorem \ref{theorem_consistency_two_step} that, for fixed $m \in \mathbb{N}$, the probability that $\hat{\theta}_n^{(m)}$ exists converges to $1$. However, this does not guarantee the existence of the sequence. Therefore, when we study the asymptotic behaviour of the sequence $\hat{\theta}_n^{(m)}$, $m \in \mathbb{N}$, we have to assume that such a sequence of solutions of \eqref{definition_iterative_procedure} exist. Before stating the theorem, we introduce a new set of assumptions.
\begin{Assumption} \label{assumptions_iteratice_proc} \leavevmode
\begin{description} 
\item[(a)] The MLE exists and is unique with probability converging to $1$. Moreover, we assume that the MLE is consistent (in the sense of Theorem \ref{theorem_consistency_two_step}) and that if the MLE exists, it is characterised by \eqref{MLE_definition}.
\item[(b)] Let $X \sim \mathbb{P}_{\theta_0}$ and $\theta_1, \theta_2 \in \Theta_0$. Then $f_{\theta_2} \in \mathscr{F}$, and $\mathbb{E}[\mathcal{A}_{\theta_1}f_{\theta_2}(X)]=0$ if and only if $\theta_1=\theta_0$.
\item[(c)] For $q \geq p$, we can write $\mathcal{A}_{\theta_1}f_{\theta_2}(x) =M_{\theta_2}(x)g(\theta_1)$ for some measurable $p \times q$ matrix $M_{\theta_2}$ and $g=(g_1,\ldots, g_q)^\top:\Theta \rightarrow \mathbb{R}^q $ continuously differentiable for all $\theta_1,\theta_2 \in \Theta_0$, $x \in (a,b)$. Moreover, we assume that $\mathbb{E}[M_{\theta}(X)]\frac{\partial}{\partial \theta} g(\theta) $ (where $X \sim \mathbb{P}_{\theta_0}$) is invertible for all $\theta \in \Theta_0$ and that the function $\theta \mapsto \mathrm{vec}(M_{\theta}(x))$ is continuously differentiable on $\Theta_0$ for all $x \in (a,b)$.
\item[(d)] For $X \sim \mathbb{P}_{\theta_0}$, there exist two functions $F_1$, $F_2$ on $(a,b)$ with $\mathbb{E}[F_i(X)]<\infty$, $i=1,2$, such that $\Vert M_{\theta}(x) \Vert  \leq F_1(x)$ and $\Vert \frac{\partial}{\partial \theta} \mathrm{vec}(M_{\theta}(x)) \Vert \leq F_2(x)$ for all $\theta \in \Theta_0$, $x \in (a,b)$.
\end{description}
\end{Assumption}
Assumptions  \ref{assumptions_iteratice_proc}(b)--(d) introduced above are mostly equivalent to Assumptions \ref{assumptions_optimal_func}(b)--(d), although we have a slight modification in (c). Here, we require the matrix  $\mathbb{E}[M_{\theta}(X)]\frac{\partial}{\partial \theta} g(\theta)$, $X \sim \mathbb{P}_{\theta_0}$, to be invertible for all $\theta \in \Theta_0$, in contrast to \ref{assumptions_optimal_func}(c) in which this needs to be the case only for $\theta=\theta_0$, which can be difficult to verify, especially if $f_{\theta}$ is complicated.
\begin{Theorem} \label{theorem_sequence_stein_estimator}
Suppose that Assumptions \ref{assumptions_iteratice_proc}(a)--(d) hold. Then, for each sequence $\hat{\theta}_n^{(m)}$, $m \in \mathbb{N}$, satisfying \eqref{definition_iterative_procedure}, there exists a sequence of sets $A_n \subset \Omega$, $n \in \mathbb{N}$, with $\mathbb{P}(A_n) \rightarrow 1$ as $n \rightarrow \infty$ such that for each $n$ we have that on $A_n$,
$
\hat{\theta}_n^{(m)} \rightarrow \hat{\theta}_n^{\mathrm{ML}}$ as $m \rightarrow  \infty$.
\end{Theorem}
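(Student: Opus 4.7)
The plan is to reformulate the recursion as iteration of an $n$-dependent map $T_n$, defined implicitly by $\overline{M_{\theta}(X)}\,g(T_n(\theta))=0$, and to show that on a high-probability event $T_n$ is a strict contraction on a neighbourhood $V$ of $\theta_0$ whose unique fixed point in $V$ is the MLE $\hat{\theta}_n^{\mathrm{ML}}$. Convergence of $\hat{\theta}_n^{(m)}=T_n^m(\theta^0)$ to $\hat{\theta}_n^{\mathrm{ML}}$ then follows from a Banach-type argument, once the initial iterate is brought inside the contraction ball.

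The first step is to identify $\hat{\theta}_n^{\mathrm{ML}}$ as a fixed point of $T_n$. From \eqref{Stein_operator_equal_score} one has $\mathcal{A}_{\theta}f_{\theta}(x)=\frac{\partial}{\partial\theta}\log p_{\theta}(x)$ pointwise, so $\overline{\mathcal{A}_{\theta}f_{\theta}(X)}=\frac{\partial}{\partial\theta}\overline{\log p_{\theta}(X)}$, and Assumption \ref{assumptions_iteratice_proc}(a) characterises the MLE by the vanishing of this quantity; hence $T_n(\hat{\theta}_n^{\mathrm{ML}})=\hat{\theta}_n^{\mathrm{ML}}$ on the event where the MLE exists.

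The second and central step is to exploit the Stein identity to force the Jacobian of $T_n$ to vanish near $\theta_0$. Since $f_{\theta}\in\mathscr{F}\subset\mathscr{F}_{\theta_0}$ for every $\theta\in\Theta_0$ by Assumption \ref{assumptions_iteratice_proc}(b), the Stein pair property yields $\mathbb{E}[M_{\theta}(X)]\,g(\theta_0)=0$ for every $\theta\in\Theta_0$; the dominating bound of Assumption \ref{assumptions_iteratice_proc}(d) permits differentiation under the expectation, giving the crucial identity
\begin{equation*}
\frac{\partial}{\partial\theta}\mathbb{E}[M_{\theta}(X)]\,g(\theta_0)=0\qquad\text{for all } \theta\in\Theta_0.
\end{equation*}
Applying the implicit function theorem to $H_n(\theta_2,\theta_1)=\overline{M_{\theta_2}(X)}\,g(\theta_1)$, whose $\theta_1$-Jacobian is invertible near the diagonal $\theta_1=\theta_2=\theta_0$ by Assumption \ref{assumptions_iteratice_proc}(c) and the uniform SLLN, yields the existence of $T_n$ on a neighbourhood $V$ of $\theta_0$ together with
\begin{equation*}
\frac{\partial T_n}{\partial\theta_2}(\theta_2)=-\Big(\overline{M_{\theta_2}(X)}\,\tfrac{\partial g}{\partial\theta}\Big|_{T_n(\theta_2)}\Big)^{-1}\tfrac{\partial\overline{M_{\theta_2}(X)}}{\partial\theta_2}\,g(T_n(\theta_2)).
\end{equation*}
By the uniform SLLN (e.g.\ \cite[Theorem 2.1]{hansen2012proofs}) under Assumption \ref{assumptions_iteratice_proc}(d), both $\overline{M_{\theta}(X)}$ and its $\theta$-derivative converge a.s.\ uniformly on $\Theta_0$ to their expectations; combining this with the vanishing identity above gives $\|\partial T_n/\partial\theta_2\|\leq\kappa_n$ on $V$ with $\kappa_n\to 0$ a.s.\ In particular, for large $n$, $T_n$ is a strict contraction on $V$.

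Finally, I would let $A_n$ be the intersection of three events of probability tending to one: (i) $T_n$ has contraction constant at most $1/2$ on $V$ with $T_n(V)\subset V$; (ii) $\hat{\theta}_n^{\mathrm{ML}}\in V$, by Assumption \ref{assumptions_iteratice_proc}(a); and (iii) $\hat{\theta}_n^{(1)}=T_n(\theta^0)\in V$, which follows by applying Theorem \ref{theorem_consistency} to the fixed test function $f_{\theta^0}$ (whose hypotheses are implied by Assumptions \ref{assumptions_iteratice_proc}(b)--(d)). On $A_n$, all iterates with $m\geq 1$ lie in $V$, and the mean value inequality gives $\|\hat{\theta}_n^{(m+1)}-\hat{\theta}_n^{\mathrm{ML}}\|\leq\tfrac{1}{2}\|\hat{\theta}_n^{(m)}-\hat{\theta}_n^{\mathrm{ML}}\|$, so the sequence converges geometrically to $\hat{\theta}_n^{\mathrm{ML}}$. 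The main obstacle is the uniform control: one must extend the diagonal invertibility in Assumption \ref{assumptions_iteratice_proc}(c) to a bi-parameter neighbourhood by continuity, verify the uniform SLLN for $\overline{M_{\theta}(X)}$ and its $\theta$-derivative under the dominating bounds of Assumption \ref{assumptions_iteratice_proc}(d), and assemble the events so that all these estimates hold simultaneously with probability tending to one.
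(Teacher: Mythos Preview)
Your approach coincides with the paper's: both recast the recursion as iteration of a self-map ($L_n$ in the paper, your $T_n$), show its Lipschitz constant tends to zero by exploiting the Stein identity $\mathbb{E}[M_{\theta}(X)]g(\theta_0)=0$ for all $\theta\in\Theta_0$ (whence the population Jacobian $\partial_\theta\mathbb{E}[M_\theta(X)]\,g(\theta_0)$ vanishes), and close via the Banach fixed-point theorem with $\hat{\theta}_n^{\mathrm{ML}}$ as the unique fixed point. The paper reaches the contraction bound through a four-term telescoping $K_n^{(1)},\ldots,K_n^{(4)}$ of the mean-value integral for $L_n(\theta_1)-L_n(\theta_2)$; your direct use of the implicit-function Jacobian formula is more compact but equivalent.

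There is, however, a gap in step (iii). You construct $T_n$ only on a small neighbourhood $V$ of $\theta_0$, whereas the starting value $\theta^0\in\Theta_0$ may lie far from $\theta_0$; to bring $\hat{\theta}_n^{(1)}$ into $V$ you invoke Theorem~\ref{theorem_consistency} with the fixed test functions $f_{\theta^0}$, but that theorem requires $\mathbb{E}[M_{\theta^0}(X)]\tfrac{\partial g}{\partial\theta}\big|_{\theta=\theta_0}$ to be invertible, and Assumption~\ref{assumptions_iteratice_proc}(c) only guarantees invertibility on the diagonal $\mathbb{E}[M_{\theta}(X)]\tfrac{\partial g}{\partial\theta}\big|_{\theta=\theta}$. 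The continuity extension you flag handles a bi-parameter neighbourhood of the diagonal, not the off-diagonal pair $(\theta^0,\theta_0)$. The paper sidesteps this by building $L_n$ on all of $\Theta_0$ from the outset (a finite compactness cover yields $L_n(\Theta_0)\subset V\subset\Theta_0$ on a high-probability event) and then proving the contraction inequality for arbitrary $\theta_1,\theta_2\in\Theta_0$; since $\theta^0\in\Theta_0$ by hypothesis, no separate consistency argument for the first iterate is needed.
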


\section{Applications} \label{section_applications}

In this section, we apply Stein's method of moments to three challenging estimation problems for univariate distributions that have received interest in the literature. Here we establish small sample performance of our asymptotically efficient estimators obtained in Section~\ref{subsection_optimal_function}, and, by choosing suitable test functions, we propose alternatives to moment estimation that are as simple and improve significantly in terms of asymptotic variance.  We conclude the section with an application to a less challenging setting for which censoring can break down performance of ML and other MOM estimates, herewith demonstrating the usefulness of our approach even for regular models.  
 For all examples, we suppose that $\{X_n,n \in \mathbb{Z}\}$ is i.i.d. However, we stress that SMOM can be applied to dependent data, and we give such an application in  Example \ref{cauchy_noniid} of the Supplementary Information.

The applications presented in this section demonstrate the power and universality of Stein's method of moments, as from a single framework we are able to efficiently derive explicit estimators with desirable asymptotic properties that typically outperform competitor methods in our simulation studies. In each example, we compare to the MLE and, where appropriate, the classical moment estimators, as well as more specialist estimators that have been found to perform well for the particular distribution under consideration. It would seem that the minimum Stein discrepancy estimators developed in \cite{barp2019minimum} would be natural competitors as the discrepancy is based on the density approach Stein identity. However, we have excluded them from our simulation studies, as we found that they are outperformed for almost all parameter values in terms of bias and MSE, involve more computational effort and for certain distribution require a numerical procedure even when our Stein estimators are completely explicit; a more detailed justification is given in Section \ref{appa} of the Supplementary Information.

Further examples for the beta, Student's $t$, Lomax, Nakagami, one-sided truncated inverse-gamma distribution and generalised logistic distributions, as well as a non-i.i.d.\ example are given in the Supplementary Information. Some of these estimators also have excellent performance.


\subsection{Truncated normal distribution} \label{sec_truncnormal}
The density of the two-sided truncated Gaussian distribution on $(a,b)$ with $a,b \in \mathbb{R}$, denoted by $TN(\mu,\sigma^2)$, $\theta=(\mu,\sigma^2)$, is given by $p_\theta(x)=C_\theta\phi((x-\mu)/\sigma)$, where $C_\theta^{-1}=\sigma[\Phi(b-\mu)/\sigma)-\Phi(a-\mu)/\sigma)]$
and $\phi$ and $\Phi$ are the standard Gaussian PDF and CDF, respectively. With $\tau_{\theta}(x)=\sigma^2$ we obtain the same Stein operator as in Example \ref{example_gaussian}.
Note that the function class $\mathscr{F}_{\theta}$ differs from the one in the untruncated case.
As we have the same Stein operator as in Example \ref{example_gaussian}, we obtain for two test functions $f_1$, $f_2$ the same expressions for the Stein estimators as in the untrucated case, as given by (\ref{norest}).
Note that the normalising constant drops out and therefore completely explicit and easily computable estimators are retrieved. A natural choice seem to be the polynomials
\begin{align*}
    f_1(x)= -x^2+(a+b)x-ab, \quad
    f_2(x)= x^3-\frac{3}{2}(a+b)x^2+\frac{1}{2}(a^2+4ab+b^2)x-\frac{1}{2}(a^2b+ab^2).
\end{align*}
We denote the Stein estimator based on the latter test functions by $\hat{\theta}_n^{\mathrm{ST}}=(\hat{\mu}_n^{\mathrm{ST}},\hat{\sigma}_n^{\mathrm{ST}})$. 

The first and second moments of the truncated normal distribution take a rather complicated form involving the functions $\phi$ and $\Phi$ (see the Supplementary Information), and consequently the classical moment estimators, which we denote by  $\hat{\mu}_n^{\mathrm{MO}}$ and $\hat{\sigma}_n^{\mathrm{MO}}$, must be obtained numerically. The MLE $\hat{\theta}_n^{\mathrm{ML}}=(\hat{\mu}_n^{\mathrm{ML}},\hat{\sigma}_n^{\mathrm{ML}})$ is also not explicit, and, as for the classical moment estimator, the numerical calculation can be tedious.

A $q$-confidence region of the corresponding asymptotic normal distribution for the above estimation techniques is reported in Figure \ref{fig:truncnormal_variance} for two parameter constellations. The ellipses are plotted with respect to the two eigenvectors $v_1$ and $v_2$ of the covariance matrix and are therefore parallel to the $x$- resp.\, $y$-axis. One can see that the performance of the proposed Stein estimator essentially coincides with that of the MLE, indicating a behaviour close to efficiency. The moment estimator performs poorly, and was hence excluded from our finite sample simulation study.

\begin{figure}[!]
    \centering
\vspace{.2cm}
\begin{subfigure}{\textwidth}
   \begin{subfigure}{.49\textwidth}
\captionsetup{width=.95\textwidth}
  \centering
  \includegraphics[width=7.4cm]{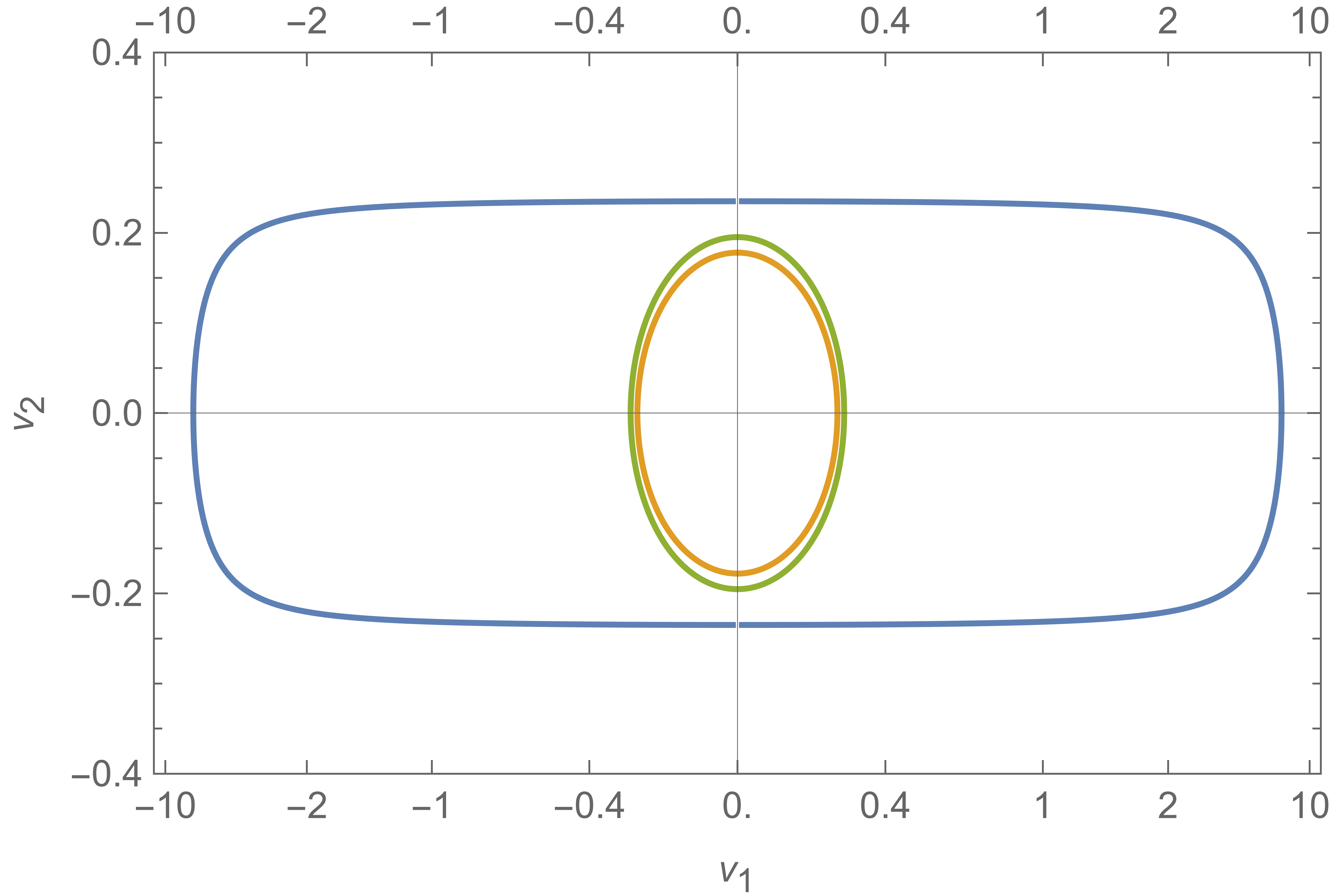}
  \caption{$\mu=0.5$, $\sigma=0.2$}
\end{subfigure}%
  \begin{subfigure}{.49\textwidth}
\captionsetup{width=.95\textwidth}
  \centering
  \includegraphics[width=7.4cm]{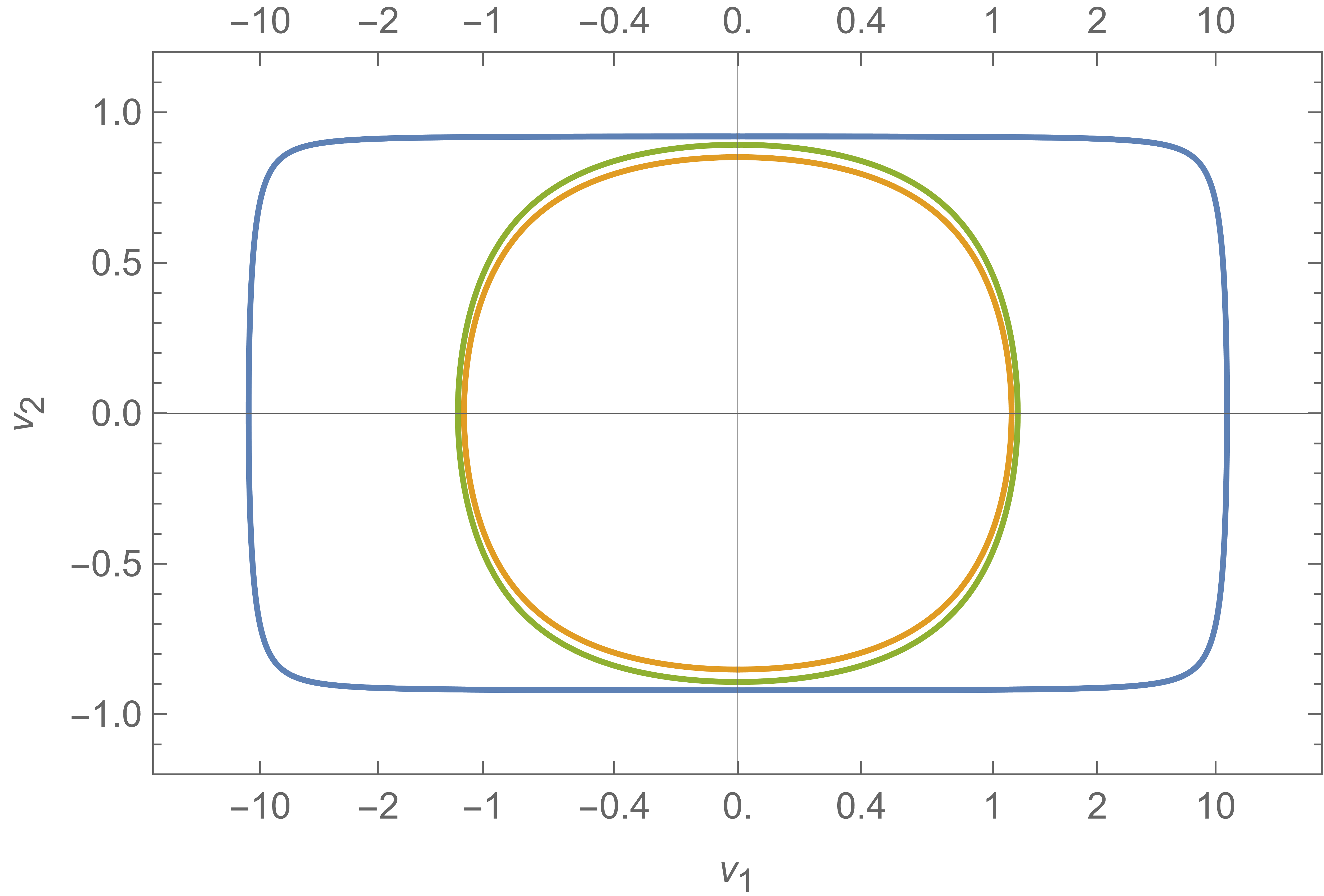}
  \caption{$\mu=0.5$, $\sigma=0.3$}
\end{subfigure}%
\end{subfigure}%
\caption{\protect\label{fig:truncnormal_variance} \it Asymptotic confidence regions for the estimators of the $TN(\mu,\sigma^2)$ distribution for $q=0.95$, $a=0$ and $b=1$ in the directions of the eigenvectors $v_1$ and $v_2$. Plotted are the MLE \includegraphics[scale=1]{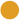}, the moment estimator \includegraphics[scale=1]{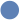} and the Stein estimator \includegraphics[scale=1]{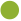}. The $x$-axis scale is transformed via $x \mapsto \arctan x$.}
\end{figure}

\cite{hegde1989estimation} showed that the MLE exists if and only if 
$
    \overline{Y}^2 < \overline{Y^2} < 1 - 2\overline{Y}/x^{\star}$ with $\coth(x^{\star})- 1/x^{\star}=\overline{Y}$,
where $Y_i=2(X_i-a)/(b-a)$.  The conditions for existence of the moment estimator seem to be difficult to work out. It is a known issue for any explicit estimator that it is possible for the estimate to lie outside of the parameter space if the latter is restricted to a certain subset of Euclidean space. This problem also applies to the Stein estimator. We added a column \textit{NE} to the tables to report the estimated relative frequency of cases in which the estimator does not exist (the relative frequency is given as a number between $0$ and $100$). These estimates are based on the same Monte Carlo samples as the estimates for bias and MSE. However, for the considered parameter constellations, existence of the estimator seems to be hardly an issue. Nevertheless, we noticed that in cases where parameter estimation for the $TN(\mu, \sigma^2)$-distribution becomes in general more difficult (for example, when $\mu$ lies outside of the truncation domain and $\sigma^2$ is large), the number of Monte Carlo samples for which the MLE and the Stein estimator does not exist grows rapidly. As can be seen in Table \ref{truncnormal_sim}, the Stein estimator yields better results in terms of bias and MSE for most parameter constellations and both sample sizes considered.

\begingroup
\setlength\tabcolsep{2pt}
\begin{table}[h] \small
\centering
\begin{tabular}{cc|cc|cc|cc|cc|cc|cc}
 $\theta_0$ & & \multicolumn{6}{|c}{n=20} & \multicolumn{6}{|c}{n=50} \\  \hline 
     & & \multicolumn{2}{|c}{Bias} & \multicolumn{2}{|c}{MSE} & \multicolumn{2}{|c}{NE} & \multicolumn{2}{|c}{Bias} & \multicolumn{2}{|c}{MSE} & \multicolumn{2}{|c}{NE} \\  \hline
 & & $\hat{\theta}_n^{\mathrm{ML}}$ & $\hat{\theta}_n^{\mathrm{ST}}$ & $\hat{\theta}_n^{\mathrm{ML}}$ & $\hat{\theta}_n^{\mathrm{ST}}$ & $\hat{\theta}_n^{\mathrm{ML}}$ & $\hat{\theta}_n^{\mathrm{ST}}$ %
 & $\hat{\theta}_n^{\mathrm{ML}}$ & $\hat{\theta}_n^{\mathrm{ST}}$ & $\hat{\theta}_n^{\mathrm{ML}}$ & $\hat{\theta}_n^{\mathrm{ST}}$ & $\hat{\theta}_n^{\mathrm{ML}}$ & $\hat{\theta}_n^{\mathrm{ST}}$\\ \hline
\multirow{2}{*}{$(0.5,0.05)$} & $\mu$  & $4.46\text{e-5}$ &  $\hl 4.3\text{\hl e-5}$ &  $1.25\text{e-4}$ &  $1.25\text{e-4}$ & \multirow{2}{*}{0} & \multirow{2}{*}{0} & $-9.54\text{e-5}$ & \hl $\hl -9.17\text{e-5}$ & \hl $\hl 4.99\text{e-5}$ & $5\text{e-5}$ & \multirow{2}{*}{0} & \multirow{2}{*}{0}
\\ & $\sigma$  & $-1.19\text{e-4}$ & \hl $\hl -1.12\text{e-4}$ & \hl $\hl6.04\text{e-7}$ & $6.1\text{e-7}$ &&& $-5.37\text{e-5}$ & \hl $\hl -5.05\text{e-5}$ & \hl $\hl 2.49\text{e-7}$ & $2.5\text{e-7}$
\\ \hline 
\multirow{2}{*}{$(0.5,0.1)$} & $\mu$  & \hl $\hl 1.54\text{e-5}$ & $4.23\text{e-5}$ & \hl $\hl 5.04\text{e-4}$ & $5.09\text{e-4}$ & \multirow{2}{*}{0} & \multirow{2}{*}{0} & \hl $\hl -7.14\text{e-5}$ & $-7.19\text{e-5}$ & \hl $\hl 1.96\text{e-4}$ & $1.99\text{e-4}$ & \multirow{2}{*}{0} & \multirow{2}{*}{0}
\\ & $\sigma$  & $-4.88\text{e-4}$ & \hl $\hl -3.67\text{e-4}$ & \hl $\hl 9.92\text{e-6}$ & $1.05\text{e-5}$ &&& $-1.79\text{e-4}$ & \hl $\hl -1.21\text{e-4}$ & \hl $\hl 3.94\text{e-6}$ & $4.12\text{e-6}$
\\ \hline 
\multirow{2}{*}{$(0.5,0.2)$} & $\mu$  & \hl $\hl -2.01\text{e-4}$ & $5.86\text{e-4}$ & 0.094 & \hl 0.021 & \multirow{2}{*}{0} & \multirow{2}{*}{0} & $ 3.28\text{e-3}$ & \hl $\hl 3.94\text{e-5}$ & 0.075 & \hl $\hl 9.79\text{e-4}$ & \multirow{2}{*}{0} & \multirow{2}{*}{0}
\\ & $\sigma$  & 0.026 & \hl $\hl 4.58\text{e-3}$ & 4.49 & \hl 0.042 &&& 0.024 & \hl $\hl 4.82\text{e-4}$ & 4.53 & \hl $\hl 1.23\text{e-4}$\\ \hline 
\multirow{2}{*}{$(0.5,0.3)$} & $\mu$  & 0.024 & \hl $\hl -1.41\text{e-3}$ & 1.35 & \hl 1.09 & \multirow{2}{*}{3} & \multirow{2}{*}{3} & $-5.56\text{e-3}$ & \hl $\hl -2.46\text{e-3}$ & 0.383 & \hl 0.025 & \multirow{2}{*}{0} & \multirow{2}{*}{0} 
\\ & $\sigma$  & 0.122 & \hl 0.117 & \hl 3.06 & 9.56 &&& 0.062 & \hl 0.024 & 5.71 & \hl 0.064\\ \hline 
\multirow{2}{*}{$(0.6,0.05)$} & $\mu$  & 0.016 & \hl $\hl -1.59\text{e-4}$ & 0.403 & \hl $\hl 1.25\text{e-4}$ & \multirow{2}{*}{0} & \multirow{2}{*}{0} & $5.25\text{e-3}$ & \hl $\hl -4.95\text{e-5}$ & 0.142 & \hl $\hl 5.07\text{e-5}$ & \multirow{2}{*}{0} & \multirow{2}{*}{0}
\\ & $\sigma$  & 0.015 & \hl $\hl -1.16\text{e-4}$ & 0.365 & \hl $\hl 6.15\text{e-7}$ &&& $4.55\text{e-3}$ & \hl $\hl -5.04\text{e-5}$ & 0.106 & \hl $\hl 2.49\text{e-7}$\\ \hline 
\multirow{2}{*}{$(0.6,0.1)$} & $\mu$  & 0.019 & \hl $\hl 2.35\text{e-4}$ & 0.477 & \hl $\hl 4.98\text{e-4}$ & \multirow{2}{*}{0} & \multirow{2}{*}{0} & 0.023 & \hl $\hl -2.64\text{e-5}$ & 0.584 & \hl $\hl 2.05\text{e-4}$ & \multirow{2}{*}{0} & \multirow{2}{*}{0}
\\ & $\sigma$  & 0.017 & \hl $\hl -4.5\text{e-4}$ & 0.435 & \hl $\hl 1.02\text{e-5}$ &&& 0.021 & \hl $\hl -1.32\text{e-4}$ & 0.515 & \hl $\hl 4.18\text{e-6}$\\ \hline 
\multirow{2}{*}{$(0.6,0.2)$} & $\mu$  & 0.03 & \hl $\hl 4.73\text{e-3}$ & 0.661 & \hl $\hl 3.51\text{e-3}$ & \multirow{2}{*}{0} & \multirow{2}{*}{0} & 0.016 & \hl $\hl 1.14\text{e-3}$ & 0.394 & \hl $\hl 1.11\text{e-3}$ & \multirow{2}{*}{0} & \multirow{2}{*}{0}
\\ & $\sigma$  & 0.045 & \hl $\hl 2.32\text{e-3}$ & 5.05 & \hl $\hl 7.93\text{e-4}$ &&& 0.015 & \hl $\hl 5.9\text{e-4}$ & 0.381 & \hl $\hl 1.37\text{e-4}$ \\ \hline 
\multirow{2}{*}{$(0.7,0.05)$} & $\mu$  &  $-6.32\text{e-5}$ & \hl $\hl -4.6\text{e-5}$ & \hl $\hl 1.2\text{e-4}$ & $1.21\text{e-4}$ & \multirow{2}{*}{0} & \multirow{2}{*}{0} & $2.81\text{e-3}$ & \hl $\hl -3.54\text{e-5}$ & 0.081 & \hl $\hl 5.11\text{e-5}$ & \multirow{2}{*}{0} & \multirow{2}{*}{0}
\\ & $\sigma$  & $-1.29\text{e-4}$ & \hl $\hl -1.21\text{e-4}$ & \hl $\hl 6\text{e-7}$ & $6.21\text{e-7}$&&& $9.96\text{e-4}$ & \hl $\hl -4.95\text{e-5}$ & 0.011 & \hl $\hl 2.57\text{e-7}$\\ \hline 
\multirow{2}{*}{$(0.7,0.1)$} & $\mu$  & $2.43\text{e-3}$ & \hl $\hl -7.98\text{e-5}$ & 0.061 & \hl $\hl 5.4\text{e-4}$ & \multirow{2}{*}{0} & \multirow{2}{*}{0}& $7.91\text{e-4}$ & \hl $\hl 1.19\text{e-4}$ & $4.18\text{e-3}$ & \hl $\hl 2.11\text{e-4}$ & \multirow{2}{*}{0} & \multirow{2}{*}{0}
\\ & $\sigma$  & $5.76\text{e-4}$ & \hl $\hl -3.48\text{e-4}$ & $9.37\text{e-3}$ & \hl $\hl 1.24\text{e-5}$ &&& \hl $\hl 7.57\text{e-5}$ & $-1.6\text{e-4}$ & $6.13\text{e-4}$ & \hl $\hl 4.89\text{e-6}$\\ \hline 
\multirow{2}{*}{$(0.7,0.2)$} & $\mu$  & $0.017$ & $0.017$ & 0.086 & \hl 0.036 & \multirow{2}{*}{0} & \multirow{2}{*}{0}& $4.71\text{e-3}$ & \hl $\hl 3.36\text{e-3}$ & 0.017 & \hl $\hl 1.78\text{e-3}$ & \multirow{2}{*}{0} & \multirow{2}{*}{0}
\\ & $\sigma$  & $6.14\text{e-3}$ & \hl $\hl 5.45\text{e-3}$ & 0.071 & \hl $\hl5.77\text{e-3}$&&& $1.29\text{e-3}$ & \hl $\hl 9.36\text{e-4}$ & $3.91\text{e-3}$ & \hl $\hl 2.04\text{e-4}$\\ \hline 
\end{tabular} 
\caption{\protect\label{truncnormal_sim} Simulation results for the $TN(\mu,\sigma)$ distribution with $a=0,b=1$ for $n\in \{20, 50\}$ and $10,000$ repetitions.}
\end{table}
\endgroup

\subsection{Cauchy distribution} \label{cauchysec}
For $\theta=(\mu, \gamma) \in \mathbb{R} \times (0,\infty)$, the density of the Cauchy distribution is given by $p_\theta(x)=(\pi\gamma)^{-1}(1+((x-\mu)/\gamma)^2)^{-1}$, $x\in\mathbb{R}$.
We fix $\tau_{\theta}=(x-\mu)^2+\gamma^2$, and obtain
$
    \mathcal{A}_{\theta}f(x)=\big( (x-\mu)^2+\gamma^2 \big) f'(x)
$
(see also \cite{schoutens2001orthogonal}). For test functions $f_1$, $f_2$ we obtain the estimators
\begin{align*} 
    \hat{\mu}_n=  \frac{\overline{f_2'(X)} \ \overline{X^2f_1'(X)} - \overline{f_1'(X)} \ \overline{X^2f_2'(X)}}{2\big[\overline{f_2'(X)} \ \overline{Xf_1'(X)} - \overline{f_1'(X)} \ \overline{Xf_2'(X)}\big]},  \:
    \hat{\gamma}_n^2= \frac{\overline{X^2f_2'(X)} \ \overline{Xf_1'(X)} -  \overline{X^2f_1'(X)} \ \overline{Xf_2'(X)} } {\overline{f_1'(X)} \ \overline{Xf_2'(X)} - \overline{f_2'(X)} \ \overline{Xf_1'(X)} } - 
   \hat{\mu}_n^2. 
\end{align*}
The CDF is $P_{\theta}(x)=\pi^{-1} \arctan\big( (x-\mu)/\gamma\big)+1/2$, and we thus obtain simple optimal functions:
\begin{align} \label{optimal_func_cauchy}
    f_{\theta}^{(1)}(x)= - \frac{1}{\gamma^2+(x-\mu)^2}  \quad \text{and} \quad
    f_{\theta}^{(2)}(x)=\frac{\mu - x}{\gamma \big(\gamma^2+(x-\mu)^2\big)}.
\end{align}
With a suitable first step estimate, we have an efficient estimator which is considerably simpler to compute than the MLE, which involves solving polynomial equations of degree $2n-1$ given by
\begin{align*}
    \sum_{i=1}^n \frac{2(X_i- \mu)}{\gamma^2 + (X_i-\mu)^2}=0 \quad \text{and} \quad \frac{n}{\gamma} - \sum_{i=1}^n \frac{2\gamma}{\gamma^2 + (X_i - \mu)^2} =0.
\end{align*}
In \cite{copas1975unimodality,gabrielsen1982unimodality} it is shown that, in the case where both parameters $\mu$ and $\gamma$ are unknown, the likelihood function is unimodal under some regularity assumptions. Clearly, moment estimation is not tractable due to the non-existence of all moments. 

Interestingly, parameter estimation for the Cauchy distribution can be difficult in the case where $\gamma$ is known and one is left with estimation of the location parameter $\mu$. We therefore now focus on the case that $\gamma$ is known. The parameter space thus reduces to $\Theta=\mathbb{R}$ with $\theta=\mu$. This estimation problem has received great attention in the literature; see \cite{zhang2010highly} for an overview of available estimation techniques. The MLE of $\mu$ with known $\gamma$ is often cited as an example of computational failure (\cite{bai1987maximum,zhang2010highly} summarise the challenges) although \cite{bai1987maximum} show that the MLE remains the asymptotically optimal estimator in the Bahadur sense. One reason for this is a multimodal likelihood function (in fact the number of local maxima is asymptotically Poisson distributed with mean $1/\pi$; see \cite{reeds1985asymptotic}). However, closed-form expressions for the MLE exist for sample sizes $3$ and $4$; see \cite{ferguson1978maximum}. Due to the difficulties concerning the MLE, other methods have been developed. In our simulation study we consider the L-estimator methods of \cite{rothenberg1964note}, \cite{bloch1966note}, \cite{chernoff1967asymptotic} and \cite{zhang2010highly}, which we denote by $\hat{\mu}_n^{\mathrm{L1}}$, $\hat{\mu}_n^{\mathrm{L2}}$, $\hat{\mu}_n^{\mathrm{L3}}$ and $\hat{\mu}_n^{\mathrm{L4}}$, respectively. We also consider the Pitman estimator of \cite{freue2007pitman}, which we denote by $\hat{\mu}_n^{\mathrm{PI}}$. The explicit forms of these estimators are given in the Supplementary Information. \cite{zhang2010highly} modified the estimator $\hat{\mu}_n^{\mathrm{L3}}$ in order to also achieve high efficiency for finite sample sizes, and so we do not include the estimator $\hat{\mu}_n^{\mathrm{L3}}$ in our simulation study.

Let us now describe a procedure based on the Stein operator. Note that if we choose one test function (since we only have to estimate $\mu$), the corresponding equation is quadratic and has in general two solutions. This is why we choose two test functions and consider the estimator $\hat{\mu}_n$ with test functions $f_\theta^{(1)}(x)$ and $f_\theta^{(2)}(x)$ as defined in (\ref{optimal_func_cauchy}), whereby $\gamma$ is now considered known. 
We take $\hat{\mu}_n^{\mathrm{L4}}$ as a first-step estimate, 
and denote the resulting estimator by $\hat{\mu}_n^{\mathrm{ST1}}$. Note that $\hat{\mu}_n^{\mathrm{ST1}}$ is not translation-invariant and we therefore consider different values of $\mu$ in our simulation. 

This slight modification of the estimation procedure still results in an asymptotically efficient estimator. We apply Theorem \ref{theorem_consistency_two_step} in the setting where both parameters are estimated (where we take $\tilde{\gamma}_n=\gamma_0$ as the first-step estimator for $\gamma$). Then the asymptotic variance of $\hat{\mu}_n^{\mathrm{ST1}}$ is the top-left element of the inverse Fisher information matrix in the case where $\gamma$ is unknown. The latter is given by the diagonal matrix $I_{\mathrm{ML}}^{-1}(\mu,\gamma)=\mathrm{diag}(2\gamma^2,2\gamma^2)$,
and we conclude that the asymptotic variance for both estimators $\hat{\mu}_n^{\mathrm{ST1}}$ and the (one-dimensional) MLE equals $2\gamma^2$. 

However, when performing the simulations we noticed a very large variance for $\hat{\mu}_n^{\mathrm{ST1}}$ for small sample sizes, which is consistent with the trade-off between small sample size and asymptotic efficiency noticed by \cite{zhang2010highly} for $\hat{\mu}_n^{\mathrm{L3}}$ and $\hat{\mu}_n^{\mathrm{L4}}$. This is why we propose a modified version of $\hat{\mu}_n^{\mathrm{ST1}}$, denoted by $\hat{\mu}_n^{\mathrm{ST2}}$. In a similar manner to the estimator $\hat{\mu}_n^{\mathrm{L1}}$, we cut off the bottom and top $p$-quantile of the sample at hand and calculate the sample means in $\hat{\mu}_n$ and $\hat{\gamma}_n^2$ by the means of the remaining observations. Pursuant to $\hat{\mu}_n^{\mathrm{L1}}$, we choose $p=0.38$ and disregard the first and last $\lfloor np \rfloor$ observations of the sorted sample. Simulation results can be found in Tables \ref{cauchy_sim_n20} and \ref{cauchy_sim_n50}. For the sample size $n=20$, the Pitman estimator $\hat{\mu}_n^{\mathrm{PI}}$ seems to be globally the best, with the modified L-estimator $\hat{\mu}_n^{\mathrm{L4}}$ close behind. The Stein estimator $\hat{\mu}_n^{\mathrm{ST2}}$ delivers good results as well, outperforming $\hat{\mu}_n^{\mathrm{L1}}$ and $\hat{\mu}_n^{\mathrm{L2}}$ for most parameter constellations. For the sample size $n=50$, $\hat{\mu}_n^{\mathrm{L4}}$, $\hat{\mu}_n^{\mathrm{PI}}$ and $\hat{\mu}_n^{\mathrm{ST1}}$ show the best performance regarding the bias and the Stein estimator $\hat{\mu}_n^{\mathrm{ST1}}$ has the lowest MSE for most parameter constellations. Further simulations results for the sample sizes $n=100$ and $n=250$ are given in the Supplementary Information. For these sample sizes, the Stein estimator $\hat{\mu}_n^{\mathrm{ST1}}$ has the lowest MSE for all parameter constellations, and the Pitman estimator performs very poorly. Indeed, our simulations  suggest that $\hat{\mu}^{\mathrm{PI}}$ is not a consistent estimator. 

\begingroup
\setlength\tabcolsep{4pt}
\begin{table} [h]\small
\centering
\begin{tabular}{cc|ccccc|ccccc} 
 $\theta_0$ & & \multicolumn{5}{|c}{Bias} & \multicolumn{5}{|c}{MSE} \\ \hline
 & & $\hat{\mu}_n^{\mathrm{L1}}$ & $\hat{\mu}_n^{\mathrm{L2}}$  & $\hat{\mu}_n^{\mathrm{L4}}$ & $\hat{\mu}_n^{\mathrm{PI}}$ & $\hat{\mu}_n^{\mathrm{ST2}}$ & $\hat{\mu}_n^{\mathrm{L1}}$ & $\hat{\mu}_n^{\mathrm{L2}}$  & $\hat{\mu}_n^{\mathrm{L4}}$ & $\hat{\mu}_n^{\mathrm{PI}}$ & $\hat{\mu}_n^{\mathrm{ST2}}$ \\ \hline
\multirow{1}{*}{$(-5,1)$} & $\mu$  & $-0.725$ & $0.131$ & $-2.54\text{e-3}$ & $ \hl 1.72\text{\hl e-4}$ & $0.085$ & $0.721$ & $0.944$ & $0.136$ & $ \hl 0.116$ & $0.158$ \\ \hline 
\multirow{1}{*}{$(-4,1.5)$} & $\mu$  & $-0.506$ & $0.193$ & $-4.22\text{e-3}$ & $ \hl -3.13\text{\hl e-5}$ & $0.122$ & $0.695$ & $1.04$ & $0.307$ & $ \hl 0.261$ & $0.351$ \\ \hline 
\multirow{1}{*}{$(-2,2)$} & $\mu$  & $-0.099$ & $0.261$ & $8.84\text{e-3}$ & $ \hl 3.88\text{\hl e-3}$ & $0.182$ & $0.782$ & $2.26$ & $0.537$ & $ \hl 0.46$ & $0.627$ \\ \hline 
\multirow{1}{*}{$(0,1)$} & $\mu$  & $0.118$ & $0.117$ & $5.1\text{e-3}$ & $ \hl 3.95\text{\hl e-3}$ & $0.091$ & $0.206$ & $0.296$ & $0.134$ & $ \hl 0.113$ & $0.161$ \\ \hline 
\multirow{1}{*}{$(0,3)$} & $\mu$  & $0.343$ & $0.394$ & $5.14\text{e-3}$ & $ \hl 3.29\text{\hl e-3}$ & $0.273$ & $1.87$ & $4.81$ & $1.22$ & $ \hl 1.02$ & $1.42$ \\ \hline 
\multirow{1}{*}{$(2,0.1)$} & $\mu$  & $0.345$ & $0.013$ & $3.05\text{e-4}$ & $ \hl 2.73\text{\hl e-4}$ & $9.01\text{e-3}$ & $0.121$ & $5.89\text{e-3}$ & $1.36\text{e-3}$ & $ \hl 1.15\text{\hl e-3}$ & $1.59\text{e-3}$ \\ \hline 
\multirow{1}{*}{$(2,0.5)$} & $\mu$  & $0.388$ & $0.065$ & $-1.29\text{e-3}$ & $ \hl -1.22\text{\hl e-3}$ & $0.043$ & $0.2$ & $0.24$ & $0.034$ & $ \hl 0.028$ & $0.039$ \\ \hline 
\multirow{1}{*}{$(4,0.8)$} & $\mu$  & $0.751$ & $0.092$ & $-4.79\text{e-3}$ & $ \hl -4.02\text{\hl e-3}$ & $0.066$ & $0.69$ & $0.325$ & $0.087$ & $ \hl 0.074$ & $0.101$ \\ \hline 
\multirow{1}{*}{$(6,2.3)$} & $\mu$  & $1.26$ & $0.29$ & $7.79\text{e-3}$ & $ \hl 3.64\text{\hl e-3}$ & $0.217$ & $2.59$ & $2.02$ & $0.693$ & $ \hl 0.59$ & $2.92$ \\ \hline 
\multirow{1}{*}{$(10,0.2)$} & $\mu$  & $1.69$ & $0.026$ & $5.34\text{e-4}$ & $ \hl 3.58\text{\hl e-4}$ & $0.018$ & $2.86$ & $0.03$ & $5.62\text{e-3}$ & $ \hl 4.49\text{\hl e-3}$ & $6.78\text{e-3}$ \\ \hline 
\end{tabular} 
\caption{\protect\label{cauchy_sim_n20} Simulation results for the $C(\mu,\gamma)$ distribution for $n=20$ and $10,000$ repetitions.}
\end{table}
\endgroup

\begingroup
\setlength\tabcolsep{3.5pt}
\begin{table}[h] \small
\centering
\begin{tabular}{cc|ccccc|ccccc}
 $\theta_0$ & & \multicolumn{5}{|c}{Bias} & \multicolumn{5}{|c}{MSE} \\ \hline
 & & $\hat{\mu}_n^{\mathrm{L1}}$ & $\hat{\mu}_n^{\mathrm{L2}}$  & $\hat{\mu}_n^{\mathrm{L4}}$ & $\hat{\mu}_n^{\mathrm{PI}}$ & $\hat{\mu}_n^{\mathrm{ST1}}$ & $\hat{\mu}_n^{\mathrm{L1}}$ & $\hat{\mu}_n^{\mathrm{L2}}$  & $\hat{\mu}_n^{\mathrm{L4}}$ & $\hat{\mu}_n^{\mathrm{PI}}$ & $\hat{\mu}_n^{\mathrm{ST1}}$ \\ \hline
\multirow{1}{*}{$(-5,1)$} & $\mu$  & $-0.378$ & 0.012 & $ \hl 1.63\text{\hl e-3}$ & $3.7\text{e-3}$ & $1.71\text{e-3}$ & 0.201 & 0.048 & 0.05 & 0.082 & \hl 0.044\\ \hline 
\multirow{1}{*}{$(-4,1.5)$} & $\mu$  & $-0.276$ & 0.015 & $1.38\text{e-3}$ & $1.5\text{e-3}$ & $ \hl 1.17\text{\hl e-3}$ & 0.206 & 0.105 & 0.11 & \hl 0.099 & \hl 0.099\\ \hline 
\multirow{1}{*}{$(-2,2)$} & $\mu$  & $-0.092$ & 0.025 & $ \hl -7.46\text{\hl e-4}$ & $7.45\text{e-3}$ & $1.33\text{e-3}$ & 0.248 & 0.195 & 0.202 & 0.554 & \hl 0.182\\ \hline 
\multirow{1}{*}{$(0,1)$} & $\mu$  & 0.034 & $7.29\text{e-3}$ & $ \hl -2.69\text{\hl e-3}$ & $3.68\text{e-3}$ & $-3.39\text{e-3}$ & 0.058 & 0.047 & 0.048 & 0.517 & \hl 0.044\\ \hline 
\multirow{1}{*}{$(0,3)$} & $\mu$  & 0.094 & 0.024 & $-0.016$ & $ \hl -9.89\text{\hl e-3}$ & $-0.01$ & 0.527 & 0.42 & 0.437 & \hl 0.384 & 0.39\\ \hline 
\multirow{1}{*}{$(2,0.1)$} & $\mu$  & 0.171 & $1.29\text{e-3}$ & $ \hl 1.9\text{\hl e-4}$ & $4.96\text{e-4}$ & $1.93\text{e-4}$ & 0.03 & $4.79\text{e-4}$ & $4.99\text{e-4}$ & $1.08\text{e-3}$ & $ \hl 4.46\text{\hl e-4}$\\ \hline 
\multirow{1}{*}{$(2,0.5)$} & $\mu$  & 0.186 & $6.37\text{e-3}$ & $ \hl 2.87\text{\hl e-4}$ & $1.14\text{e-3}$ & $1.12\text{e-3}$ & 0.049 & 0.012 & 0.012 & 0.019 & \hl 0.011\\ \hline 
\multirow{1}{*}{$(4,0.8)$} & $\mu$  & 0.364 & $8.99\text{e-3}$ & $4.44\text{e-4}$ & $-2.36\text{e-3}$ & $ \hl -2.75\text{\hl e-4}$ & 0.169 & 0.03 & 0.031 & 0.037 & \hl 0.028\\ \hline 
\multirow{1}{*}{$(6,2.3)$} & $\mu$  & 0.582 & 0.024 & $-3.22\text{e-3}$ & $ \hl -9.5\text{\hl e-4}$ & $-2.31\text{e-3}$ & 0.639 & 0.243 & 0.253 & 0.24 & \hl 0.226\\ \hline 
\multirow{1}{*}{$(10,0.2)$} & $\mu$  & 0.84 & $1.84\text{e-3}$ & $ \hl -4.34\text{\hl e-4}$ & $-2.94\text{e-3}$ & $-2.73\text{e-4}$ & 0.708 & $1.83\text{e-3}$ & $1.87\text{e-3}$ & 0.154 & $ \hl 1.7\text{\hl e-3}$\\ \hline 
\end{tabular} 
\caption{\protect\label{cauchy_sim_n50} Simulation results for the $C(\mu,\gamma)$ distribution for $n=50$ and $10,000$ repetitions.}
\end{table}
\endgroup

\subsection{Exponential polynomial models}
For $\theta=(\theta_1,\ldots,\theta_p) \in \mathbb{R}^{p-1} \times (-\infty,0)$, the density of an exponential polynomial model is given by
$
    p_{\theta}(x)=C_\theta^{-1} \exp(\theta_1 x+ \ldots + \theta_p x^p)$, $x>0$,   
where $C_\theta=\int_0^{\infty} \exp(\theta_1 x+ \ldots + \theta_p x^p)\,dx $ is the normalising constant which cannot be calculated analytically. We choose $\tau_{\theta}(x)=1$ and obtain the Stein operator
\begin{align*}
    \mathcal{A}_{\theta}f(x)=(\theta_1 + 2\theta_2x + \ldots + p\theta_px^{p-1})f(x)+f'(x).
\end{align*}
Here, we need $p$ test functions $f_1,\ldots,f_p$ and the Stein estimator is then given by
\begin{align*}
    \hat{\theta}_n = A^{-1}b,
\end{align*}
where $A$ is a $p\times p$ matrix with $(i,j)$-th entry $j\overline{X^{j-1}f_i(X)} $ and $b=(-\overline{f_1'(X)},\ldots,-\overline{f_p'(X)})^\intercal$.
We propose the test functions $f_1(x)=x,\ldots,f_p(x)=x^p$, and denote the corresponding estimator by $\hat{\theta}_n^{\mathrm{ST1}}$. We also consider $f_i(x)=x^ie^{-ix}$, for $i=0,\ldots,p-1$, and call the respective Stein estimator $\hat{\theta}_n^{\mathrm{ST2}}$. Further, we study the two-step Stein estimator, which we denote by $\hat{\theta}_n^{\mathrm{ST3}}$, whereby we take $\hat{\theta}_n^{\mathrm{ST2}}$ as a first-step estimate. This estimator is consistent and asymptotically efficient.

Let us walk through the estimation methods in the literature. \cite{hayakawa2016estimation} and \cite{nakayama2011holonomic} used the holomorphic gradient method in order to compute the MLE. For exponential polynomial models, the MLE coincides with the moment estimator. \cite{gutmann2012noise} (a refined version of one from \cite{gutmann2010noise}) proposed the noise-contrastive estimator, which we denote by $\hat{\theta}_n^{\mathrm{NC}}$. We consider the score matching approach from \cite{hyvarinen2007some} (a refined version of \cite{hyvarinen2005estimation}) and denote the score matching estimator by $\hat{\theta}_n^{\mathrm{SM}}$. The estimators $\hat{\theta}_n^{\mathrm{NC}}$ and $\hat{\theta}_n^{\mathrm{SM}}$ are computed via numerical optimisation, and the procedure for implementing them is given in the Supplementary Information. It is also natural to consider the  the minimum $\mathscr{L}^q$-estimator obtained from \cite{betsch2021minimum}, which is also motivated by a Stein characterisation; more precisely, by an expectation-based representation of the CDF. The minimum distance estimator is only explicit for a parameter space of dimension less than or equal to 2, and we thus exclude this estimator from our simulation study, since the numerical calculation turns out to be too heavy for a parameter space dimension of $3$ or higher.

In our simulation study, for the MLE, $C_\theta$ is calculated through numerical integration and optimising the log-likelihood function is performed with the Nelder-Mead algorithm. The vector $(-1,\ldots,-1)^\top \in \mathbb{R}^p$ is used as an initial guess for the optimisation procedure. This implementation seems, at least for the parameter constellations we consider, to be computationally manageable and numerically stable. For the noise-contrastive estimator $\hat{\theta}_n^{\mathrm{NC}}$ and the score matching approach $\hat{\theta}_n^{\mathrm{SM}}$, we also used the Nelder-Mead algorithm with initial guess $(-1,\ldots,-1)^\top \in \mathbb{R}^p$. For the two-step Stein estimator $\hat{\theta}_n^{\mathrm{ST3}}$, the normalising constant $C_\theta$ needs to be calculated in order to evaluate the optimal function. This is done through numerical differentiation. 

The results are reported in Tables \ref{expomodels_sim_bias} and \ref{expomodels_sim_mse}. The column \textit{NE} is interpreted as follows. First, the last element of the parameter vector $\theta_p$ has to be negative. Thus, if any estimator returns a positive value for this parameter, we count the estimator as non-existent. Secondly, we restrict the computation time for each estimator to $20$ seconds meaning that an estimator counts equally as non-existent if it requires more time to be calculated or if the numerical procedure fails completely. Concerning $\hat{\theta}_n^{\mathrm{ST3}}$, we also used the parameter vector $(-1,\ldots,-1)^\top \in \mathbb{R}^p$ as a first-step estimate if $\hat{\theta}_n^{\mathrm{ST2}}$ was not available for a Monte Carlo sample. The sample size for this simulation was chosen to be larger than for the other simulation studies, since we are concerned with an estimation problem in which the variance of the estimator becomes typically large as the dimension of the parameter space grows. This makes it difficult to compare estimators for small sample sizes in the case of parameter dimensions of $3$ and $4$. Additionally, the Stein estimators $\hat{\theta}_n^{\mathrm{ST1}}$ and $\hat{\theta}_n^{\mathrm{ST2}}$ often return positive values for $\theta_p$, which makes a comparison even more difficult since the number of samples on which the bias and MSE are based is in truth lower than the number of Monte Carlo repetitions. However, for rather small sample sizes of $n=20$ or $n=50$, we found that our simulation results are reliable for a parameter space dimension of $2$ with similar results as described below, which is why we did not include a separate table for these results. Therefore, we chose the sample size $n=1000$, where we feel comfortable in drawing conclusions out of the study. Overall, we observe a solid performance of the Stein estimators. For example, the explicit Stein estimators $\hat{\theta}_n^{\mathrm{ST1}}$ and $\hat{\theta}_n^{\mathrm{ST2}}$ outperform all other methods for the parameter vector $(-2,0.1,3,-2)^\top$ in terms of bias and MSE. The two-step Stein estimator $\hat{\theta}_n^{\mathrm{ST3}}$ together with $\hat{\theta}_n^{\mathrm{ML}}$ and $\hat{\theta}_n^{\mathrm{NC}}$ seem to be globally the best. Moreover, one observes that $\hat{\theta}_n^{\mathrm{ST3}}$ can often improve in terms of bias and MSE with respect to the first-step estimator $\hat{\theta}_n^{\mathrm{ST2}}$ (although there are some exceptions). In the end, we advise to use $\hat{\theta}_n^{\mathrm{ST3}}$, the MLE or the noise-contrastive estimator $\hat{\theta}_n^{\mathrm{NC}}$, while the explicit Stein estimators can serve as a reliable initial guess, if they exist.

\begin{table}[h] \small
\centering
\begin{tabular}{cc|cccccc}
 $\theta_0$ &  & \multicolumn{6}{|c}{Bias} \\ \hline
 & & $\hat{\theta}_n^{\mathrm{ML}}$ & $\hat{\theta}_n^{\mathrm{NC}}$ & $\hat{\theta}_n^{\mathrm{SM}}$ & $\hat{\theta}_n^{\mathrm{ST1}}$ & $\hat{\theta}_n^{\mathrm{ST2}}$ & $\hat{\theta}_n^{\mathrm{ST3}}$  \\ \hline
\multirow{2}{*}{$(1,-2)$} & $\theta_1$  & $0.02$ & $0.019$ & $0.085$ & $0.03$ & $0.03$ & $\hl 0.017$ \\ & $\theta_2$ & $-0.017$ & $-0.017$ & $-0.055$ & $-0.024$ & $-0.024$ & $\hl -0.015$ \\ \hline 
\multirow{2}{*}{$(-2,-1)$} & $\theta_1$  & $0.025$ & $0.026$ & $0.13$ & $0.035$ & $0.035$ & $ \hl 0.023$ \\ & $\theta_2$ & $-0.031$ & $-0.032$ & $-0.106$ & $-0.04$ & $-0.04$ & $\hl -0.029$ \\ \hline 
\multirow{3}{*}{$(1,2,-3)$} & $\theta_1$  & $-0.025$ & $-0.027$ & $-0.477$ & $-0.122$ & $-0.122$ & $ \hl -0.011$ \\ & $\theta_2$ & $0.073$ & $0.079$ & $0.737$ & $0.244$ & $0.244$ & $ \hl 0.042$ \\ & $\theta_{3}$ & $-0.051$ & $-0.054$ & $-0.34$ & $-0.134$ & $-0.134$ & $ \hl -0.033$ \\ \hline 
\multirow{3}{*}{$(-3,5,-1)$} & $\theta_1$  & $2.6$ & $2.59$ & $-4.83$ & $ \hl -0.078$ & $ \hl -0.078$ & $2.82$ \\ & $\theta_2$ & $-0.92$ & $-0.923$ & $1.62$ & $ \hl 0.048$ & $ \hl 0.048$ & $-0.986$ \\ & $\theta_{3}$ & $0.107$ & $0.107$ & $-0.18$ & $ \hl -7.71\text{\hl e-3}$ & $ \hl -7.71\text{\hl e-3}$ & $0.113$ \\ \hline 
\multirow{3}{*}{$(0.2,-0.8,-2)$} & $\theta_1$  & $ \hl -0.048$ & $-0.054$ & $-0.923$ & $-0.208$ & $-0.189$ & $-0.093$ \\ & $\theta_2$ & $ \hl 0.156$ & $0.176$ & $1.83$ & $0.532$ & $0.491$ & $0.252$ \\ & $\theta_{3}$ & $ \hl -0.127$ & $-0.142$ & $-1.04$ & $-0.359$ & $-0.336$ &  $-0.183$ \\ \hline 
\multirow{3}{*}{$(3,0.5,-0.5)$} & $\theta_1$  & $ \hl -0.018$ & $0.019$ & $-0.332$ & $-0.101$ & $-0.101$ & $0.052$ \\ & $\theta_2$ & $0.026$ & $ \hl -5.63\text{\hl e-4}$ & $0.205$ & $0.075$ & $0.075$ & $-0.026$ \\ & $\theta_{3}$ & $-7.56\text{e-3}$ & $ \hl -1.87\text{\hl e-3}$ & $-0.04$ & $-0.016$ & $-0.016$ & $3.63\text{e-3}$ \\ \hline 
\multirow{3}{*}{$(0.1,2,-3)$} & $\theta_1$  & $-0.036$ & $-0.04$ & $-0.544$ & $-0.136$ & $-0.136$ & $ \hl -0.022$ \\ & $\theta_2$ & $0.102$ & $0.112$ & $0.909$ & $0.296$ & $0.295$ & $ \hl 0.067$ \\ & $\theta_{3}$ & $-0.07$ & $-0.077$ & $-0.445$ & $-0.172$ & $-0.171$ & $ \hl -0.049$ \\ \hline 
\multirow{3}{*}{$(3,0,-4)$} & $\theta_1$  & $-0.018$ & $-0.021$ & $-0.745$ & $-0.164$ & $-0.16$ & $ \hl -5.05\text{\hl e-3}$ \\ & $\theta_2$ & $0.088$ & $0.093$ & $1.38$ & $0.391$ & $0.382$ & $ \hl 0.049$ \\ & $\theta_{3}$ & $-0.082$ & $-0.085$ & $-0.763$ & $-0.258$ & $-0.253$ & $ \hl -0.054$ \\ \hline 
\multirow{4}{*}{$(1,2,0.5,-2)$} & $\theta_1$  & $-0.881$ & $-0.94$ & $3.42$ & $1.16$ & $0.888$ & $ \hl 0.58$ \\ & $\theta_2$ & $2.19$ & $2.34$ & $-7.61$ & $-3.14$ & $-2.43$ & $ \hl -1.65$ \\ & $\theta_{3}$ & $-2.01$ & $-2.16$ & $6.73$ & $3.15$ & $2.48$ & $ \hl 1.71$ \\ & $\theta_{4}$ & $0.614$ & $0.66$ & $-2.06$ & $-1.06$ & $-0.842$ & $ \hl -0.591$ \\ \hline 
\multirow{4}{*}{$(-2,0.1,3,-2)$} & $\theta_1$  & $1.2$ & $1.57$ & $1.9$ & $0.382$ & $0.344$ & $ \hl 0.139$ \\ & $\theta_2$ & $-3.21$ & $-4.2$ & $-4.51$ & $-1.19$ & $-1.08$ & $ \hl -0.43$ \\ & $\theta_{3}$ & $3.04$ & $3.97$ & $4.01$ & $1.26$ & $1.14$ & $ \hl 0.458$ \\ & $\theta_{4}$ & $-0.939$ & $-1.22$ & $-1.19$ & $-0.421$ & $-0.386$ & $ \hl -0.159$ \\ \hline 
\end{tabular}
\caption{\protect\label{expomodels_sim_bias} Simulation results regarding the bias for the exponential polynomial models for $n=1000$ and $10,000$ repetitions.}
\end{table}

\vspace{1cm}

\begingroup
\setlength\tabcolsep{3.5pt}
\begin{table}[h] \small
\centering
\begin{tabular}{cc|cccccc|cccccc}
 $\theta_0$ &  & \multicolumn{6}{|c}{MSE} & \multicolumn{6}{|c}{NE} \\ \hline
 & & $\hat{\theta}_n^{\mathrm{ML}}$ & $\hat{\theta}_n^{\mathrm{NC}}$ & $\hat{\theta}_n^{\mathrm{SM}}$ & $\hat{\theta}_n^{\mathrm{ST1}}$ & $\hat{\theta}_n^{\mathrm{ST2}}$ & $\hat{\theta}_n^{\mathrm{ST3}}$  & $\hat{\theta}_n^{\mathrm{ML}}$ & $\hat{\theta}_n^{\mathrm{NC}}$ & $\hat{\theta}_n^{\mathrm{SM}}$ & $\hat{\theta}_n^{\mathrm{ST1}}$ & $\hat{\theta}_n^{\mathrm{ST2}}$ & $\hat{\theta}_n^{\mathrm{ST3}}$  \\ \hline
\multirow{2}{*}{$(1,-2)$} & $\theta_1$  & $ \hl 0.073$ & $0.079$ & $0.28$ & $0.09$ & $0.09$ & $ \hl 0.073$ & \multirow{2}{*}{$0$} & \multirow{2}{*}{$0$} & \multirow{2}{*}{$0$} & \multirow{2}{*}{$0$} & \multirow{2}{*}{$0$} & \multirow{2}{*}{$0$} \\ & $\theta_2$ & $ \hl 0.039$ & $0.042$ & $0.109$ & $0.047$ & $0.047$ & $ \hl 0.039$ \\ \hline 
\multirow{2}{*}{$(-2,-1)$} & $\theta_1$  & $ \hl 0.084$ & $0.092$ & $0.289$ & $0.09$ & $0.09$ & $ \hl 0.084$ & \multirow{2}{*}{$0$} & \multirow{2}{*}{$0$} & \multirow{2}{*}{$0$} & \multirow{2}{*}{$0$} & \multirow{2}{*}{$0$} & \multirow{2}{*}{$0$} \\ & $\theta_2$ & $0.069$ & $0.074$ & $0.171$ & $0.074$ & $0.074$ & $ \hl 0.068$ \\ \hline 
\multirow{3}{*}{$(1,2,-3)$} & $\theta_1$  & $ \hl 0.687$ & $0.743$ & $4.83$ & $1.15$ & $1.15$ & $ \hl 0.687$ & \multirow{3}{*}{$0$} & \multirow{3}{*}{$0$} & \multirow{3}{*}{$0$} & \multirow{3}{*}{$0$} & \multirow{3}{*}{$0$} & \multirow{3}{*}{$0$} \\ & $\theta_2$ & $ \hl 2.08$ & $2.26$ & $10.7$ & $3.4$ & $3.4$ & $ \hl 2.08$ \\ & $\theta_{3}$ & $0.533$ & $0.576$ & $2.12$ & $0.834$ & $0.834$ & $ \hl 0.532$ \\ \hline 
\multirow{3}{*}{$(-3,5,-1)$} & $\theta_1$  & $6.74$ & $ \hl 6.73$ & $44.8$ & $39.6$ & $39.6$ & $29.4$ & \multirow{3}{*}{$0$} & \multirow{3}{*}{$0$} & \multirow{3}{*}{$0$} & \multirow{3}{*}{$0$} & \multirow{3}{*}{$0$} & \multirow{3}{*}{$0$} \\ & $\theta_2$ & $ \hl 0.873$ & $0.882$ & $5.29$ & $4.99$ & $4.99$ & $3.7$ \\ & $\theta_{3}$ & $ \hl 0.013$ & $ \hl 0.013$ & $0.069$ & $0.068$ & $0.068$ & $0.051$ \\ \hline 
\multirow{3}{*}{$(0.2,-0.8,-2)$} & $\theta_1$  & $0.675$ & $0.725$ & $3.8$ & $0.839$ & $0.871$ & $ \hl 0.62$ & \multirow{3}{*}{$0$} & \multirow{3}{*}{$0$} & \multirow{3}{*}{$9$} & \multirow{3}{*}{$3$} & \multirow{3}{*}{$3$} & \multirow{3}{*}{$1$} \\ & $\theta_2$ & $3.46$ & $3.72$ & $14.8$ & $4.43$ & $4.57$ & $ \hl 3.16$ \\ & $\theta_{3}$ & $1.42$ & $1.51$ & $4.85$ & $1.82$ & $1.86$ & $ \hl 1.31$ \\ \hline 
\multirow{3}{*}{$(3,0.5,-0.5)$} & $\theta_1$  & $0.873$ & $0.769$ & $3.82$ & $1.38$ & $1.38$ & $ \hl 0.67$ & \multirow{3}{*}{$0$} & \multirow{3}{*}{$0$} & \multirow{3}{*}{$0$} & \multirow{3}{*}{$0$} & \multirow{3}{*}{$0$} & \multirow{3}{*}{$0$} \\ & $\theta_2$ & $0.402$ & $0.345$ & $1.35$ & $0.565$ & $0.565$ & $ \hl 0.292$ \\ & $\theta_{3}$ & $0.018$ & $0.015$ & $0.047$ & $0.023$ & $0.023$ & $ \hl 0.013$ \\ \hline 
\multirow{3}{*}{$(0.1,2,-3)$} & $\theta_1$  & $ \hl 0.636$ & $0.681$ & $4.23$ & $0.967$ & $0.968$ & $ \hl 0.636$ & \multirow{3}{*}{$0$} & \multirow{3}{*}{$0$} & \multirow{3}{*}{$1$} & \multirow{3}{*}{$0$} & \multirow{3}{*}{$0$} & \multirow{3}{*}{$0$} \\ & $\theta_2$ & $ \hl 2.31$ & $2.48$ & $11.1$ & $3.48$ & $3.49$ & $ \hl 2.31$ \\ & $\theta_{3}$ & $0.684$ & $0.73$ & $2.53$ & $0.996$ & $0.997$ & $ \hl 0.681$ \\ \hline 
\multirow{3}{*}{$(3,0,-4)$} & $\theta_1$  & $ \hl 1.08$ & $1.17$ & $6.92$ & $1.77$ & $1.78$ & $ \hl 1.08$ & \multirow{3}{*}{$0$} & \multirow{3}{*}{$0$} & \multirow{3}{*}{$2$} & \multirow{3}{*}{$0$} & \multirow{3}{*}{$0$} & \multirow{3}{*}{$0$} \\ & $\theta_2$ & $4.75$ & $5.18$ & $22.4$ & $7.55$ & $7.6$ & $ \hl 4.71$ \\ & $\theta_{3}$ & $1.78$ & $1.94$ & $6.59$ & $2.71$ & $2.73$ & $ \hl 1.77$ \\ \hline 
\multirow{4}{*}{$(1,2,0.5,-2)$} & $\theta_1$  & $1.34$ & $ \hl 1.27$ & $33.2$ & $6.71$ & $7.23$ & $3.31$ & \multirow{4}{*}{$0$} & \multirow{4}{*}{$0$} & \multirow{4}{*}{$16$} & \multirow{4}{*}{$13$} & \multirow{4}{*}{$9$} & \multirow{4}{*}{$1$} \\ & $\theta_2$ & $9.3$ & $ \hl 8.96$ & $161$ & $45.2$ & $48.7$ & $21.6$ \\ & $\theta_{3}$ & $10.2$ & $ \hl 10$ & $128$ & $44.2$ & $47.2$ & $21.6$ \\ & $\theta_{4}$ & $1.29$ & $ \hl 1.28$ & $12.3$ & $4.93$ & $5.22$ & $2.51$ \\ \hline 
\multirow{4}{*}{$(-2,0.1,3,-2)$} & $\theta_1$  & $2.61$ & $3.2$ & $13.8$ & $ \hl 1.94$ & $2.02$ & $2.54$ & \multirow{4}{*}{$0$} & \multirow{4}{*}{$0$} & \multirow{4}{*}{$9$} & \multirow{4}{*}{$2$} & \multirow{4}{*}{$1$} & \multirow{4}{*}{$0$} \\ & $\theta_2$ & $19.9$ & $24.2$ & $78$ & $ \hl 16.4$ & $17.1$ & $21.4$ \\ & $\theta_{3}$ & $18.9$ & $22.9$ & $62$ & $ \hl 17$ & $17.7$ & $22$ \\ & $\theta_{4}$ & $1.9$ & $2.31$ & $5.54$ & $ \hl 1.83$ & $1.9$ & $2.34$ \\ \hline 
\end{tabular} 
\caption{\protect\label{expomodels_sim_mse} Simulation results regarding the MSE and existence for the exponential polynomial models for $n=1000$ and $10,000$ repetitions.}
\end{table}
\endgroup


\subsection{Nakagami distribution for censored data: a real data application}\label{section_discussionnaka} 

The Nakagami distribution, also known as the $m$-distribution  \cite{nakagami1960m}, is a continuous probability distribution on the positive real numbers. Its probability  density function is given by  
\begin{align*}
p_{\theta}(x)=\frac{2m^m}{\Gamma(m)O^m} x^{2m-1} \exp\bigg(-\frac{m}{O} x^2 \bigg), \quad x >0
\end{align*}
where the parameter  $\theta = (m, O)$ has strictly positive  components:  $m$ (the shape parameter)  and $O$ (a scale parameter). This distribution, which is  the distribution of  the square root of a gamma variable,  is designed to  model phenomena characterized by fading and variability and its applications span various fields, including wireless communications, hydrology, mining, medical imaging; see, for example,  \cite{baugci11nakagami, kolar2004estimator, kumar2024q, miyoshi2015downlink, reyes2020nakagami, tegos2021distribution}, among many others.     

 Although this is a regular family with straightforward theoretical properties (and thus MLE is the best all-round estimator), the problem of estimating the parameters of a Nakagami distribution has attracted much attention because of the importance of this distribution for modeling purposes.  
The MLE of the scale $O$ is 
\begin{equation}
    \label{NakaOmegahat}
     \hat{O}_n^{\mathrm{ML}} = \overline{X^2},
\end{equation}
and this cannot be improved upon. The MLE  of  the shape $m$ requires solving the likelihood equations
\begin{equation}
    \label{NakaMLmhat}
     \log  \hat{m}_n^{\mathrm{ML}} - \psi( \hat{m}_n^{\mathrm{ML}}) - \log \hat{O}_n^{\mathrm{ML}}  + 2  \overline{\log X} = 0
\end{equation}
with $\psi(\cdot)$ the digamma function.  See e.g.\ \cite{kolar2004estimator} for a comparative study of various algorithms computing roots of digamma functions numerically;   see \cite{schwartz2013improved} for a bias-corrected version. Starting points for the optimization are given by  the MOM estimators; the MOM estimate for $O$ is the same as  above  but for $m$ can only be computed through a numerical solver and  has high asymptotic variance.  In \cite{artyushenko2019nakagami} the  modified   MOM estimator 
 \begin{equation}
    \label{nakagami_modified_moment_estimators22}
\hat{m}_n^{\mathrm{MO2}}= \frac{{(\overline{X^2}})^2 }{\overline{X^4} - {(\overline{X^2})^2}}, \quad  \hat O_n^{\mathrm{MO2}} = \overline{X^2} 
\end{equation}
is proposed and it is argued through simulations (the  asymptotic properties are not studied) that this  estimator is a more suitable first-step estimator.  More recently  \cite{zhao2021closed} use a generalized Nakagami distribution to derive  another  closed-form moment type estimator 
\begin{align} \label{nakagami_modified_moment_estimators2}
\hat{m}_n^{\mathrm{MO3}}= \frac12 \frac{{\overline{X^2}} }{\overline{X^2 \log(X) } - {\overline{X^2} \, \overline{\log X}}}, \quad  \hat \hat{O}_n^{\mathrm{MO3}} = \overline{X^2}
\end{align}
whose  asymptotic variance is obtained and is showed  to be  very close to that of the MLE.

Setting up our SMOM estimators for Nakagami distributions is simple. A Stein operator is known for the Nakagami  distribution (and is also easy to obtain e.g.\ through the density approach) and  is given by 
\begin{align*}
\mathcal{A}_{\theta}f(x)=2m(O - x^2)f(x) +x O f'(x).
\end{align*}
As the approach  conceals no surprise, we postpone the details of the computations to the Supplementary Material  (Section \ref{subsection_nakagami_distribution}).    In particular one immediately sees that both previous  modified MOM estimators  fall directly within  our general SMOM estimation procedure,  with  \eqref{nakagami_modified_moment_estimators22} obtained through  $f_1(x) = 1$ and $f_2(x) = x$, and   \eqref{nakagami_modified_moment_estimators2} through  $f_1 (x) = 1$ and $f_2(x)  = \log (x)$. Our Theorem \ref{theorem_asymptotic_normality} immediately yields the asymptotic variance of these estimators, confirming   \cite{zhao2021closed} in that particular case.  All expected behaviors are illustrated in our simulation study detailed in Section \ref{subsection_nakagami_distribution} from the Supplementary Material.

Despite their excellent performances, the above $\hat{m}_n^{\mathrm{ML}}$ and $\hat{m}_n^{\mathrm{MO2}}$ estimators  are nevertheless plagued by numerical instability whenever the data contains 0's, as can happen for instance when the data is rounded to the first decimal. This is a classical issue, see e.g.\  \cite{wilks1990maximum}, and such data does occur in many real-life scenarios, as we shall illustrate below. Here the  flexibility of our  Stein-MOM estimators can be exploited to design explicit estimators with low asymptotic variance which are not sensitive to the presence of 0's in the data. Some explorations lead us to propose the compromise  $f_1(x) = 1$ and $f_2(x) = x$ which yields the new estimator
\begin{align} \label{nakagami_stein_estimators}
\hat{m}_n^{\mathrm{ST}}= \frac{1}{2} \frac{\overline{X^2} \ \overline{X}}{\overline{X^3} - \overline{X} \ \overline{X^2}}, \quad \hat{O}_n^{\mathrm{ST}} = \overline{X^2}
\end{align} 
which is explicit, immediate to compute, does not  suffer from the  numerical instability of \eqref{nakagami_modified_moment_estimators2} and has a better variance (since it involves lower moments) than \eqref{nakagami_modified_moment_estimators22}.
 We illustrate the various behaviours in Table~\ref{tab:NA} where all the estimators are applied to the same samples rounded to the first decimal over a variety of problematic parameter ranges (the MLE is started at initial estimates provided by the new estimator $\hat{m}_n^{\mathrm{ST}}$). When the simulation is performed over a small sample e.g. $n=50$, the MLE and $\hat{m}_n^{\mathrm{MO3}}$ estimators have overall good properties ; increasing the  sample size increases the volume of zeros and MLE and $\hat{m}_n^{\mathrm{MO3}}$  incur  heightened bias because of this, hence requiring more sophisticated approaches for these estimators. $\hat{m}_n^{\mathrm{ST}}$ remains good irrespective of the parameter values, range, or sample size.

\begingroup
\setlength\tabcolsep{3.5pt}
\begin{table}[h]
\centering \small
\begin{tabular}{cc|c|cccc|cccc}
 $\theta_0$ & & Sample Size & \multicolumn{4}{c}{Bias} & \multicolumn{4}{|c}{MSE} \\ \hline
 & & $n$ & $\hat{m}_n^{\mathrm{ML}}$ & $\hat{m}_n^{\mathrm{MO2}}$ & $\hat{m}_n^{\mathrm{MO3}}$ & $\hat{m}_n^{\mathrm{ST}}$ & $\hat{m}_n^{\mathrm{ML}}$ & $\hat{m}_n^{\mathrm{MO2}}$ & $\hat{m}_n^{\mathrm{MO3}}$ & $\hat{m}_n^{\mathrm{ST}}$ \\ \hline
 $(0.7,1)$ & $m$ & $50$ & $0.095$ & $0.095$ & $0.088$ & $\hl{0.059}$ & $\hl{0.025}$ & $0.055$ & $0.026$ & $0.032$ \\ 
 & & $500$ & $NaN$ & $0.005$ & $NaN$ & $\hl{0.002}$ & $NaN$ & $0.002$ & $NaN$ & $\hl{0.001}$ \\ \hline
  $(1,0.5)$ & $m$ & $50$ & $0.084$ & $0.109$ & $0.081$ & $\hl{0.07}$ & $\hl{0.044}$ & $0.093$ & $0.047$ & $0.061$ \\ 
 & & $500$ & $0.039$ & $0.002$ & $0.031$ & $\hl{-0.001}$ & $0.003$ & $0.004$ & $0.003$ & $\hl{0.002}$ \\ \hline
 $(1,1)$ & $m$ & $50$ & $0.067$ & $0.108$ & $\hl{0.066}$ & $0.071$ & $\hl{0.041}$ & $0.09$ & $0.043$ & $0.059$ \\ 
 & & $500$ & $0.019$ & $0.005$ & $0.014$ & $\hl{0.001}$ & $0.002$ & $0.004$ & $\hl{0.002}$ & $0.002$ \\ \hline
 $(1,5)$ & $m$ & $50$ & $\hl{0.056}$ & $0.112$ & $0.058$ & $0.074$ & $\hl{0.04}$ & $0.091$ & $0.043$ & $0.06$ \\ 
 & & $500$ & $0.007$ & $0.006$ & $0.006$ & $\hl{0.003}$ & $\hl{0.002}$ & $0.004$ & $0.002$ & $0.002$ \\ \hline
 $(1,10)$ & $m$ & $50$ & $\hl{0.053}$ & $0.111$ & $0.055$ & $0.073$ & $\hl{0.039}$ & $0.092$ & $0.042$ & $0.06$ \\ 
 & & $500$ & $0.005$ & $0.006$ & $0.004$ & $\hl{0.004}$ & $\hl{0.002}$ & $0.004$ & $0.002$ & $0.003$ \\ \hline
 $(5,5)$ & $m$ & $50$ & $\hl{0.265}$ & $0.329$ & $0.268$ & $0.287$ & $\hl{1.233}$ & $1.505$ & $1.247$ & $1.337$ \\ 
 & & $500$ & $-0.004$ & $\hl{0.001}$ & $-0.003$ & $-0.002$ & $\hl{0.047}$ & $0.06$ & $0.047$ & $0.052$ \\ \hline
\end{tabular} 
\caption{\label{tab:NA} Bias and MSE over 10,000 simulations of samples of size $n=50$ and $n=500$, respectively, with the prescribed parameters; each time the samples are rounded to the first decimal.}
\end{table}
\endgroup

To illustrate our approach on a real data set, consider  rainfall data   from 1961-01-01 to 2017-12-31 available from  the national river flow  archive\footnote{https://nrfa.ceh.ac.uk/data/}. It is well-documented that Nakagami distributions provides an excellent fit for such  data sets \cite{baugci11nakagami}. Considering monthly rainfall data (i.e. summing the values over each month throughout the timespan), all parameter estimation methods lead to similar values. The resulting estimates are used to produce the plot in Figure~\ref{fig:histogram-last-ten-years}. If, instead  we extend  to weekly data over the full period, the presence of 0's makes $\hat{m}_n^{\mathrm{MO3}}$ and MLE methods break down, and only $\hat{m}_n^{\mathrm{MO2}}$ and $\hat{m}_n^{\mathrm{ST}}$ provide reasonable estimates. The corresponding histogram and fitted densities are reported in Figure \ref{fig:histogram-last-five-years}. Goodness-of-fit tests demonstrate an excellent fit of the Nakagami distribution to the monthly data, but the fit is less convincing  for the weekly rainfall data. Alternative distributions may be considered, including compound gamma distributions as studied recently in \cite{ebner2024independent} (again  via  a version of Stein's method).

\begin{figure}[ht!]
    \centering
     \begin{subfigure}[b]{0.48\textwidth}
        \centering
        \includegraphics[width=\textwidth]{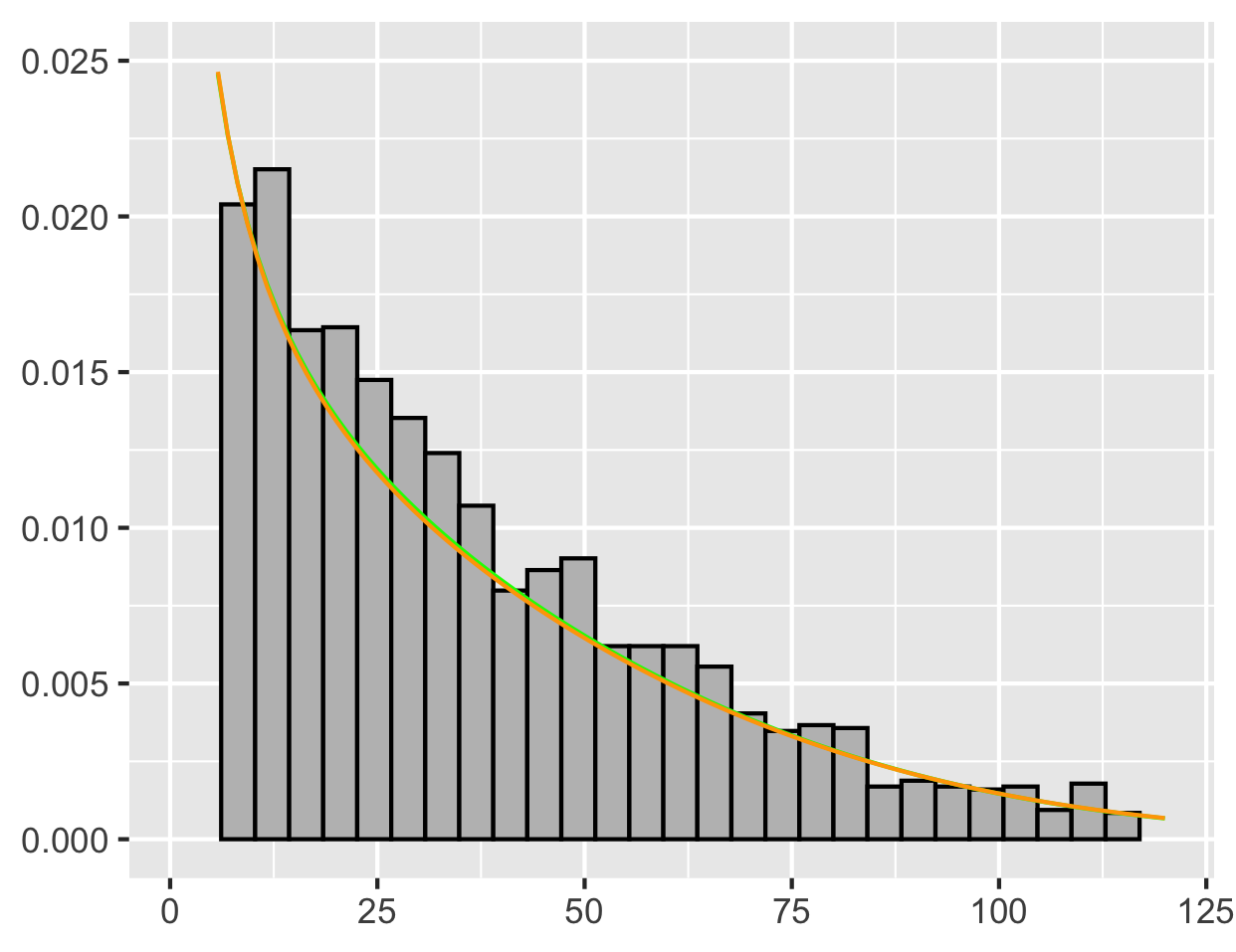}
        \caption{Weekly rainfall data}
        \label{fig:histogram-last-five-years}
    \end{subfigure}
    \begin{subfigure}[b]{0.48\textwidth}
        \centering
        \includegraphics[width=\textwidth]{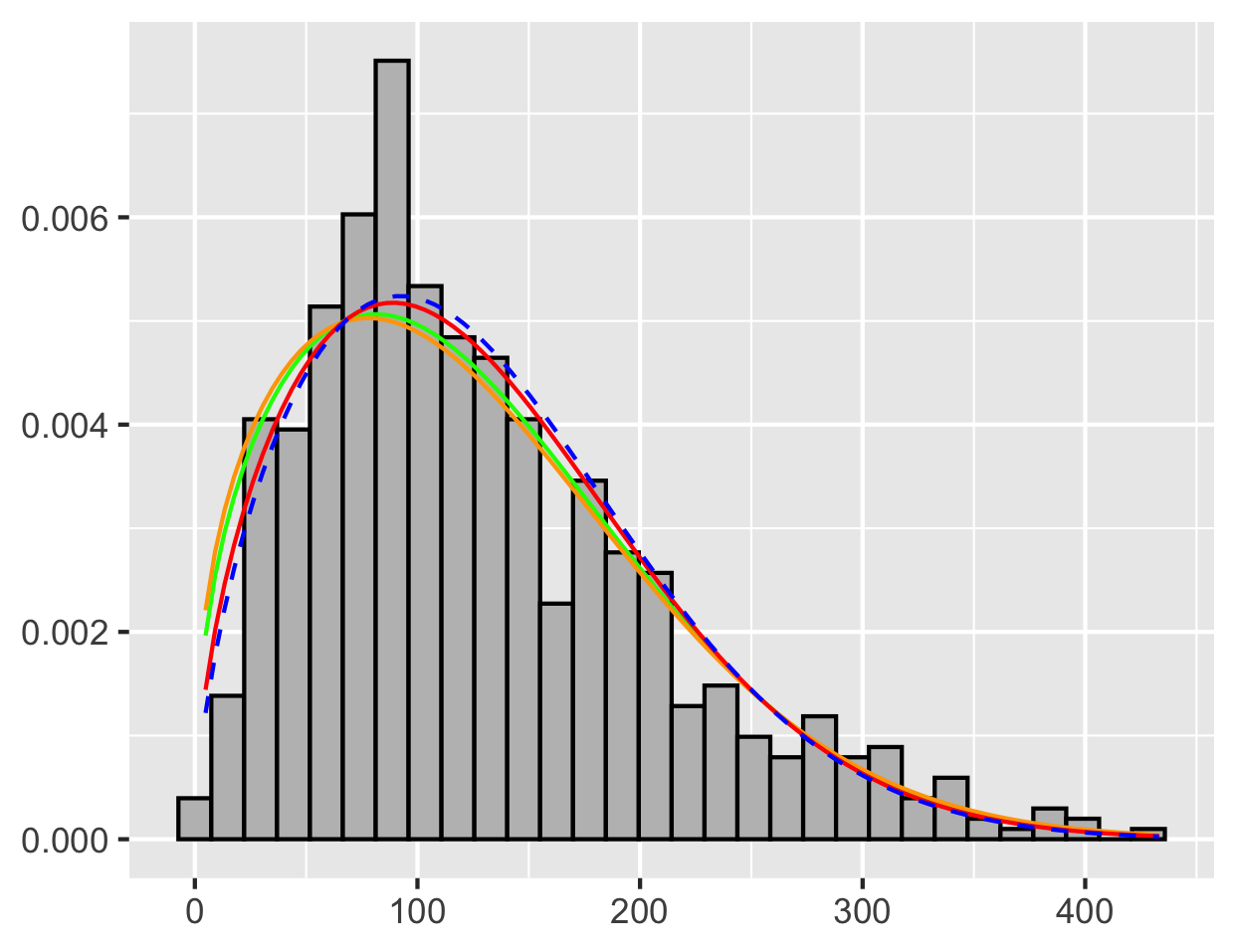}
        \caption{Monthly rainfall data}
        \label{fig:histogram-last-ten-years}
    \end{subfigure}
    \hfill
    \caption{Histograms  from Rhymney at Bargoed station (grid reference ST1559698381) with weekly data (a) and monthly data (b) from  January 1961 to December 2018. The orange curve is the density with parameters estimated through ${\mathrm{MO2}}$, the green curve through ${\mathrm{ST}}$, the blue dashed curve through MLE and the red curve through ${\mathrm{MO3}}$. The last two are only available for the monthly rainfall data because of the presence of 0's in the weekly data. }
    \label{fig:rainfall-histograms}
\end{figure}

It may also be interesting to investigate GOF or CP analysis with  our approach. A more detailed study of such data sets will be the topic of a future publication.  

\section{Discussion}\label{section_discussion} In this paper, we have developed Stein's method of moments in the context of univariate continuous probability distributions which can be characterised in a tractable manner via the density approach. Restricting ourselves to this setting has allowed us to develop a detailed asymptotic theory and analysis of `optimal functions' and to carefully assess performance via simulations; however, many directions for research remain. Whilst we have treated a number of important univariate continuous distributions, our treatment is not comprehensive. We refer the reader to the recent references \cite{wang2023new} and \cite{nw23} for applications to the Lindley, exponential and inverse Gaussian distributions, as well as the discrete negative binomial distribution. In this direction, it would be interesting to develop a general theory for univariate discrete distributions akin to our detailed treatment of the continuous case. The density approach generalises in a natural manner to multivariate continuous distributions (see \cite{mijoule2023stein}), and SMOM has recently been applied in a multivariate setting to truncated multivariate distributions \cite{fgs23truncated} and to the notoriously difficult problem of parameter estimation on the sphere by \cite{fgs23}. Finally, a number of important univariate continuous distributions do not have simple characterisations via the density approach, and characterisations are instead based on higher order differential operators (for example, the variance-gamma distribution (\cite{gaunt14})) or fractional operators (for example, stable distributions (\cite{xu19})). It would therefore be interesting to extend Stein's method of moments beyond the current density method setting.

\section*{Acknowledgements} The second and fifth authors are funded in part by ARC Consolidator grant from ULB and FNRS Grant CDR/OL J.0200.24. The second author is in addition in part funded by EPSRC Grant EP/T018445/1. The third author is funded in part by EPSRC grant EP/Y008650/1 and EPSRC grant UKRI068.

\appendix

\section{Further details for Example \ref{gammaexample5}}\label{appa}

\subsection{(Strong) consistency and asymptotic efficiency}

Let us study the two-step Stein estimator from Example \ref{gammaexample5}. For this purpose, we recall that the CDF of the gamma distribution is given by
\begin{align*}
P_{\theta}(x)=\frac{\gamma(\alpha,\beta x)}{\Gamma(\alpha)},
\end{align*}
where $\gamma(\cdot,\cdot)$ is the lower incomplete gamma function. With this formula at hand, we can calculate the optimal functions, which are given by 
\begin{align*}
& f_{\theta}^{(1)}(x)= \frac{e^{\beta  x}}{(\beta  x)^{\alpha}} \Big( \gamma (\alpha ,x \beta )\big( \log (\beta  x)-\psi(\alpha ) \big) -  \frac{(\beta x)^{\alpha}}{\alpha^2} \, _2F_2(\alpha,\alpha;1+\alpha,1+\alpha;-\beta x) \Big), \\
& f_{\theta}^{(2)}(x)=1/\beta.
\end{align*} We give some details regarding the computation of $f_{\theta}^{(1)}$. First note that
\begin{align*}
  \frac{\partial}{\partial \alpha}  \frac{\gamma(\alpha,\beta x)}{\Gamma(\alpha)} & = \frac{1}{\Gamma(\alpha)} \bigg( \int_0^{\beta x} \log(t) t^{\alpha -1} e^{-t}\,dt - \psi(\alpha) \gamma(\alpha,\beta x) \bigg).
\end{align*}
The integral in the expression above evaluates to
\begin{align*}
& \sum_{k=0}^{\infty} \frac{(-1)^k}{k!} \int_0^{\beta x} \log(t) t^{\alpha+k-1}\, dt \\
  &= \sum_{k=0}^{\infty} \frac{(-1)^k}{k!} \frac{(\beta x)^{\alpha + k}\big((\alpha + k ) \log(\beta x) -1 \big)} {(\alpha+k)^2} \\
&=\log(\beta x) \sum_{k=0}^{\infty} \frac{(-1)^k}{k!} \frac{(\beta x)^{\alpha + k}} {\alpha+k} - \sum_{k=0}^{\infty} \frac{(-1)^k}{k!} \frac{(\beta x)^{\alpha + k}} {(\alpha+k)^2} \\ 
&=\log(\beta x) \int_0^{\beta x} \sum_{k=0}^{\infty} \frac{(-1)^k}{k!} t^{\alpha + k-1}\,dt  - \frac{(\beta x)^{\alpha}}{\alpha^2} \, _2F_2(\alpha,\alpha;1+\alpha,1+\alpha;-\beta x) \\
&= \log(\beta x) \gamma(\alpha,\beta x)  -  \frac{(\beta x)^{\alpha}}{\alpha^2} \, _2F_2(\alpha,\alpha;1+\alpha,1+\alpha;-\beta x).
\end{align*}
We consider a consistent first-step estimator which satisfies Assumption \ref{assumptions_optimal_func}(a) and Assumption (i) of Theorem \ref{theorem_efficiency_optimal_functions} (for example \eqref{gamma_log_estimators}). By studying the asymptotic behaviour of $f_{\theta}^{(1)}(x)$ and using its continuity in $x$, one easily observes that $f_{\theta}^{(1)} \in \mathscr{F}$ and that Assumption (iii) of Theorem \ref{theorem_efficiency_optimal_functions} is satisfied (see \cite[Section 16.11]{olver2010nist} for the asymptotic behaviour of generalised hypergeometric functions of large argument). Regarding Assumption \ref{assumptions_optimal_func}(c), we have $g(\theta)=(\alpha,\beta,1)^\top$ and
\begin{align*}
    M_{\theta}(x)= 
    \begin{pmatrix}
        f_{\theta}^{(1)}(x) & -xf_{\theta}^{(1)}(x) & x(f_{\theta}^{(1)})'(x) \\
        f_{\theta}^{(2)}(x) & -xf_{\theta}^{(2)}(x) & x(f_{\theta}^{(2)})'(x)
    \end{pmatrix}.
\end{align*} 
Therefore, one obtains
\begin{align*}
    M_{\theta}(x)\frac{\partial}{\partial \theta}g(\theta)= 
     \begin{pmatrix}
        f_{\theta}^{(1)}(x) & -xf_{\theta}^{(1)}(x) \\
        f_{\theta}^{(2)}(x) & -xf_{\theta}^{(2)}(x)
    \end{pmatrix}.
\end{align*}
Notice that for $X \sim \mathbb{P}_{\theta_0}$, we have that $\mathbb{E}[\frac{\partial}{\partial \theta} \log p_{\theta}(X) \vert_{\theta_0}]=0$, which is equivalent to 
\begin{align*}
    \begin{pmatrix}
        \mathbb{E}\big[f_{\theta_0}^{(1)}(X)\big] & - \mathbb{E}\big[Xf_{\theta_0}^{(1)}(X)\big] \\
         \mathbb{E}\big[f_{\theta_0}^{(2)}(X)\big] & - \mathbb{E}\big[Xf_{\theta_0}^{(2)}(X)\big]
    \end{pmatrix}
    \begin{pmatrix}
         \alpha_0 \\
         \beta_0
    \end{pmatrix}
    =
    \begin{pmatrix}
         -\mathbb{E}\big[X(f_{\theta_0}^{(1)})'(X)\big] \\
         -\mathbb{E}\big[X\big(f_{\theta_0}^{(2)}\big)'(X)\big]
    \end{pmatrix},
\end{align*}
and implies that $\mathbb{E}[M_{\theta_0}(X)\frac{\partial}{\partial \theta}g(\theta)\vert_{\theta_0}]$ is invertible. Additionally, we notice that the map $\theta \mapsto  M_{\theta}(x)$ is continuously differentiable for all $x>0$. Let us now tackle Assumption \ref{assumptions_optimal_func}(d). Note that it suffices to consider the absolute value of each matrix entry of $M_{\theta}(x)$ resp. $\frac{\partial}{\partial \theta} \mathrm{vec}(M_{\theta}(x))$. We start by looking for a dominating function for $f_{\theta}^{(1)}$ with respect to some neighbourhood of $\theta_0$. We observe $f_{\theta}^{(1)}(x) = O(\log x)$ as $x \rightarrow 0$, and calculate that, for any $\theta \in \Theta$,
\begin{align*}
    \lim_{x \downarrow 0} \frac{f_{\theta}^{(1)}(x)}{\log x} = \frac{1}{\alpha} \quad \text{as well as} \quad \lim_{x \rightarrow \infty} \frac{f_{\theta}^{(1)}(x)}{\log x} = 0.
\end{align*}
One can see from the series expansion of the incomplete lower gamma function
\begin{align*}
    \gamma(\alpha,\beta x)=(\beta x)^{\alpha} \sum_{k=0}^{\infty} \frac{(-1)^k \beta^k}{k!(\alpha + k)}x^k
\end{align*}
that
\begin{align*}
    f_{\theta}^{(1)}(x)=e^{\beta x} \bigg(\big(\log(\beta x) - \psi(\alpha) \big)  \sum_{k=0}^{\infty} \frac{(-1)^k \beta^k}{k!(\alpha + k)}x^k - \sum_{k=0}^{\infty} \frac{(-1)^k \beta^k}{k!(\alpha + k)^2}x^k\bigg),
\end{align*}
whereby the two sums above decrease faster than $\exp(-\beta x)$. Therefore, with the joint continuity of the map $(x,\theta) \mapsto f_{\theta}^{(1)}(x)$ and the compactification $\iota:(0,\infty) \rightarrow (0,1)$, $x \mapsto 1/(1+x^2)$ we see that there is a $K >0$ such that, for any compact neighbourhood $\Theta' \subset \Theta$ of $\theta_0$,
\begin{align*}
    \sup_{x>0} \sup_{\theta \in \Theta'} \bigg\vert  \frac{f_{\theta}^{(1)}(x)}{\log x} \bigg\vert \leq  \sup_{x \in [0,1]} \sup_{\theta \in \Theta'}  \bigg\vert \frac{f_{\theta}^{(1)}(\iota^{-1}(x))}{\log(\iota^{-1}(x))} \bigg\vert \leq K
\end{align*}
(in the expression on the right-hand side we take the limit for $x=0$ or $x=1$). This implies that, for each $x>0$,
\begin{align*}
    \sup_{\theta \in \Theta'} \vert f_{\theta}^{(1)}(x) \vert =  \sup_{\theta \in \Theta'} \vert\log x \vert \bigg\vert \frac{f_{\theta}^{(1)}(x)}{\log x} \bigg\vert \leq \vert \log x \vert K,
\end{align*}
which is integrable with respect to any gamma distribution. The reasoning for all other entries of $M_{\theta}(x)$ is the same, for completeness we give the corresponding functions, derivatives and limits for $f_{\theta}^{(1)}$; the respective arguments for $f_{\theta}^{(2)}$ are trivial. We have
\begin{align*}
\frac{\partial}{\partial \alpha} f_{\theta}^{(1)}(x) &= \frac{e^{\beta  x}}{(\beta  x)^{\alpha}} \Big( 2(\beta x)^{\alpha}/\alpha^3 \, _3F_3(\alpha,\alpha,\alpha;1+\alpha,1+\alpha,1+\alpha;-\beta x)- \psi'(\alpha) \gamma (\alpha ,x \beta ) \\
& -\big( \log (\beta  x)-\psi(\alpha ) \big)   (\beta x)^{\alpha} \, _2F_2(\alpha,\alpha;1+\alpha,1+\alpha;-\beta x)/\alpha^2  \Big), \\
\frac{\partial}{\partial \beta} f_{\theta}^{(1)}(x) &= \frac{1}{\alpha^2 \beta} \Big( 
e^{\beta x} (\alpha - \beta x) \, _2F_2(\alpha,\alpha;1+\alpha,1+\alpha;-\beta x)  \\
& + \alpha^2 (\log(\beta x) - \psi(\alpha)) \big( 1 - e^{\beta x} (\beta x)^{-\alpha} (\alpha- \beta x)  \gamma(\alpha, \beta x)  \big)  \Big), 
\end{align*}
with
\begin{align*}
    \lim_{x \downarrow 0} \frac{\frac{\partial}{\partial \alpha}f_{\theta}^{(1)}(x)}{\log x} = -\frac{1}{\alpha^2}, \quad \lim_{x \rightarrow \infty} \frac{\frac{\partial}{\partial \alpha}f_{\theta}^{(1)}(x)}{\log x} = 0, \quad \lim_{x \downarrow 0} \frac{\partial}{\partial \beta}f_{\theta}^{(1)}(x) = \frac{1}{\alpha\beta}, \quad \lim_{x \rightarrow \infty} \frac{\partial}{\partial \beta}f_{\theta}^{(1)}(x) = 0
\end{align*}
(the limits can be obtained as before by considering the appropriate series expansions; we omit the details). Further, we calculate the derivative with respect to $x$:
\begin{align*}
    \big(f_{\theta}^{(1)}\big)'(x)= & \frac{1}{\alpha^2 x} \Big( 
e^{\beta x} (\alpha - \beta x) \, _2F_2(\alpha,\alpha;1+\alpha,1+\alpha;-\beta x)  \\
& + \alpha^2 (\log(\beta x) - \psi(\alpha)) \big( 1 - e^{\beta x} (\beta x)^{-\alpha} (\alpha- \beta x)  \gamma(\alpha, \beta x)  \big)  \Big) ,
\end{align*}
and have
\begin{align*}
    \lim_{x \downarrow 0} x\big(f_{\theta}^{(1)}\big)'(x) = \frac{1}{\alpha} \quad \text{and} \quad \lim_{x \rightarrow \infty} x\big(f_{\theta}^{(1)}\big)'(x) = 0.
\end{align*}
Moreover, for the derivatives with respect to the parameters we have
\begin{align*}
    x\frac{\partial}{\partial \alpha} \big(f_{\theta}^{(1)}\big)'(x) &= -\psi'(x)+e^{\beta  x}\bigg(\frac{1}{\alpha^3} \Big( \alpha \, _2F_2(\alpha,\alpha;1+\alpha,1+\alpha;-\beta x)  \big(1+ (\alpha -\beta x)\log(\beta x)  \\
    & - (\alpha -\beta x) \psi(\alpha) \big)-2 (\alpha -\beta x) \, _3F_3(\alpha,\alpha,\alpha;1+\alpha,1+\alpha,1+\alpha;-\beta x) \Big) \\
    & + (x\beta)^{-\alpha} \gamma(\alpha, \beta x) \big((\alpha -\beta x) \psi'(\alpha) + \psi(\alpha) - \log(\beta x) \big) \bigg), \\
    x\frac{\partial}{\partial \beta} \big(f_{\theta}^{(1)}\big)'(x) &= \frac{1}{ \beta} \bigg(1- 
    \frac{e^{\beta x}}{\alpha^2} \big( \beta x + (\alpha - \beta x)^2 \big) \, _2F_2(\alpha,\alpha;1+\alpha,1+\alpha;-\beta x) - (\alpha - \beta x) \log(\beta x)   \\
    & + e^{\beta x}(\beta x)^{-\alpha}\big( \beta x + (\alpha - \beta x)^2 \big) \gamma(\alpha, \beta x)  (\log(\beta x) - \psi(\alpha)) +\psi(\alpha) (\alpha- \beta x)  \bigg), 
\end{align*}
with corresponding asymptotic behaviour
\begin{align*}
    \lim_{x \downarrow 0} x\frac{\partial}{\partial \alpha}\big(f_{\theta}^{(1)}\big)'(x) &=- \frac{1}{\alpha^2}, \quad \lim_{x \rightarrow \infty} x\frac{\partial}{\partial \alpha}\big(f_{\theta}^{(1)}\big)'(x) = 0, \\
    \lim_{x \downarrow 0} x\frac{\partial}{\partial \beta}\big(f_{\theta}^{(1)}\big)'(x) &= 0, \quad \lim_{x \rightarrow \infty} x\frac{\partial}{\partial \beta}\big(f_{\theta}^{(1)}\big)'(x) = 0.
\end{align*}
Hence, Assumption \ref{assumptions_optimal_func}(d) holds. Note that we can express the derivative of $f_{\theta}^{(1)}$ in terms of the function itself:
\begin{align} 
    \big(f_{\theta}^{(1)}\big)'(x)&=\frac{p_{\theta}(x)\tau_{\theta}(x) \big(\frac{\partial}{\partial \alpha} p_{\theta}(x) \big) - \big(\frac{\partial}{\partial \alpha} P_{\theta}(x) \big) \big(p_{\theta}(x)\tau_{\theta}(x)\big)'}{ \big(p_{\theta}(x)\tau_{\theta}(x)\big)^2} \nonumber\\
    &=\frac{\frac{\partial}{\partial \alpha} \log p_{\theta}(x)}{\tau(x)} - \big(\log(p_{\theta}(x) \tau_{\theta}(x)) \big)'f_{\theta}^{(1)}(x)  \nonumber\\
\label{deriv_opt_func_gamma}    &=\frac{\log \beta - \psi(\alpha) + \log x}{x} -  \bigg(\frac{\alpha}{x}-\beta \bigg)f_{\theta}^{(1)}(x).
\end{align}
Here, we take $\hat{\theta}_n^{\mathrm{LOG}}$ as a first-step estimate and denote the resulting two-step estimator by $\hat{\theta}_n^{\mathrm{ST}}$. 
Taking into account the results from the previous paragraph, we are able to apply Theorems \ref{theorem_consistency_two_step} and \ref{theorem_efficiency_optimal_functions} and deduce (strong) consistency and asymptotic efficiency of $\hat{\theta}_n^{\mathrm{ST}}$.

\subsection{Simulation study}

In addition to the earlier defined estimators, we note that the MLE $\hat{\theta}_n^{\mathrm{ML}}=(\hat{\alpha}_n^{\mathrm{ML}},\hat{\beta}_n^{\mathrm{ML}})$ is defined through the equations
\begin{align*}   \log\big(\hat{\alpha}_n^{\mathrm{ML}}\big) - \psi\big(\hat{\alpha}_n^{\mathrm{ML}}\big)  = \log\big( \overline{X} \big) - \overline{\log X} \mbox{ and }
    \hat{\beta}_n^{\mathrm{ML}}  = \frac{\hat{\alpha}_n^{\mathrm{ML}}}{\overline{X}},
\end{align*}
where $\psi(\cdot)$ denotes the digamma function. It is not difficult to see that a unique solution to the latter equations exists almost surely. We performed a competitive simulation study implemented in R, in which we included the moment estimator $\hat{\theta}_n^{\mathrm{MO}}$, the MLE $\hat{\theta}_n^{\mathrm{ML}}$, the logarithmic estimator $\hat{\theta}_n^{\mathrm{LOG}}$, as well as the Stein estimator $\hat{\theta}_n^{\mathrm{ST}}$. Concerning the computation of the MLE, we used the Nelder-Mead algorithm implemented in the R function \texttt{optim} to calculate the maximum of the log-likelihood function, while the logarithmic estimator served as an initial guess. The two-step Stein estimator requires an evaluation of the function $f_{\theta}^{(1)}$. For the sake of the latter task, it turns out to be more efficient to calculate the derivative of the CDF with respect to the shape parameter $\alpha$ numerically instead of approximating the (analytical continuation) of the generalised hypergeometric function. We employed the R function \texttt{grad} of the R package \textit{numDeriv} \cite{gilbert2019numDeriv}. Note that through \eqref{deriv_opt_func_gamma}, we are able to avoid the numerically heavy task of evaluating the derivative $\big(f_{\theta}^{(1)}\big)'$. Simulation results for sample sizes $n=20$ and $n=50$ can be found in Tables \ref{gamma_sim_n20} and \ref{gamma_sim_n50}. We compared the estimation procedures in terms of bias and MSE. The logarithmic estimator behaves very similarly to the MLE (as already described in \cite{ye2017closed}) and the Stein estimator shows a performance almost identical to the MLE, which is as expected since the first-step estimator used already gives an estimate close to the true value. \par

We also considered the minimum Stein discrepancy estimators developed in \cite{barp2019minimum} since they seem to be natural competitors as the discrepancy is based on the density approach Stein identity. We chose the estimator obtained through a reproducing kernel Hilbert space with the Gaussian kernel $(s,t) \mapsto \exp\big(-\vert s-t \vert^2/2\big)$ which yields the estimator $\hat{\theta}_n^{\mathrm{MSK}}=(\hat{\alpha}_n^{\mathrm{MSK}},\hat{\beta}_n^{\mathrm{MSK}})$ given through
\begin{align*}
	\hat{\theta}_n^{\mathrm{MSK}}= &\underset{\alpha,\beta>0}{\mathrm{argmin}} \, \frac{1}{n(n-1)} \sum_{i \neq j} \exp\big(-\vert X_i-X_j \vert^2/2\big) \\
	 & \times \Big(X_i^2 (2X_j^2-\alpha) -X_i^3 X_j + \alpha (\alpha -X_j^2 -X_j \beta) + X_i (X_j (1+2\alpha + \beta^2) - X_j^3 - \alpha \beta ) \Big).
\end{align*}
However, a simulation study showed that the latter estimator does not seem to be competitive and the results are therefore not reported in Tables \ref{gamma_sim_n20} and \ref{gamma_sim_n50}. More precisely, $\hat{\theta}_n^{\mathrm{MSK}}$ is outperformed throughout almost all parameter values in terms of bias and MSE by most estimators that were included in the simulation and does not seem to exist in many cases for certain parameter constellations. We also emphasise that $\hat{\theta}_n^{\mathrm{MSK}}$ requires more computational effort given the double sums in the formula above. Moreover, for certain distributions, the minimum Stein discrepancy estimators necessitate a numerical procedure in cases where the Stein estimator is still explicit (e.g.\ for the Nakagami distribution, see Section \ref{subsection_nakagami_distribution}). This is why we decided to not include the minimum Stein discrepancy estimator in the simulation studies in this paper.

\begin{table}[h] \small
\centering
\begin{tabular}{cc|cccc|cccc}
 $\theta_0$ & & \multicolumn{4}{|c}{Bias} & \multicolumn{4}{|c}{MSE} \\ \hline  & & $\hat{\theta}_n^{\mathrm{MO}}$ & $\hat{\theta}_n^{\mathrm{ML}}$ & $\hat{\theta}_n^{\mathrm{LOG}}$ & $\hat{\theta}_n^{\mathrm{ST}}$ & $\hat{\theta}_n^{\mathrm{MO}}$ & $\hat{\theta}_n^{\mathrm{ML}}$ & $\hat{\theta}_n^{\mathrm{LOG}}$ & $\hat{\theta}_n^{\mathrm{ST}}$ \\ \hline
\multirow{2}{*}{$(1,1)$} & $\alpha$  & 0.274 & 0.141 & 0.149 & \hl 0.14 & 0.327 & \hl 0.159 & 0.167 & \hl 0.159\\ & $\beta$ & 0.341 & 0.2 & 0.208 & \hl 0.199 & 0.516 & \hl 0.282 & 0.294 & \hl 0.282\\ \hline 
\multirow{2}{*}{$(0.5,1)$} & $\alpha$  & 0.184 & 0.061 & 0.068 & \hl 0.06 & 0.109 & \hl 0.031 & 0.034 & \hl 0.031\\ & $\beta$ & 0.529 & 0.252 & 0.268 & \hl 0.251 & 1.02 & 0.422 & 0.446 & \hl 0.421\\ \hline 
\multirow{2}{*}{$(1,2)$} & $\alpha$  & 0.279 & 0.147 & 0.154 & \hl 0.146 & 0.321 & \hl 0.164 & 0.169 & \hl 0.164\\ & $\beta$ & 0.697 & 0.415 & 0.432 & \hl 0.414 & 2.03 & \hl 1.14 & 1.18 & \hl 1.14\\ \hline 
\multirow{2}{*}{$(2,1)$} & $\alpha$  & 0.462 & \hl 0.319 & 0.328 & \hl 0.319 & 1.08 & \hl 0.729 & 0.749 & \hl 0.729\\ & $\beta$ & 0.263 & \hl 0.19 & 0.194 & \hl 0.19 & 0.35 & \hl 0.246 & 0.252 & \hl 0.246\\ \hline 
\multirow{2}{*}{$(4,5)$} & $\alpha$  & 0.781 & \hl 0.624 & 0.634 & \hl 0.624 & 3.62 & \hl 2.92 & 2.97 & \hl 2.92\\ & $\beta$ & 1.05 & \hl 0.853 & 0.867 & \hl 0.853 & 6.49 & \hl 5.27 & 5.35 & \hl 5.27\\ \hline 
\multirow{2}{*}{$(3,6)$} & $\alpha$  & 0.624 & 0.481 & 0.488 & \hl 0.48 & 2.19 & \hl 1.69 & 1.72 & \hl 1.69\\ & $\beta$ & 1.38 & \hl 1.08 & 1.1 & \hl 1.08 & 10.4 & \hl 8.15 & 8.27 & \hl 8.15\\ \hline 
\multirow{2}{*}{$(7,0.5)$} & $\alpha$  & 1.32 & \hl 1.17 & 1.18 & \hl 1.17 & 11 & \hl 9.79 & 9.86 & \hl 9.79\\ & $\beta$ & 0.098 & \hl 0.088 & \hl 0.088 & \hl0.088 & 0.06 & \hl 0.054 & \hl 0.054 & \hl 0.054\\ \hline 
\multirow{2}{*}{$(0.2,8)$} & $\alpha$  & 0.121 & \hl 0.021 & 0.025 & \hl 0.021 & 0.035 & $3.96\text{e-3}$ & $4.38\text{e-3}$ & $ \hl 3.94\text{\hl e-3}$\\ & $\beta$ & 9.19 & 3.84 & 4.05 & \hl 3.83 & 299 & 93.6 & 98.6 & \hl 93.4\\ \hline 
\multirow{2}{*}{$(1,9)$} & $\alpha$  & 0.282 & 0.15 & 0.157 & \hl 0.149 & 0.333 & \hl 0.166 & 0.173 & \hl 0.166\\ & $\beta$ & 3.12 & \hl 1.88 & 1.95 & \hl 1.88 & 41.7 & \hl 23.9 & 24.6 & \hl 23.9\\ \hline 
\multirow{2}{*}{$(0.2,0.1)$} & $\alpha$  & 0.123 & 0.021 & 0.025 & \hl 0.02 & 0.036 & $4.01\text{e-3}$ & $4.46\text{e-3}$ & $ \hl 3.99\text{\hl e-3}$\\ & $\beta$ & 0.112 & \hl 0.045 & 0.048 & \hl 0.045 & 0.046 & \hl 0.014 & 0.015 & \hl 0.014\\ \hline 
\end{tabular} 
\caption{\protect\label{gamma_sim_n20} Simulation results for the $\Gamma(\alpha,\beta)$ distribution for $n=20$ and $10,000$ repetitions.}
\end{table}

\begin{table}[h] \small
\centering
\begin{tabular}{cc|cccc|cccc}
 $\theta_0$ & & \multicolumn{4}{|c}{Bias} & \multicolumn{4}{|c}{MSE} \\ \hline  & & $\hat{\theta}_n^{\mathrm{MO}}$ & $\hat{\theta}_n^{\mathrm{ML}}$ & $\hat{\theta}_n^{\mathrm{LOG}}$ & $\hat{\theta}_n^{\mathrm{ST}}$ & $\hat{\theta}_n^{\mathrm{MO}}$ & $\hat{\theta}_n^{\mathrm{ML}}$ & $\hat{\theta}_n^{\mathrm{LOG}}$ & $\hat{\theta}_n^{\mathrm{ST}}$ \\ \hline 
\multirow{2}{*}{$(1,1)$} & $\alpha$  & 0.114 & 0.055 & 0.058 &\hl 0.054 & 0.095 &\hl 0.042 & 0.044 &\hl 0.042\\ & $\beta$ & 0.135 & \hl 0.074 & 0.078 &\hl 0.074 & 0.134 &\hl 0.07 & 0.074 &\hl 0.07\\ \hline 
\multirow{2}{*}{$(0.5,1)$} & $\alpha$  & 0.081 &\hl 0.023 & 0.026 &\hl 0.023 & 0.034 & $8.83\text{e-3}$ & $9.53\text{e-3}$ & $\hl 8.81\text{\hl e-3}$\\ & $\beta$ & 0.212 & 0.091 & 0.098 & \hl 0.09 & 0.23 &\hl 0.097 & 0.101 &\hl 0.097\\ \hline 
\multirow{2}{*}{$(1,2)$} & $\alpha$  & 0.115 & 0.055 & 0.058 &\hl 0.054 & 0.093 &\hl 0.041 & 0.043 &\hl 0.041\\ & $\beta$ & 0.276 &\hl 0.153 & 0.16 &\hl 0.153 & 0.518 & 0.283 & 0.292 &\hl 0.282\\ \hline 
\multirow{2}{*}{$(2,1)$} & $\alpha$  & 0.183 &\hl 0.117 & 0.121 &\hl 0.117 & 0.302 &\hl 0.189 & 0.195 &\hl 0.189\\ & $\beta$ & 0.1 &\hl 0.067 & 0.069 &\hl 0.067 & 0.091 &\hl 0.061 & 0.062 &\hl 0.061\\ \hline 
\multirow{2}{*}{$(4,5)$} & $\alpha$  & 0.306 &\hl 0.249 & 0.25 &\hl 0.249 & 1.07 &\hl 0.831 & 0.846 &\hl 0.831\\ & $\beta$ & 0.406 &\hl 0.335 & 0.336 &\hl 0.335 & 1.88 &\hl 1.49 & 1.52 &\hl 1.49\\ \hline 
\multirow{2}{*}{$(3,6)$} & $\alpha$  & 0.263 &\hl 0.197 & 0.201 &\hl 0.197 & 0.632 &\hl 0.456 & 0.466 &\hl 0.456\\ & $\beta$ & 0.573 & 0.44 & 0.447 &\hl 0.439 & 2.93 &\hl 2.19 & 2.23 &\hl 2.19\\ \hline 
\multirow{2}{*}{$(7,0.5)$} & $\alpha$  & 0.51 &\hl 0.454 &\hl 0.454 &\hl 0.454 & 2.93 &\hl 2.56 & 2.57 &\hl 2.56\\ & $\beta$ & 0.038 &\hl 0.034 &\hl 0.034 &\hl 0.034 & 0.016 &\hl 0.014 &\hl 0.014 &\hl 0.014\\ \hline 
\multirow{2}{*}{$(0.2,8)$} & $\alpha$  & 0.055 & $7.78\text{e-3}$ & $9.56\text{e-3}$ & $\hl 7.69\text{\hl e-3}$ & 0.01 & $\hl 1.18\text{\hl e-3}$ & $1.28\text{e-3}$ & $\hl 1.18\text{\hl e-3}$\\ & $\beta$ & 3.33 &\hl 1.21 & 1.29 &\hl 1.21 & 43.1 &\hl 14.3 & 14.9 &\hl 14.3\\ \hline 
\multirow{2}{*}{$(1,9)$} & $\alpha$  & 0.116 & \hl 0.055 & 0.058 &\hl 0.055 & 0.096 &\hl 0.043 & 0.045 &\hl 0.043\\ & $\beta$ & 1.25 & 0.69 & 0.721 & \hl 0.688 & 10.9 & 5.95 & 6.15 & \hl 5.94\\ \hline 
\multirow{2}{*}{$(0.2,0.1)$} & $\alpha$  & 0.055 & $7.4\text{e-3}$ & $9.08\text{e-3}$ & $\hl 7.3\text{\hl e-3}$ & 0.01 & $\hl 1.12\text{\hl e-3}$ & $1.22\text{e-3}$ & $\hl 1.12\text{\hl e-3}$\\ & $\beta$ & 0.042 &\hl 0.015 & 0.016 &\hl 0.015 & $7.1\text{e-3}$ & $\hl 2.34\text{\hl e-3}$ & $2.44\text{e-3}$ & $\hl 2.34\text{\hl e-3}$\\ \hline 
\end{tabular} 
\caption{\protect\label{gamma_sim_n50} Simulation results for the $\Gamma(\alpha,\beta)$ distribution for $n=50$ and $10,000$ repetitions.}
\end{table}

\section{Further details for Section \ref{section_applications}}

\subsection{Truncated normal distribution}

Here we state the formulas for the first and second moments of the truncated normal distribution, from which moment estimators $\hat{\mu}_n^{\mathrm{MO}}$ and $\hat{\sigma}_n^{\mathrm{MO}}$ are derived. For $X \sim TN(\mu, \sigma^2)$,
\begin{align*}
    \mathbb{E}[X] &= \mu-\sigma \frac{\phi\big( \frac{b-\mu}{\sigma} \big) - \phi\big( \frac{a-\mu}{\sigma} \big) }{\Phi\big( \frac{b-\mu}{\sigma} \big) -\Phi\big( \frac{a-\mu}{\sigma} \big) }, \\
     \mathbb{E}[X^2] &= \mu^2+ \sigma^2 \bigg( 1- \frac{\frac{b-\mu}{\sigma} \phi\big( \frac{b-\mu}{\sigma} \big) - \frac{a-\mu}{\sigma} \phi\big( \frac{a-\mu}{\sigma} \big) }{\Phi\big( \frac{b-\mu}{\sigma} \big) -\Phi\big( \frac{a-\mu}{\sigma} \big) } \bigg)- 2\mu\sigma \frac{\phi\big( \frac{b-\mu}{\sigma} \big) - \phi\big( \frac{a-\mu}{\sigma} \big) }{\Phi\big( \frac{b-\mu}{\sigma} \big) -\Phi\big( \frac{a-\mu}{\sigma} \big) }.
\end{align*}

\subsection{Cauchy distribution}

In this section, we give explicit forms for the estimators $\hat{\mu}_n^{\mathrm{L1}}$, $\hat{\mu}_n^{\mathrm{L2}}$, $\hat{\mu}_n^{\mathrm{L3}}$, $\hat{\mu}_n^{\mathrm{L4}}$ and $\hat{\mu}_n^{\mathrm{PI}}$ from Section \ref{cauchysec}. We also provide some additional simulation results.

Let $X_{(1)},\ldots,X_{(n)}$ be the order statistics of $X_1,\ldots,X_n$. In \cite{rothenberg1964note}, the authors showed that it can be more efficient to use only a part of the observations due to the heavy tails of the Cauchy distribution, and proposed the estimator
\begin{align*}
    \hat{\mu}_n^{\mathrm{L1}}= \frac{1}{n-2r} \sum_{i=r+1}^{n-r} X_{(i)},
\end{align*}
where $r=\lfloor np \rfloor$, $0<p<0.5$, with $p=0.38$ recommended in order to achieve a minimal variance. In \cite{bloch1966note}, the author developed an L-estimator based on only $5$ order statistics, defined by
\begin{align*}
     \hat{\mu}_n^{\mathrm{L2}}=  \sum_{i=1}^{5} \omega_i X_{(\lfloor np_i \rfloor)},
\end{align*}
and found 
\begin{align*}(p_1,p_2,p_3,p_4,p_5)& = (0.13,0.4,0.5,0.6,0.87),\\ 
(\omega_1,\omega_2,\omega_3,\omega_4,\omega_5)&= (-0.052,0.3485,0.407,0.3485,-0.052)
\end{align*}
to be optimal. Another asymptotically efficient estimator was developed in \cite{chernoff1967asymptotic}:
\begin{align*}
     \hat{\mu}_n^{\mathrm{L3}}= \frac{1}{n}  \sum_{i=1}^{n} J\bigg( \frac{i}{n+1} \bigg) X_{(i)},
\end{align*}
where $J(u)=\sin\big(4\pi(u-0.5) \big)/\tan\big(\pi(u-0.5) \big)$. Furthermore, \cite{zhang2010highly} proposed to modify the latter estimator in order to also achieve a high efficiency for finite sample sizes and obtained the estimator $ \hat{\mu}_n^{\mathrm{L4}}= \sum_{i=1}^{n} \omega_i X_{(i)}$, where $\omega_i=c^{-1} \big(\cos \big(\pi\big( \frac{i-0.5}{n}-0.5\big) \big) \big)^{2+\epsilon_n} \cos \big(2\pi\big( \frac{i-0.5}{n}-0.5\big) \big) $, $c\in \mathbb{R}$, are such that $\sum_{i=1}^n \omega_i=1$. The constant $\epsilon_n$ needs to be chosen such that the estimator is efficient for the corresponding sample size $n$. The authors give propositions for certain values of $n$; in fact, $\epsilon_n$ should converge to $0$ as $n$ grows. If we choose $\epsilon_n=0$, we retrieve $\hat{\mu}_n^{\mathrm{L3}}$, which is asymptotically normal and therefore $\hat{\mu}_n^{\mathrm{L4}}$ is asymptotically normal as well. We also consider the Pitman estimator \cite{freue2007pitman}, given by
\begin{align*}
     \hat{\mu}_n^{\mathrm{PI}}= \sum_{i=1}^{n}  \frac{\mathrm{Re}(\omega_i)}{\sum_{j=1}^n \mathrm{Re}(\omega_j)}  X_i,
\end{align*}
where
\begin{align*}
    \omega_k=\prod_{j \neq k} \bigg( \frac{1}{(X_k-X_j)^2+4\gamma^2} \bigg) \bigg(1- \frac{2\gamma}{X_k-X_j}\mathrm{i} \bigg).
\end{align*}
Note that in the latter formula, $\mathrm{i}$ represents the imaginary unit rather than an index. 

Now, we provide additional simulations for the Cauchy distribution $C(\mu,\gamma)$ with the parameter $\gamma$ known for sample sizes $n=100$ and $n=250$.

\begingroup
\setlength\tabcolsep{3.5pt}
\begin{table}[h] \small
\centering
\begin{tabular}{cc|ccccc|ccccc}
 $\theta_0$ & & \multicolumn{5}{|c}{Bias} & \multicolumn{5}{|c}{MSE} \\ \hline
 & & $\hat{\mu}_n^{\mathrm{L1}}$ & $\hat{\mu}_n^{\mathrm{L2}}$  & $\hat{\mu}_n^{\mathrm{L4}}$ & $\hat{\mu}_n^{\mathrm{PI}}$ & $\hat{\mu}_n^{\mathrm{ST1}}$ & $\hat{\mu}_n^{\mathrm{L1}}$ & $\hat{\mu}_n^{\mathrm{L2}}$  & $\hat{\mu}_n^{\mathrm{L4}}$ & $\hat{\mu}_n^{\mathrm{PI}}$ & $\hat{\mu}_n^{\mathrm{ST1}}$ \\ \hline
\multirow{1}{*}{$(-5,1)$} & $\mu$  & $-0.193$ & $-9.97\text{e-3}$ & $-2.39\text{e-3}$ & $-0.215$ & \hl $\hl -2.33\text{e-3}$ & $0.063$ & $0.022$ & $0.023$ & $607$ & \hl $\hl 0.021$ \\ \hline 
\multirow{1}{*}{$(-4,1.5)$} & $\mu$  & $-0.141$ & $-0.011$ & \hl $\hl -2.04\text{e-4}$ & $6.52\text{e-4}$ & $7.74\text{e-4}$ & $0.078$ & $0.05$ & $0.053$ & $23.7$ & \hl $\hl 0.048$ \\ \hline 
\multirow{1}{*}{$(-2,2)$} & $\mu$  & $-0.053$ & $-0.018$ & $-4.75\text{e-3}$ & $-0.057$ & \hl $\hl -2.79\text{e-3}$ & $0.106$ & $0.087$ & $0.094$ & $44.4$ & \hl $\hl 0.082$ \\ \hline 
\multirow{1}{*}{$(0,1)$} & $\mu$  & $0.018$ & $-6.89\text{e-3}$ & \hl $\hl 4.05\text{e-4}$ & $-0.043$ & $9.06\text{e-4}$ & $0.026$ & $0.022$ & $0.023$ & $20.2$ & \hl $\hl 0.021$ \\ \hline 
\multirow{1}{*}{$(0,3)$} & $\mu$  & $0.055$ & $-0.021$ & $2.25\text{e-3}$ & $3.95\text{e-3}$ & \hl $\hl 1.74\text{e-3}$ & $0.233$ & $0.198$ & $0.211$ & $63.6$ & \hl $\hl 0.189$ \\ \hline 
\multirow{1}{*}{$(2,0.1)$} & $\mu$  & $0.085$ & $-6.74\text{e-4}$ & $4.88\text{e-5}$ & $-0.011$ & \hl $\hl 1.05\text{e-4}$ & $7.5\text{e-3}$ & $2.17\text{e-4}$ & $2.31\text{e-4}$ & $33$ & \hl $\hl 2.06\text{e-4}$ \\ \hline 
\multirow{1}{*}{$(2,0.5)$} & $\mu$  & $0.093$ & $-2.86\text{e-3}$ & \hl $\hl 8.8\text{e-4}$ & $-0.024$ & $1.14\text{e-3}$ & $0.015$ & $5.39\text{e-3}$ & $5.89\text{e-3}$ & $10.1$ & \hl $\hl 5.14\text{e-3}$ \\ \hline 
\multirow{1}{*}{$(4,0.8)$} & $\mu$  & $0.18$ & $-6.57\text{e-3}$ & $-8.02\text{e-4}$ & $0.162$ & \hl $\hl -5.24\text{e-4}$ & $0.049$ & $0.014$ & $0.015$ & $155$ & \hl $\hl 0.013$ \\ \hline 
\multirow{1}{*}{$(6,2.3)$} & $\mu$  & $0.29$ & $-0.019$ & \hl $\hl -6.86\text{e-4}$ & $0.165$ & $-7.25\text{e-4}$ & $0.22$ & $0.116$ & $0.125$ & $215$ & \hl $\hl 0.109$ \\ \hline 
\multirow{1}{*}{$(10,0.2)$} & $\mu$  & $0.42$ & $-1.42\text{e-3}$ & \hl $\hl 1.98\text{e-4}$ & $-0.344$ & $2.42\text{e-4}$ & $0.178$ & $9.06\text{e-4}$ & $9.68\text{e-4}$ & $908$ & \hl $\hl 8.53\text{e-4}$ \\ \hline 
\end{tabular} 
\caption{\protect\label{cauchy_sim_n100} Simulation results for the $C(\mu,\gamma)$ distribution for $n=100$ and $10,000$ repetitions.}
\end{table}
\endgroup

\begingroup
\setlength\tabcolsep{3.5pt}
\begin{table}[h] \small
\centering
\begin{tabular}{cc|ccccc|ccccc}
 $\theta_0$ & & \multicolumn{5}{|c}{Bias} & \multicolumn{5}{|c}{MSE} \\ \hline
 & & $\hat{\mu}_n^{\mathrm{L1}}$ & $\hat{\mu}_n^{\mathrm{L2}}$  & $\hat{\mu}_n^{\mathrm{L4}}$ & $\hat{\mu}_n^{\mathrm{PI}}$ & $\hat{\mu}_n^{\mathrm{ST1}}$ & $\hat{\mu}_n^{\mathrm{L1}}$ & $\hat{\mu}_n^{\mathrm{L2}}$  & $\hat{\mu}_n^{\mathrm{L4}}$ & $\hat{\mu}_n^{\mathrm{PI}}$ & $\hat{\mu}_n^{\mathrm{ST1}}$ \\ \hline
\multirow{1}{*}{$(-5,1)$} & $\mu$  & $-0.077$ & $5.75\text{e-4}$ & \hl $\hl -4.1\text{e-4}$ & $0.317$ & $-4.66\text{e-4}$ & $0.016$ & $8.56\text{e-3}$ & $9.29\text{e-3}$ & $1710$ & \hl $\hl 8.13\text{e-3}$ \\ \hline 
\multirow{1}{*}{$(-4,1.5)$} & $\mu$  & $-0.058$ & \hl $\hl 4.46\text{e-5}$ & $-1.71\text{e-3}$ & $0.419$ & $-2.17\text{e-3}$ & $0.024$ & $0.019$ & $0.02$ & $1398$ & \hl $\hl 0.018$ \\ \hline 
\multirow{1}{*}{$(-2,2)$} & $\mu$  & $-0.021$ & $1.73\text{e-3}$ & $-1.18\text{e-3}$ & $-0.131$ & \hl $\hl -4.47\text{e-4}$ & $0.038$ & $0.034$ & $0.036$ & $193$ & \hl $\hl 0.032$ \\ \hline 
\multirow{1}{*}{$(0,1)$} & $\mu$  & $8.51\text{e-3}$ & $2.77\text{e-3}$ & \hl $\hl 1.7\text{e-3}$ & $-0.011$ & $1.83\text{e-3}$ & $9.64\text{e-3}$ & $8.53\text{e-3}$ & $9.19\text{e-3}$ & $35.6$ & \hl $\hl 8.07\text{e-3}$ \\ \hline 
\multirow{1}{*}{$(0,3)$} & $\mu$  & $0.023$ & $6.67\text{e-3}$ & \hl $\hl 2.75\text{e-3}$ & $-0.048$ & $4.35\text{e-3}$ & $0.088$ & $0.077$ & $0.084$ & $0.375$ & \hl $\hl 0.073$ \\ \hline 
\multirow{1}{*}{$(2,0.1)$} & $\mu$  & $0.034$ & $1.35\text{e-4}$ & \hl $\hl -2.92\text{e-6}$ & $\ast$ & $4.97\text{e-5}$ & $1.25\text{e-3}$ & $8.63\text{e-5}$ & $9.28\text{e-5}$ & $\ast$ & \hl $\hl 8.29\text{e-5}$ \\ \hline 
\multirow{1}{*}{$(2,0.5)$} & $\mu$  & $0.037$ & $7.34\text{e-4}$ & $3.19\text{e-4}$ & $0.044$ & \hl $\hl -1.7\text{e-5}$ & $3.77\text{e-3}$ & $2.13\text{e-3}$ & $2.31\text{e-3}$ & $65.6$ & \hl $\hl 2.04\text{e-3}$ \\ \hline 
\multirow{1}{*}{$(4,0.8)$} & $\mu$  & $0.071$ & \hl $-2.14\text{e-4}$ & $-1.51\text{e-3}$ & $0.033$ & $-1.07\text{e-3}$ & $0.011$ & $5.51\text{e-3}$ & $5.91\text{e-3}$ & $34.9$ & \hl $\hl 5.25\text{e-3}$ \\ \hline 
\multirow{1}{*}{$(6,2.3)$} & $\mu$  & $0.115$ & $2.07\text{e-3}$ & \hl $\hl -5.43\text{e-4}$ & $-2.11$ & $-1.31\text{e-3}$ & $0.063$ & $0.045$ & $0.048$ & $4.07\text{e4}$ & \hl $\hl 0.042$ \\ \hline 
\multirow{1}{*}{$(10,0.2)$} & $\mu$  & $0.168$ & $2.04\text{e-4}$ & $-2.75\text{e-5}$ & $0.058$ & \hl $\hl -1.18\text{e-5}$ & $0.029$ & $3.41\text{e-4}$ & $3.68\text{e-4}$ & $535$ & \hl $\hl 3.27\text{e-4}$ \\ \hline 
\end{tabular} 
\caption{\protect\label{cauchy_sim_n250} Simulation results for the $C(\mu,\gamma)$ distribution for $n=250$ and $10,000$ repetitions. Here, $\ast$ stands for \texttt{NaN} type results.}
\end{table}
\endgroup

\subsection{Exponential polynomial models}

In this section, we provide additional details on the competitor estimation procedures. The noise-contrastive estimator developed in \cite{gutmann2012noise} (a refined version of one from \cite{gutmann2010noise}) is defined as follows. We choose the exponential distribution $E(\lambda)$ with parameter $\lambda=1/\overline{X}$ and density $p_{\lambda}(x)=\lambda e^{-\lambda x}$, $x >0$, as the noise distribution and generate an i.i.d.\ sample, denoted by $Y_1,\ldots,Y_d \sim E(\lambda)$. Moreover, we take the tuning parameter $\nu=10$, which gives $d=10n$ for the sample size of the noise distribution. Letting $G_{\theta,\lambda}(x)=\log p_{\theta}(x) - \log p_{\lambda}(x)$ and $r_{\nu}(x)=(1+\nu\exp(-x))^{-1}$, the noise-contrastive estimator maximises the quantity
\begin{align*}
    J(\theta)=\frac{1}{n}\bigg(\sum_{i=1}^n \log r_{\nu}(G_{\theta,\lambda}(X_i)) + \sum_{i=1}^d\log\big(1- r_{\nu}(G_{\theta,\lambda}(Y_i)\big) \bigg).
\end{align*}
We write $\hat{\theta}_n^{\mathrm{NC}}$ for the noise-contrastive estimator. We remark that $\hat{\theta}_n^{\mathrm{NC}}$ is not explicit and needs to be computed via numerical optimisation. Moreover, we consider the score matching approach from \cite{hyvarinen2007some} (a refined version of \cite{hyvarinen2005estimation}). This boils down to finding the minimum of
\begin{align*}
    JJ(\theta)&=\frac{1}{n}\sum_{i=1}^n \bigg( 2X_i \frac{p_{\theta}'(X_i)}{p_{\theta}(X_i)} + \frac{p_{\theta}''(X_i)}{p_{\theta}(X_i)}X_i^2-\frac{1}{2}\frac{p_{\theta}'(X_i)^2}{p_{\theta}(X_i)^2}X_i^2 \bigg) \\
    &=\frac{1}{n}\sum_{i=1}^n \bigg(\sum_{j=1}^p j(j+1)\theta_jX_i^j + \frac{1}{2}  \bigg(\sum_{j=1}^p j\theta_jX_i^j \bigg)^2 \bigg).
\end{align*}
We write $\hat{\theta}_n^{\mathrm{SM}}$ for the score matching estimator. Although it is possible to work out a unique stationary point of the target function for $\hat{\theta}_n^{\mathrm{SM}}$, we will minimise the function numerically. In \cite{hayakawa2016estimation} and \cite{nakayama2011holonomic}, the holomorphic gradient method is used in order to compute the MLE. Note that, for exponential polynomial models, the MLE coincides with the moment estimator. To conclude, we implement the minimum $\mathscr{L}^q$-estimator obtained from \cite{betsch2021minimum}. The latter are also motivated by a Stein characterisation; more precisely, by an expectation-based representation of the CDF. This estimator, which we denote by $\hat{\theta}_n^{\mathrm{MD}}$, is calculated as follows. Let
\begin{align*}
    \eta_n(t,\theta)&=-\frac{1}{n}\sum_{i=1}^n \frac{p_{\theta}'(X_i)}{p_{\theta}(X_i)} \min(t,X_i) - \frac{1}{n}\sum_{i=1}^n 1\{X_i \leq t \} \\
    &=-\frac{1}{n}\sum_{i=1}^n \min(t,X_i) \sum_{j=1}^p j \theta_jx^{j-1} - \frac{1}{n}\sum_{i=1}^n 1\{X_i \leq t \}.
\end{align*}
Then we define $\hat{\theta}_n^{\mathrm{MD}}=\argmin \{\Vert \eta_n(\cdot,\theta) \Vert_{\mathscr{L}^q} \, \vert \, \theta \in \Theta \}$, where $\Vert \cdot \Vert_{\mathscr{L}^q}$, $1 \leq q < \infty$, is the $\mathscr{L}^q$-norm defined for a function $f:\mathbb{R}_{+} \rightarrow \mathbb{R}$ by
\begin{align*}
    \Vert f \Vert_{\mathscr{L}^q}= \bigg( \int_0^\infty \vert f(t) \vert^q w(t) \,dt \bigg)^{1/q},
\end{align*}
for a positive and integrable weight function $w:\mathbb{R}_{+} \rightarrow \mathbb{R}$. The authors recommend $q=2$,  $w(t)=e^{-at}$ and the tuning parameter $a=1$ seems to produce the best results based on their simulations. The minimum-distance estimator $\hat{\theta}_n^{\mathrm{MD}}$ is only explicit for a parameter space of dimension less than or equal to $2$.

\section{Further applications}

In this section, we complement the work of Section \ref{section_applications} with a number of further applications. 
We stress that for all examples, if not explicitly stated differently, we suppose that $\{X_n,n \in \mathbb{Z}\}$ is i.i.d. However, with Example~\ref{cauchy_noniid} we also consider a case of dependent data. In Section~\ref{lomax_section}, we work out the conditions for the existence of the MLE and show that the assumptions from Theorem~\ref{theorem_sequence_stein_estimator} are satisfied. We emphasise that all Stein operators in the next section are obtained with the density approach \eqref{definition_Stein_kernel_operator} and thus Assumption (ii) of Theorem \ref{theorem_efficiency_optimal_functions} is always satisfied.

\subsection{Beta distribution}
The density of the beta distribution $B(\alpha,\beta)$ with parameter $\theta=(\alpha,\beta)$, $\alpha, \beta >0$, is given by
\begin{align*}
p_{\theta}(x)=\frac{1}{B(\alpha,\beta)}x^{\alpha-1}(1-x)^{\beta -1}, \quad 0 < x < 1,
\end{align*}
where $B(\cdot,\cdot)$ is the beta function. We apply the Stein kernel approach and take $\tau_{\theta}(x)=x(1-x)$. With $\mathbb{E}[X]=\alpha/(\alpha+\beta)$ we conclude that a Stein operator is given by
\begin{align*}
\mathcal{A}_{\theta}f(x)=x(1-x)f'(x)+(\alpha - (\alpha+\beta)x )f(x)
\end{align*}
(see also \cite{dobler2015stein,goldstein2013stein}). We choose two test functions $f_1$, $f_2$ and retrieve
\begin{align*}
\begin{cases}
\overline{Xf_1(X)} \beta +\overline{(X-1)f_1(X)}\alpha = \overline{X(1-X)f_1'(X)} \\
\overline{Xf_2(X)} \beta +\overline{(X-1)f_2(X)}\alpha = \overline{X(1-X)f_2'(X)}.
\end{cases}
\end{align*}
This leaves us with the estimators
\begin{align*}
\hat{\alpha}_n&=\frac{\overline{Xf_1(X)} \ \overline{X(1-X)f_2'(X)}-\overline{Xf_2(X)} \ \overline{X(1-X)f_1'(X)}}{\overline{Xf_1(X)} \ \overline{(X-1)f_2(X)}-\overline{Xf_2(X)} \  \overline{(X-1)f_1(X)}}, \\
\hat{\beta}_n&=\frac{\overline{(X-1)f_2(X)} \ \overline{X(1-X)f_1'(X)}-\overline{(X-1)f_1(X)} \ \overline{X(1-X)f_2'(X)}}{\overline{Xf_1(X)} \ \overline{(X-1)f_2(X)}-\overline{Xf_2(X)} \  \overline{(X-1)f_1(X)}}.
\end{align*}
By choosing $f_1(x)=1$ and $f_2(x)=x$ we obtain the moment estimators
\begin{align*}
\hat{\alpha}_n^{\mathrm{MO}}=\frac{\overline{X}(\overline{X}-\overline{X^2})}{\overline{X^2}-\overline{X}^2} \quad \text{and} \quad 
\hat{\beta}_n^{\mathrm{MO}}=\frac{(1-\overline{X})(\overline{X}-\overline{X^2})}{\overline{X^2}-\overline{X}^2}.
\end{align*}
Since the values of the $X_i$ are always between $0$ and $1$, the moment estimators return positive values with probability $1$ and therefore exist almost surely. In addition, by choosing $f_1(x)=1$ and $f_2(x)=\log(x/(1-x))$, we recover the explicit estimators
\begin{align*}
\hat{\alpha}_n^{\mathrm{LOG}}&=\frac{\overline{X}}{\overline{X\log(X/(1-X))}-\overline{X} \ \overline{\log(X/(1-X))}}, \\
\hat{\beta}_n^{\mathrm{LOG}}&=\frac{(1-\overline{X})}{\overline{X\log(X/(1-X))}-\overline{X} \ \overline{\log(X/(1-X))}} 
\end{align*}
proposed in \cite{papadatos2022point}, which show a behaviour close to asymptotic efficiency. Note that the latter estimates are always positive due to a multivariate version of Jensen's inequality and the convexity of the function $(x,y) \mapsto xy$. We remark that the estimator of \cite{papadatos2022point} was also obtained through a version of Stein's method based on a covariance identity. The chosen test functions were motivated by the logarithmic estimator for the gamma distribution \eqref{gamma_log_estimators}. \par
We futher consider the MLE $\hat{\theta}_n^{\mathrm{ML}}=(\hat{\alpha}_n^{\mathrm{ML}},\hat{\beta}_n^{\mathrm{ML}})$ defined as the solution to the following system of equations:
\begin{gather*}
    \psi\big( \hat{\alpha}_n^{\mathrm{ML}}\big) - \psi\big( \hat{\alpha}_n^{\mathrm{ML}}+\hat{\beta}_n^{\mathrm{ML}}\big) = \overline{\log X}, \\
    \psi\big( \hat{\beta}_n^{\mathrm{ML}}\big) - \psi\big( \hat{\alpha}_n^{\mathrm{ML}}+\hat{\beta}_n^{\mathrm{ML}}\big) = \overline{\log(1- X)}.
\end{gather*}
It is easy to see that the likelihood function of the $B(\alpha,\beta)$ distribution is strictly convex and therefore has a unique maximum characterised by the equations above. However, for some parameter constellations, the MLE can be difficult to compute due to the return of non-finite values for the likelihood function. As in Section \ref{appa}, we study the two-step Stein estimator. The CDF of the Beta distribution is given through
\begin{align*}
P_{\theta}(x)=\frac{B_x(\alpha,\beta)}{B(\alpha,\beta)},
\end{align*}
where $B_x$ is the incomplete beta function. We recover
\begin{align*}
f_{\theta}^{(1)}(x)&=x^{-\alpha } (1-x)^{-\beta } \bigg(B_x(\alpha ,\beta ) (\psi (\alpha +\beta )-\psi (\alpha )+\log (x)) \\
&\quad- \frac{x^{\alpha } \, _3F_2(\alpha ,\alpha ,1-\beta ;\alpha +1,\alpha +1;x)}{\alpha
   ^2}\bigg),
\end{align*}
and
\begin{align*}
f_{\theta}^{(2)}(x)&=x^{-\alpha } \bigg(\frac{\, _3F_2(1-\alpha ,\beta ,\beta ;\beta +1,\beta +1;1-x)}{\beta ^2} \\
&\quad- (1-x)^{-\beta } B_{1-x}(\beta ,\alpha ) (\psi(\alpha +\beta )-\psi(\beta )+\log
   (1-x))\bigg).
\end{align*}
In this case,
\begin{align*}
    M_{\theta}(x)= 
    \begin{pmatrix}
        (x-1)f_{\theta}^{(1)}(x)  & xf_{\theta}^{(1)}(x) & x(x-1)(f_{\theta}^{(1)})'(x) \\
         (x-1)f_{\theta}^{(2)}(x) & xf_{\theta}^{(2)}(x) & x(x-1)(f_{\theta}^{(2)})'(x)
    \end{pmatrix}
\end{align*}
and $g(\theta)=(\alpha,\beta,1)^\top$. The conditions for Theorems \ref{theorem_consistency_two_step} and \ref{theorem_efficiency_optimal_functions} can be verified in a similar way as in the previous section, so that we can conclude (strong) consistency and asymptotic efficiency for a suitable first-step estimator which we choose to be the logarithmic estimator $\hat{\theta}_n^{\mathrm{LOG}}$. The resulting two-step estimator will be denoted by $\hat{\theta}_n^{\mathrm{ST}}$. As in Section \ref{appa}, we are able to give a formula for the derivatives of the optimal functions. We obtain
\begin{align*}
    \big(f_{\theta}^{(1)}\big)'(x)=\frac{\psi(\alpha + \beta) - \psi(\alpha) + \log x}{x(1-x)}-\bigg(\frac{\alpha}{x}-\frac{\beta}{1-x} \bigg)f_{\theta}^{(1)}(x)
\end{align*}
and
\begin{align*}
    \big(f_{\theta}^{(2)}\big)'(x)=\frac{\psi(\alpha + \beta) - \psi(\beta) + \log(1-x)}{x(1-x)}-\bigg(\frac{\alpha}{x}-\frac{\beta}{1-x} \bigg)f_{\theta}^{(2)}(x).
\end{align*}
The results of a competitive simulation study can be found in Tables \ref{beta_sim_n20} and \ref{beta_sim_n50}. The MLE is calculated equivalently to the MLE of the gamma distribution with the logarithmic estimator as initial guess. The simulation results are akin to the results for the gamma distribution with the logarithmic estimator, the two-step Stein estimator and the MLE showing a similar behaviour. However, the moment estimator seems to yield good results regarding the bias, whereby it is outperformed in terms of the MSE for most parameter values.

\begin{table}[h] \small
\centering
\begin{tabular}{cc|cccc|cccc}
 $\theta_0$ & & \multicolumn{4}{|c}{Bias} & \multicolumn{4}{|c}{MSE} \\ \hline  & & $\hat{\theta}_n^{\mathrm{MO}}$ & $\hat{\theta}_n^{\mathrm{ML}}$ & $\hat{\theta}_n^{\mathrm{LOG}}$ & $\hat{\theta}_n^{\mathrm{ST}}$ & $\hat{\theta}_n^{\mathrm{MO}}$ & $\hat{\theta}_n^{\mathrm{ML}}$ & $\hat{\theta}_n^{\mathrm{LOG}}$ & $\hat{\theta}_n^{\mathrm{ST}}$ \\ \hline
\multirow{2}{*}{$(1,1)$} & $\alpha$  & \hl 0.131 & 0.155 & 0.145 & 0.156 & 0.194 & 0.182 & \hl 0.18 & 0.183\\ & $\beta$ & \hl 0.129 & 0.152 & 0.143 & 0.153 & 0.191 & 0.178 & \hl 0.176 & 0.178\\ \hline 
\multirow{2}{*}{$(2,1)$} & $\alpha$  & 0.329 & 0.346 & \hl 0.328 & 0.347 & 0.949 & 0.85 & \hl 0.838 & 0.851\\ & $\beta$ & 0.148 & 0.149 & \hl 0.145 & 0.15 & 0.193 & \hl 0.165 & \hl 0.165 & 0.166\\ \hline 
\multirow{2}{*}{$(0.2,0.5)$} & $\alpha$  & 0.024 & \hl 0.021 & \hl 0.021 & \hl 0.021 & $9.9\text{e-3}$ & $ \hl 4.44\text{\hl e-3}$ & $4.66\text{e-3}$ & $4.47\text{e-3}$\\ & $\beta$ & 0.097 & 0.104 & \hl 0.095 & 0.104 & 0.184 & \hl 0.091 & \hl 0.091 & \hl 0.091\\ \hline 
\multirow{2}{*}{$(0.5,5)$} & $\alpha$  & 0.128 & \hl 0.06 & 0.064 & \hl 0.06 & 0.084 & 0.032 & 0.033 & \hl 0.031\\ & $\beta$ & 1.81 & \hl 1.11 & \hl 1.11 & \hl 1.11 & 17.7 & 8.88 & 9.16 & \hl 8.87\\ \hline 
\multirow{2}{*}{$(3,0.4)$} & $\alpha$  & 1.09 & 0.729 & \hl 0.709 & 0.729 & 7.11 & \hl 3.62 & 3.66 & \hl 3.62\\ & $\beta$ & 0.095 & \hl 0.048 & 0.05 & \hl 0.048 & 0.052 & \hl 0.02 & 0.021 & \hl 0.02\\ \hline 
\multirow{2}{*}{$(5,7)$} & $\alpha$  & \hl 0.809 & 0.85 & 0.834 & 0.851 & \hl 4.93 & 4.96 & \hl 4.93 & 4.96\\ & $\beta$ & \hl 1.12 & 1.18 & 1.15 & 1.18 & \hl 9.81 & 9.88 & \hl 9.81 & 9.88\\ \hline 
\multirow{2}{*}{$(8,4)$} & $\alpha$  & \hl 1.37 & 1.4 & 1.38 & 1.4 & 14.1 & 13.8 & \hl 13.7 & 13.8\\ & $\beta$ & \hl 0.646 & 0.661 & 0.649 & 0.661 & 3.2 & 3.14 & \hl 3.12 & 3.14\\ \hline 
\multirow{2}{*}{$(0.9,0.3)$} & $\alpha$  & 0.191 & 0.192 & \hl 0.175 & 0.192 & 0.379 & 0.269 & \hl 0.262 & 0.269\\ & $\beta$ & 0.042 & \hl 0.034 & \hl 0.034 & 0.035 & 0.021 & \hl 0.011 & \hl 0.011 & \hl 0.011\\ \hline 
\multirow{2}{*}{$(6,5)$} & $\alpha$  & \hl 0.952 & 1.01 & 0.989 & 1.01 & \hl 7.4 & 7.46 & 7.42 & 7.46\\ & $\beta$ & \hl 0.792 & 0.84 & 0.822 & 0.84 & \hl 5.02 & 5.06 & 5.03 & 5.06\\ \hline 
\multirow{2}{*}{$(4,4)$} & $\alpha$  & \hl 0.62 & 0.668 & 0.65 & 0.668 & \hl 3.08 & 3.11 & 3.09 & 3.11\\ & $\beta$ & \hl 0.63 & 0.678 & 0.66 & 0.679 & \hl 3.19 & 3.22 & 3.2 & 3.22\\ \hline 
\end{tabular} 
\caption{\protect\label{beta_sim_n20} Simulation results for the $B(\alpha,\beta)$ distribution for $n=20$ and $10,000$ repetitions.}
\end{table}

\begin{table}[h] \small
\centering
\begin{tabular}{cc|cccc|cccc}
 $\theta_0$ & & \multicolumn{4}{|c}{Bias} & \multicolumn{4}{|c}{MSE} \\ \hline  & & $\hat{\theta}_n^{\mathrm{MO}}$ & $\hat{\theta}_n^{\mathrm{ML}}$ & $\hat{\theta}_n^{\mathrm{LOG}}$ & $\hat{\theta}_n^{\mathrm{ST}}$ & $\hat{\theta}_n^{\mathrm{MO}}$ & $\hat{\theta}_n^{\mathrm{ML}}$ & $\hat{\theta}_n^{\mathrm{LOG}}$ & $\hat{\theta}_n^{\mathrm{ST}}$ \\ \hline
\multirow{2}{*}{$(1,1)$} & $\alpha$  & \hl 0.046 & 0.054 & 0.051 & 0.055 & 0.051 & \hl 0.045 & \hl 0.045 & \hl 0.045\\ & $\beta$ & \hl 0.046 & 0.055 & 0.051 & 0.056 & 0.052 & \hl 0.045 & \hl 0.045 & \hl 0.045\\ \hline 
\multirow{2}{*}{$(2,1)$} & $\alpha$  & \hl 0.116 & 0.126 & 0.119 & 0.126 & 0.257 & \hl 0.227 & \hl 0.227 & \hl 0.227\\ & $\beta$ & \hl 0.051 & 0.053 & \hl 0.051 & 0.053 & 0.055 & \hl 0.044 & 0.045 & \hl 0.044\\ \hline 
\multirow{2}{*}{$(0.2,0.5)$} & $\alpha$  & $9.1\text{e-3}$ & $8.11\text{e-3}$ & $ \hl 8.02\text{\hl e-3}$ & $8.34\text{e-3}$ & $3.08\text{e-3}$ & $ \hl 1.25\text{\hl e-3}$ & $1.32\text{e-3}$ & $ \hl 1.25\text{\hl e-3}$\\ & $\beta$ & \hl 0.03 & 0.035 & 0.032 & 0.035 & 0.024 & \hl 0.017 & 0.018 & \hl 0.017\\ \hline 
\multirow{2}{*}{$(0.5,5)$} & $\alpha$  & 0.049 & \hl 0.023 & 0.024 & \hl 0.023 & 0.023 & $8.66\text{e-3}$ & $9.17\text{e-3}$ & $ \hl 8.65\text{\hl e-3}$\\ & $\beta$ & 0.66 & 0.422 & \hl 0.418 & 0.421 & 3.81 & \hl 2.15 & 2.18 & \hl 2.15\\ \hline 
\multirow{2}{*}{$(3,0.4)$} & $\alpha$  & 0.363 & 0.253 & \hl 0.243 & 0.253 & 1.42 & \hl 0.822 & 0.829 & \hl 0.822\\ & $\beta$ & 0.035 & \hl 0.018 & \hl 0.018 & \hl 0.018 & 0.015 & $ \hl 5.26\text{\hl e-3}$ & $5.57\text{e-3}$ & $ \hl 5.26\text{\hl e-3}$\\ \hline 
\multirow{2}{*}{$(5,7)$} & $\alpha$  & \hl 0.277 & 0.293 & 0.287 & 0.293 & 1.24 & \hl 1.23 & \hl 1.23 & \hl 1.23\\ & $\beta$ & \hl 0.398 & 0.421 & 0.412 & 0.421 & 2.53 & \hl 2.5 & \hl 2.5 & \hl 2.5\\ \hline 
\multirow{2}{*}{$(8,4)$} & $\alpha$  & \hl 0.466 & 0.484 & 0.472 & 0.484 & 3.49 & 3.37 & \hl 3.36 & 3.37\\ & $\beta$ & \hl 0.226 & 0.234 & 0.229 & 0.235 & 0.825 & \hl 0.789 & \hl 0.789 & \hl 0.789\\ \hline 
\multirow{2}{*}{$(0.9,0.3)$} & $\alpha$  & 0.067 & 0.069 & \hl 0.063 & 0.069 & 0.081 & \hl 0.059 & 0.06 & \hl 0.059\\ & $\beta$ & 0.016 & \hl 0.013 & \hl 0.013 & \hl 0.013 & $6.44\text{e-3}$ & $ \hl 2.95\text{\hl e-3}$ & $3.11\text{e-3}$ & $2.96\text{e-3}$\\ \hline 
\multirow{2}{*}{$(6,5)$} & $\alpha$  & \hl 0.336 & 0.358 & 0.35 & 0.358 & 1.86 & \hl 1.85 & \hl 1.85 & \hl 1.85\\ & $\beta$ & \hl 0.278 & 0.296 & 0.289 & 0.296 & 1.29 & 1.28 & \hl 1.27 & 1.28\\ \hline 
\multirow{2}{*}{$(4,4)$} & $\alpha$  & \hl 0.24 & 0.26 & 0.252 & 0.261 & 0.858 & 0.852 & \hl 0.85 & 0.852\\ & $\beta$ & \hl 0.238 & 0.259 & 0.251 & 0.259 & 0.857 & 0.852 & \hl 0.849 & 0.853\\ \hline 
\end{tabular} 
\caption{\protect\label{beta_sim_n50} Simulation results for the $B(\alpha,\beta)$ distribution for $n=50$ and $10,000$ repetitions.}
\end{table}

\subsection{Student's $t$-distribution}
The density of Student's $t$-distribution $t_{\mu}$ with $\theta=\mu >0$ is given by
\begin{align*}
p_{\theta}(x)=\frac{1}{\sqrt{\mu}B(\mu/2,1/2)}\bigg(\frac{\mu}{\mu+x^2} \bigg)^{(\mu+1)/2}, \quad x \in \mathbb{R}.
\end{align*}
The Stein kernel is given by $\tau_{\theta}(x)=x^2+\mu$ and we have $\mathbb{E}[X]=0$ whereby we remark that here, $\mathbb{E}[X^k]$ for $X\sim t_\mu$ only exists if $k<\mu$. Hence, a Stein operator is given by
\begin{align*}
\mathcal{A}_{\theta}f(x)=(x^2+\mu) f'(x) - (\mu-1)x f(x)
\end{align*}
(see also \cite{schoutens2001orthogonal}). 
We choose a test function $f_1$ which results in the equation
\begin{align*}
(\overline{Xf_1(X)}-\overline{f_1'(X)})\mu  = \overline{X^2f_1'(X)}+\overline{Xf_1(X)}, 
\end{align*}
through which we obtain the estimator
\begin{align*}
\hat{\mu}_n&=\frac{\overline{X^2f_1'(X)}+\overline{Xf_1(X)}}{ \overline{Xf_1(X)}-\overline{f_1'(X)}}.
\end{align*}
Supposing the necessary assumptions hold for the test function $f_1$, we can apply Theorem \ref{theorem_asymptotic_normality} and calculate the asymptotic variance of $\hat{\mu}_n$. It is given by
\begin{align*}
\frac{ \mathbb{E}\big[ \big(\frac{x^2+\mu}{\mu-1}f_1'(x) - x f_1(x)\big)^2\big]}{\mathbb{E} \big[\frac{-x^2-1}{(\mu-1)^2}f_1'(X_1)\big]^2}.
\end{align*}
By choosing $f_1(x)=x$ we obtain the moment-type estimator
\begin{align} \label{t_moment_estimator}
\hat{\mu}_n^{\mathrm{MO}}&=\frac{2\overline{X^2}}{\overline{X^2}-1},
\end{align}
which is only consistent for $\mu>2$. Our objective here is to propose a completely explicit alternative to the moment estimator that exists also for small values of $\mu$. We recommend to use the test function $f_1(x)=x/(\kappa +x^2)$ with a tuning parameter $\kappa \in \mathbb{R}$, which gives
\begin{align} \label{t_stein_estimators}
\hat{m}_n^{\mathrm{ST}}(\kappa) = \frac{\sum_{i=1}^n \Big( \frac{\kappa X_i^2 - X_i^4}{(\kappa+X_i^2)^2} + \frac{X_i^2}{\kappa+X_i^2} \Big) } {\sum_{i=1}^n \Big( \frac{X_i^2 }{\kappa+X_i^2} + \frac{\kappa-X_i^2}{(\kappa+X_i^2)^2} \Big) }.
\end{align}
In Figure \ref{fig:studentt_variance}, the asymptotic variances of the estimators $\hat{m}_n^{\mathrm{MO}}$, $\hat{m}_n^{\mathrm{ST1}}$ and the MLE $\hat{m}_n^{\mathrm{ML}}$ (which is defined as the parameter value for $\mu$ that maximises the log-likelihood function for a given sample) are plotted. Student's $t$-distribution is a regular probability distribution such that the MLE is asymptotically efficient. 
We observe that with $\hat{m}_n^{\mathrm{ST}}$ and $\kappa=10$, we achieve a performance which is close to efficiency for values $0<\mu<4$. However, for large degree of freedoms, larger values for $\kappa$ seem to be more suitable. As it can be seen in Figure \ref{fig:studentt_variance}, the Stein estimator $\hat{m}_n^{\mathrm{ST}}$ will eventually be outperformed by the moment estimator \eqref{t_moment_estimator} as $\mu$ grows. Here, we forgo a simulation study for small sample performance, which is due to the difficulty of estimating the parameter $\mu$ for large degrees of freedoms. As it is widely known, Student's $t$-distribution converges to the standard normal distribution as $\mu \rightarrow \infty$ and therefore, the densities merely differ for large values of $\mu$. This can also be seen by the large asymptotic variance of all estimators in the right image of Figure \ref{fig:studentt_variance}. As a consequence, the finite sample variance of any estimator is very large and makes it difficult to draw concrete conclusions about the performance of estimators. We also mention the possibility to implement the asymptotically efficient two-step Stein estimator. The function $f_{\theta}^{(1)}(x)$ is complicated since the CDF of Student's $t$-distribution involves the generalised hypergeometric function, which is why we do not give the formula here explicitly. However, we can express the derivative of the latter in terms of the function itself. We obtain
\begin{align*}
    \big(f_{\theta}^{(1)}\big)'(x)&=\frac{x^2-1+(x^2+\mu)\Big(\psi\big(\frac{\mu}{2}+\frac{1}{2}\big)+\gamma+\log\big(\frac{\mu}{x^2+\mu}\big)\Big)-(x^2+\mu)\Big(\psi\big(\frac{\mu}{2}\big)+\gamma\Big)}{2(x^2+\mu)^2} \\
    &\quad- \frac{x(1-\mu)}{x^2+\mu} f_{\theta}^{(1)}(x),
\end{align*}
where  $\gamma$ is the Euler-Mascheroni constant. The necessary assumptions of Theorems \ref{theorem_consistency_two_step} and \ref{theorem_efficiency_optimal_functions} can be verified as in Section \ref{appa}.

\begin{figure}[!]
    \centering
\vspace{.2cm}
   \begin{subfigure}{.49\textwidth}
\captionsetup{width=.95\textwidth}
  \centering
  \includegraphics[width=7.4cm]{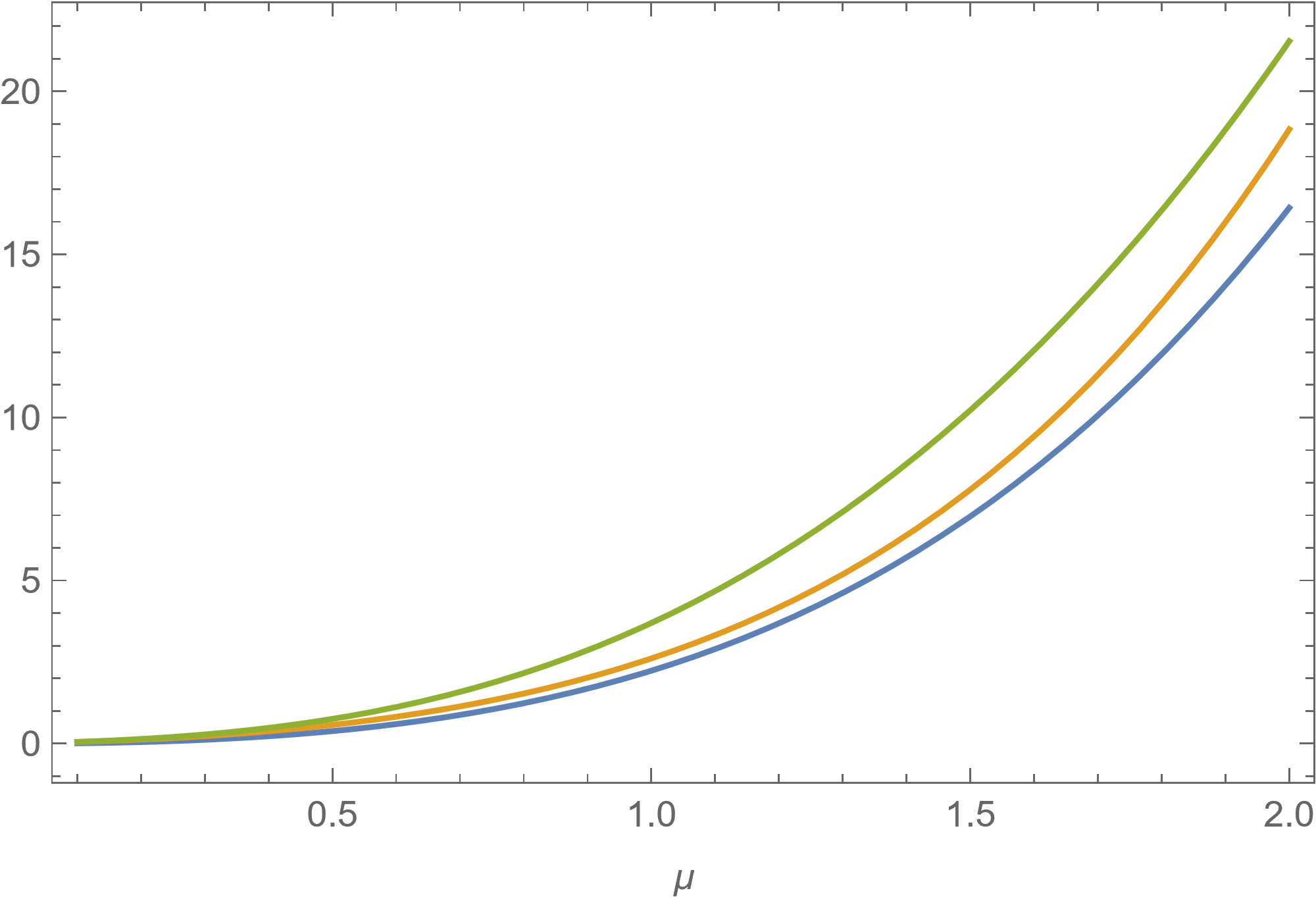}
\end{subfigure}%
  \begin{subfigure}{.49\textwidth}
\captionsetup{width=.95\textwidth}
  \centering
  \includegraphics[width=7.4cm]{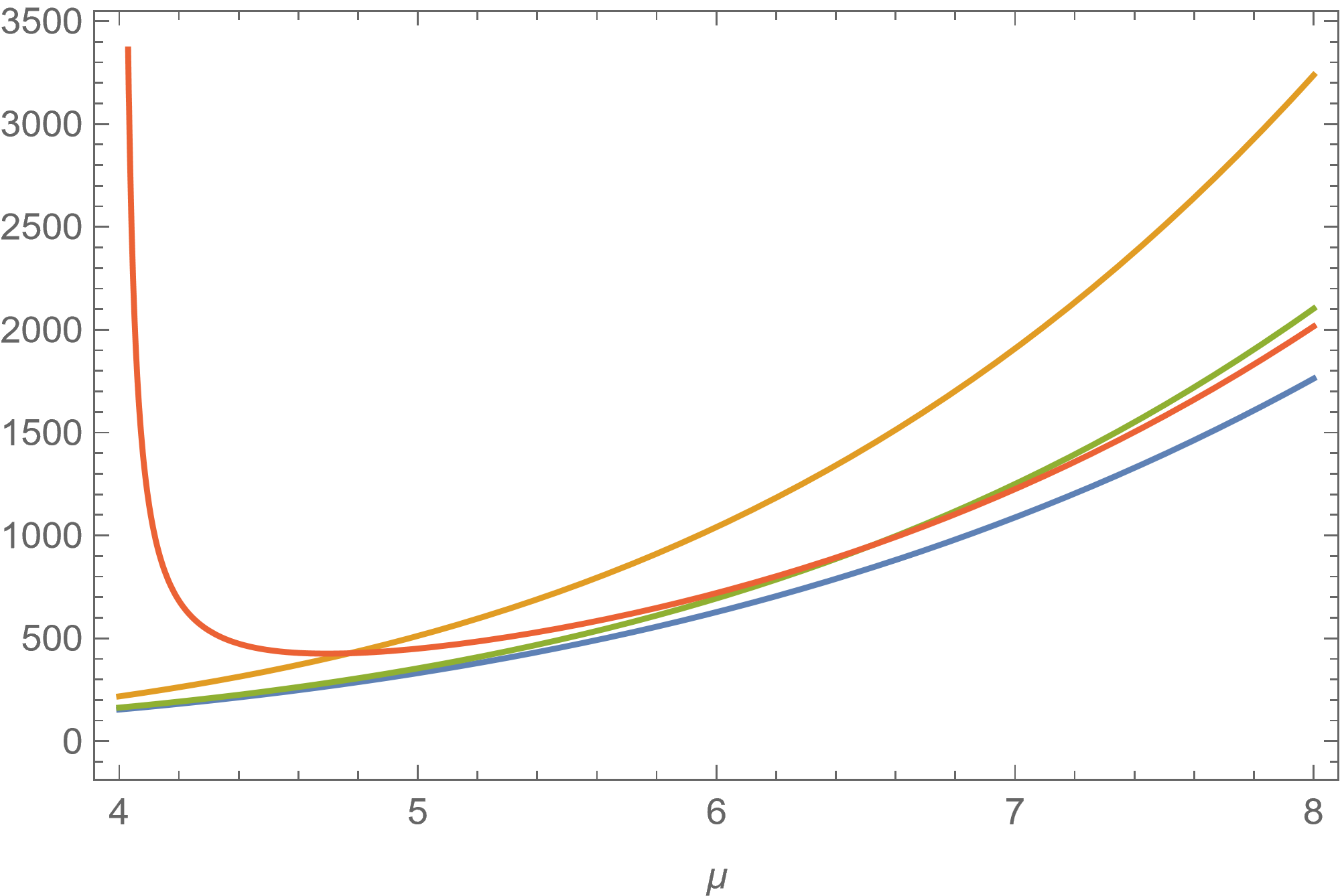}
\end{subfigure}
\caption{\protect\label{fig:studentt_variance} \it The images report the asymptotic variances of the MLE $\hat{\mu}_n^{\mathrm{ML}}$ \includegraphics[scale=1]{blue}, the moment estimator $\hat{\mu}_n^{\mathrm{MO}}$ \includegraphics[scale=1]{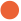} \eqref{t_moment_estimator} and the Stein estimator $\hat{\mu}_n^{\mathrm{ST}}$ \eqref{t_stein_estimators} with tuning parameter $\kappa=10$ \includegraphics[scale=1]{orange} resp.\, $\kappa=100$ \includegraphics[scale=1]{green}. } 
\end{figure}

\subsection{Lomax distribution} \label{lomax_section}
The density of the Lomax distribution $LM(\alpha,\lambda)$ with $\theta=(\alpha,\lambda)$, $\alpha, \lambda >0$, is given by
\begin{align*}
p_{\theta}(x)=\frac{\alpha}{\lambda} \bigg(1+\frac{x}{\lambda} \bigg)^{-(\alpha+1)}, \quad x >0.
\end{align*}
We find that the choice $\tau_{\theta}(x)=x+\lambda$ yields a simple Stein operator. Note that here $\mathbb{E}[X^k]$ for $X\sim LM(\alpha,\lambda)$ exists only if $k>\alpha$. Hence, a Stein operator is given by
\begin{align*}
\mathcal{A}_{\theta}f(x)=(x+\lambda)f'(x) -\alpha f(x).
\end{align*}
Our Stein estimators for test functions $f_1$, $f_2$ are given by
\begin{align*}
\hat{\alpha}_n&=\frac{\overline{f_2'(X)} \ \overline{Xf_1'(X)}-\overline{f_1'(X)} \ \overline{Xf_2'(X)}}{\overline{f_1(X)} \ \overline{f_2'(X)}- \overline{f_1'(X)}) \ \overline{f_2(X)}},  \\
\hat{\lambda}_n&=\frac{\overline{f_2(X)} \ \overline{Xf_1'(X)} -\overline{f_1(X)} \ \overline{Xf_2'(X)}}{\overline{f_1(X)} \ \overline{f_2'(X)}- \overline{f_1'(X)}) \ \overline{f_2(X)}}.
\end{align*}
The purpose of this section is to give an example for an application of Theorem \ref{theorem_sequence_stein_estimator}, that is an iterative procedure that generates a sequence of Stein estimators which converges to the MLE. Thereto, we examine the MLE, which we denote by $ \hat{\theta}_n^{\mathrm{ML}} =(\hat{\alpha}_n^{\mathrm{ML}}, \hat{\lambda}_n^{\mathrm{ML}})$, and which is given through the equations
\begin{gather} \label{lomax_MLE_lambda}
    1=\bigg(1 + \bigg( \frac{1}{n} \sum_{i=1}^n \log\big( 1 + X_i/\hat{\lambda}_n^{\mathrm{ML}} \big) \bigg)^{-1} \bigg) \bigg(\frac{1}{n} \sum_{i=1}^n \frac{X_i}{\hat{\lambda}_n^{\mathrm{ML}} + X_i}  \bigg), \\
    \label{lomax_MLE_alpha}
    \hat{\alpha}_n^{\mathrm{ML}}= \bigg( \frac{1}{n} \sum_{i=1}^n \log\big( 1 + X_i/\hat{\lambda}_n^{\mathrm{ML}} \big) \bigg)^{-1}.
\end{gather}
An explicit solution does not exist. We show in the next lemma that the MLE only exists under certain assumptions.
\begin{Lemma} \label{lemma_lomax_existence_mle}
 The MLE for the Lomax distribution $L(\alpha,\lambda)$ exists if and only if $\overline{X^2}-2\overline{X}^2>0.$
\end{Lemma}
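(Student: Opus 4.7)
The plan is to eliminate $\alpha$ from the MLE equations and work with a one-dimensional profile log-likelihood. Substituting $\hat\alpha_n^{\mathrm{ML}} = 1/H(\lambda)$ from \eqref{lomax_MLE_alpha} back into the log-likelihood $\ell(\alpha,\lambda)=n\log\alpha - n\log\lambda -(\alpha+1)\sum_i \log(1+X_i/\lambda)$ yields
$$\ell_p(\lambda) = -n\log H(\lambda) - n\log\lambda - n - nH(\lambda), \qquad H(\lambda):=\tfrac{1}{n}\sum_{i=1}^n \log(1+X_i/\lambda),$$
so the MLE exists if and only if $\ell_p$ attains its supremum on $(0,\infty)$.

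I would first pin down the boundary behaviour of $\ell_p$. As $\lambda\to 0^+$ one has $H(\lambda)\sim -\log\lambda\to\infty$, so the term $-n\log H(\lambda)$ drives $\ell_p(\lambda)\to-\infty$ and the boundary at $0$ is harmless. As $\lambda\to\infty$, the expansion $H(\lambda)=\overline{X}/\lambda-\overline{X^2}/(2\lambda^2)+O(\lambda^{-3})$ produces $\ell_p(\lambda)\to\ell^\star:=-n\log\overline{X}-n$, and carrying the expansion one order further yields the key identity
$$\ell_p(\lambda)-\ell^\star \;=\; \frac{n\bigl(\overline{X^2}-2\,\overline{X}^{2}\bigr)}{2\,\overline{X}\,\lambda}+O(\lambda^{-2}).$$
The limit $\ell^\star$ is precisely the log-likelihood of the exponential distribution evaluated at its own MLE $\hat\beta=1/\overline{X}$, which is natural because Lomax converges to Exp along the ray $\alpha\to\infty$ with $\lambda/\alpha$ fixed; the boundary at infinity is therefore the delicate one.

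For the sufficient direction, if $\overline{X^2}>2\overline{X}^{2}$ the displayed expansion gives $\ell_p(\lambda)>\ell^\star$ for all sufficiently large $\lambda$, so $\sup\ell_p>\ell^\star$; combined with $\ell_p(0^+)=-\infty$ and continuity, this supremum is attained at some interior $\hat\lambda\in(0,\infty)$, which provides the MLE via \eqref{lomax_MLE_alpha}. For the converse I would use the equivalent formulation $\ell_p'(\lambda)=(n/\lambda)\bigl(g(\lambda)-1\bigr)$, with $g$ the right-hand side of \eqref{lomax_MLE_lambda}, and show that $g(\lambda)>1$ on all of $(0,\infty)$ whenever $\overline{X^2}\leq 2\overline{X}^{2}$. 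Under the reparametrisation $\theta=1/\lambda$, short algebra reduces this to the global inequality
$$h(\theta):=\overline{(1+\theta X)^{-1}}\bigl(1+\overline{\log(1+\theta X)}\bigr) \;<\; 1, \qquad \theta>0,$$
for which a direct computation yields $h(0)=1$, $h'(0)=0$, $h''(0)=\overline{X^2}-2\overline{X}^{2}$, and $h(\theta)\to 0$ as $\theta\to\infty$; thus the hypothesis immediately forces $h<1$ near the origin.

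The main obstacle is promoting this local control to a global bound on $(0,\infty)$. My plan is to analyse the Jensen-type excess $\overline{e^{-V}}-e^{-\overline{V}}$ with $V=\log(1+\theta X)$ against the deficit $1/(1+\overline{V})-e^{-\overline{V}}$, relating the empirical variance of $V$ to $\overline{X^2}-2\overline{X}^{2}$ via a monotonicity argument in $\theta$, and showing that under the hypothesis the Jensen excess never catches up with the deficit. Granting this global inequality, $\ell_p(\lambda)<\ell^\star$ holds throughout $(0,\infty)$ while $\ell_p(\lambda)\to\ell^\star$ only at infinity, so the supremum is never attained and the MLE fails to exist, completing the equivalence.
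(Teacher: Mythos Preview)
Your sufficient direction is correct and is essentially the paper's argument in disguise: both proofs reduce to the same second-order expansion at $\lambda=\infty$ (equivalently $\gamma=1/\lambda\to 0$) and use the boundary behaviour at $\lambda=0$ to conclude that the supremum is attained in the interior.

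The genuine gap is in the necessary direction, and you have correctly located it yourself. Your plan to control the Jensen-type excess against the deficit $1/(1+\overline{V})-e^{-\overline{V}}$ is not a proof; in particular, the ``monotonicity argument in $\theta$'' relating the empirical variance of $V=\log(1+\theta X)$ to $\overline{X^2}-2\overline{X}^2$ is not spelled out, and it is far from clear that such a comparison can be made uniform in $\theta$ for an arbitrary sample. The inequality $h(\theta)<1$ is a statement about the \emph{entire} half-line, and nothing in your local data $h(0)=1$, $h'(0)=0$, $h''(0)\le 0$, $h(\infty)=0$ prevents $h$ from rising above $1$ somewhere in between.

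The paper sidesteps this difficulty entirely by importing an external structural fact: the Lomax log-likelihood is strictly concave (Giles et al.). Concavity implies that the profile score $\ell_p'(\lambda)=(n/\lambda)(g(\lambda)-1)$ has \emph{at most one} zero on $(0,\infty)$; equivalently, in the paper's notation, $\ell_1$ and $\ell_2$ cross at most once. Since for large $\gamma$ one always has $\ell_2>\ell_1$, the single-crossing property upgrades the local second-derivative comparison at $\gamma=0$ to a global dichotomy: either $\ell_1>\ell_2$ near $0$ (exactly one crossing, MLE exists) or $\ell_1\le\ell_2$ near $0$ (no crossing, MLE fails). This is precisely the global control you are trying to manufacture by hand. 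If you want to repair your argument without redoing the concavity proof, the cleanest fix is simply to cite it and observe that it forces $h-1$ to change sign at most once.
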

\begin{proof}
    In \cite{giles2013bias}, it was shown that the likelihood function of the Lomax distribution with respect to an i.i.d.\ sample is strictly concave. Therefore, it admits at most one global maximum, and since the likelihood function is differentiable, this global maximum (if it exists) is characterised by \eqref{lomax_MLE_lambda} and \eqref{lomax_MLE_alpha}. Since $\hat{\alpha}_n^{\mathrm{ML}}$ can be expressed explicitly as a function of $\hat{\lambda}_n^{\mathrm{ML}}$, it is clear that the MLE exists if and only if there is a $\hat{\lambda}_n^{\mathrm{ML}}$ that satisfies equation \eqref{lomax_MLE_lambda}. In order to examine the latter condition, we rearrange the equation and get
    \begin{align} \label{lomax_proof_mle_exist_lambda_eq}
       \frac{1}{n} \sum_{i=1}^n \log\big( 1 + X_i/\hat{\lambda}_n^{\mathrm{ML}} \big) =  \bigg(\frac{1}{n} \sum_{i=1}^n \frac{X_i}{\hat{\lambda}_n^{\mathrm{ML}} + X_i}  \bigg) \bigg( \frac{1}{n} \sum_{i=1}^n \log\big( 1 + X_i/\hat{\lambda}_n^{\mathrm{ML}} \big) + 1 \bigg).
    \end{align}
    We define
    \begin{align*}
        \ell_1(\gamma)=  \frac{1}{n} \sum_{i=1}^n \log\big( 1 + \gamma  X_i \big)  \quad \text{and} \quad \ell_2(\gamma)= \bigg(\frac{1}{n} \sum_{i=1}^n \frac{\gamma X_i}{1 + \gamma X_i}  \bigg) \bigg( \frac{1}{n} \sum_{i=1}^n \log\big( 1 + \gamma X_i \big) + 1 \bigg),
    \end{align*}
    and can rewrite \eqref{lomax_proof_mle_exist_lambda_eq} by $\ell_1(1/\hat{\lambda}_n^{\mathrm{ML}})=\ell_2(1/\hat{\lambda}_n^{\mathrm{ML}})$. We know that for $\gamma>0$ large enough, we will eventually have $\ell_2(\gamma)>\ell_1(\gamma)$. Moreover, it is an easy task to compute the limits $\lim_{\gamma \rightarrow 0} \ell_{1,2}(\gamma)=0$ and $\lim_{\gamma \rightarrow \infty} \ell_{1,2}(\gamma)=\infty$. Furthermore, $\lim_{\gamma \rightarrow 0} \ell_{1,2}'(\gamma)=\overline{X}$. With the concavity of the likelihood function, it is clear that $\ell_1$ and $\ell_2$ intersect at most once for $\gamma>0$. By considering the aforementioned limits, this is the case if and only if $\lim_{\gamma \rightarrow 0}\ell_{1}''(\gamma)>\lim_{\gamma \rightarrow 0}\ell_{2}''(\gamma)$. Tedious calculations yield $\lim_{\gamma \rightarrow 0}\ell_{1}''(\gamma)=-\overline{X^2}$ and $\lim_{\gamma \rightarrow 0}\ell_{2}''(\gamma)=-2\overline{X^2}+2\overline{X}^2$, which concludes the proof.
\end{proof}
In \cite{giles2013bias}, the authors also report difficulties when performing MLE, which is in accordance with Lemma \ref{lemma_lomax_existence_mle}. It is interesting to see that the MLE exists if and only if the moment estimator $\hat{\lambda}_n^{\mathrm{MO}}$ is positive. We show that the condition for the existence of the MLE above is asymptotically satisfied and therefore complies with Assumption \ref{assumptions_iteratice_proc}(a).
\begin{Lemma}
 If $X_1,\ldots,X_n \sim L(\alpha,\lambda)$, then $\overline{X^2}-2\overline{X}^2>0$ with probability converging to one.
\end{Lemma}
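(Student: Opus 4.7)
The plan is to split into three regimes according to which moments of $L(\alpha,\lambda)$ are finite, and argue separately in each case; recall that for $X \sim L(\alpha,\lambda)$ one has $\mathbb{E}[X^k]<\infty$ if and only if $k<\alpha$.

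If $\alpha>2$, both $\mathbb{E}[X]=\lambda/(\alpha-1)$ and $\mathbb{E}[X^2]=2\lambda^2/((\alpha-1)(\alpha-2))$ are finite, so the strong law of large numbers immediately gives $\overline{X^2}-2\overline{X}^2\to 2\lambda^2/((\alpha-1)^2(\alpha-2))>0$ almost surely, which settles the claim. If $1<\alpha\leq 2$, then $\overline{X}^2\to (\lambda/(\alpha-1))^2$ a.s.\ by the SLLN, while a truncation argument (apply SLLN to $X_i^2\wedge M$ and then let $M\to\infty$) shows $\overline{X^2}\to\infty$ a.s., since $\mathbb{E}[X^2]=\infty$. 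Hence $\overline{X^2}-2\overline{X}^2\to\infty$ a.s.\ in this regime as well.

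The only delicate case is $\alpha\leq 1$, where neither moment is finite and the standard SLLN is unavailable. Here I would use the deterministic lower bound
\[
\frac{\overline{X^2}}{\overline{X}^2} \;=\; \frac{n\sum_{i=1}^n X_i^2}{\bigl(\sum_{i=1}^n X_i\bigr)^2} \;\geq\; n\left(\frac{M_n}{\sum_{i=1}^n X_i}\right)^2, \qquad M_n=\max_{1\leq i\leq n} X_i,
\]
combined with classical heavy-tail asymptotics. For $0<\alpha<1$ the maximum and the sum are of the same stochastic order $n^{1/\alpha}$, so $M_n/\sum_i X_i$ converges in distribution to a non-degenerate limit supported in $(0,1]$; for $\alpha=1$ one has $M_n$ of order $n$ in probability while $\sum_i X_i$ is of order $n\log n$, giving $n(M_n/\sum_i X_i)^2\asymp n/(\log n)^2$. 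In either sub-case the lower bound diverges in probability, so $\overline{X^2}/\overline{X}^2\to\infty$ in probability and therefore $\mathbb{P}(\overline{X^2}-2\overline{X}^2>0)\to 1$.

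The main obstacle is this third case: no ordinary law of large numbers is available, and one must exploit the fact that in the heavy-tailed regime the sum of squares is dominated by the square of a single extreme observation, while the sum itself is not large enough relative to that maximum to compensate.
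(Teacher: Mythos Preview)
Your proof is correct and essentially self-contained, but the route you take for $\alpha\leq 2$ is genuinely different from the paper's. Both treat $\alpha>2$ identically via the SLLN. For $\alpha<2$ the paper argues by stochastic comparison: it couples the sample with an i.i.d.\ sample from a Lomax$(\tilde\alpha,1)$ with $\tilde\alpha>2$, observes that the functional $F(x_1,\dots,x_n)=\overline{x^2}-2\,\overline x^2$ is coordinatewise monotone on the event $\{\overline X>1/2\}$ (which has probability tending to $1$), and then transfers the result from the $\tilde\alpha>2$ case via the stochastic ordering. Your argument instead splits $\alpha\le 2$ further: for $1<\alpha\le 2$ you use the SLLN on $\overline X$ together with the elementary truncation fact that $\overline{X^2}\to\infty$ a.s.\ when $\mathbb{E}[X^2]=\infty$; for $\alpha\le 1$ you bound $\overline{X^2}/\overline X^2\ge n(M_n/S_n)^2$ and invoke classical heavy-tail asymptotics for the max-to-sum ratio.

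Each approach has its merits. Your $1<\alpha\le 2$ step is shorter and more transparent than the paper's coupling, and it also covers the boundary case $\alpha=2$ (which the paper's stated case split omits). The paper's monotone-coupling argument, on the other hand, handles all of $0<\alpha<2$ uniformly without appealing to any stable-law or extreme-value results, whereas your $\alpha\le 1$ step leans on facts (convergence in distribution of $M_n/S_n$ for $0<\alpha<1$, the $S_n\sim cn\log n$ asymptotic at $\alpha=1$) that are standard but not as elementary. If you keep your version, it would be worth citing a precise reference for the max-to-sum ratio result in the $0<\alpha<1$ regime and for the weak law at $\alpha=1$, so the reader does not have to reconstruct them.
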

\begin{proof}
Note that the event $\{\omega \in \Omega \,\vert \, \overline{X^2}-2\overline{X}^2>0\}$ is independent of $\lambda>0$, and we can therefore assume without loss of generality that $\lambda=1$. We first treat the case $\alpha>2$. In this case,
\begin{align*}
    \mathbb{P}(\overline{X^2}-2\overline{X}^2>0)&= \mathbb{P}\Big(\overline{X^2}-2\overline{X}^2 - \big( \mathbb{E}[X^2] - 2\mathbb{E}[X]^2 \big) >- \big( \mathbb{E}[X^2] - 2\mathbb{E}[X]^2 \big) \Big) \\
    &=\mathbb{P}\Big( \mathbb{E}[X^2] - 2\mathbb{E}[X]^2 - \big(\overline{X^2}-2\overline{X}^2 \big) < \mathbb{E}[X^2] - 2\mathbb{E}[X]^2 \Big)  \\
    &\geq\mathbb{P}\bigg( \big\vert \mathbb{E}[X^2] - 2\mathbb{E}[X]^2 - \big(\overline{X^2}-2\overline{X}^2 \big) \big\vert < \frac{2}{(\alpha-1)^2(\alpha-2)} \bigg),
\end{align*}
where $X \sim L(\alpha,1)$. The last expression converges to $1$ due to the strong law of large numbers. Now let $0<\alpha<2$, and let $\tilde{X}_1,\ldots\tilde{X}_n \sim L(\tilde{\alpha},1)$ be i.i.d.\ with $\tilde{\alpha}>2$. By examining the CDF of the Lomax distribution, one realises that $X_i$ and $\tilde{X}_i$ are stochastically ordered, i.e.\ $X_i \prec \tilde{X}_i$ for each $i=1,\ldots,n$ with respect to the standard ordering
\begin{align*}
    X \prec Y \quad \text{if and only if} \quad \mathbb{P}(X>x) \leq \mathbb{P}(Y>x) \mbox{ for all } x \in \mathbb{R}, 
\end{align*}
where $X$ and $Y$ are two real-valued random variables. We define the function
\begin{align*}
    F:\mathbb{R}_+^n \rightarrow \mathbb{R}, \: (x_1,\ldots,x_n) \mapsto \frac{1}{n} \sum_{i=1}^n x_i^2 - 2\bigg( \frac{1}{n} \sum_{i=1}^n x_i \bigg)^2,
\end{align*}
and have
\begin{align*}
    \frac{\partial }{\partial x_j}F(x_1,\ldots,x_n)= \frac{2x_j}{n} \bigg(1-  \frac{2}{n} \sum_{i=1}^n x_i \bigg).
\end{align*}
It is clear that for each $x \in \mathbb{R}_+^n$ that satisfies $1-2n^{-1}\sum_{i=1}^n x_i<0$, the function $F$ is monotonically decreasing in each component. We now define the sequence of sets $A_n$, $n \in \mathbb{N}$ by $A_n=\{ \omega \in \Omega \, \vert \, 1-2\overline{X}<0 \} $. Then we know that for the conditional distributions we get
\begin{align*}
     \frac{1}{n} \sum_{i=1}^n X_i^2 - 2 \bigg( \frac{1}{n} \sum_{i=1}^n X_i \bigg)^2 \, \bigg\vert \, A_n \succ \frac{1}{n} \sum_{i=1}^n \tilde{X}_i^2 - 2 \bigg( \frac{1}{n} \sum_{i=1}^n \tilde{X}_i \bigg)^2 \, \bigg\vert \, A_n.
\end{align*}
With the strong law of large numbers, $1-2\overline{X}$ will converge almost surely to $(\alpha-2)/(\alpha-1)<0$ if $1<\alpha<2$ and will diverge to $-\infty$ if $0<\alpha<1$, as $n \rightarrow \infty$. Hence, we conclude that $\mathbb{P}(A_n) \rightarrow 1$. Now, let $B_n=\{ \omega \in \Omega \, \vert \,\overline{X^2}-2\overline{X}^2 >0 \}$. Then 
\begin{align*}
    \mathbb{P}(B_n)&= \mathbb{P}(B_n \, \vert \, A_n)\mathbb{P}(A_n) + \mathbb{P}(B_n \, \vert \, \bar{A}_n)\mathbb{P}(\bar{A}_n) \\
    &\geq \mathbb{P}\bigg(\frac{1}{n} \sum_{i=1}^n \tilde{X}_i^2 - 2 \bigg( \frac{1}{n} \sum_{i=1}^n \tilde{X}_i \bigg)^2 >0 \, \bigg\vert \, A_n \bigg)\mathbb{P}(A_n) + \mathbb{P}(B_n \, \vert \, \bar{A}_n)\mathbb{P}(\bar{A}_n), 
\end{align*}
which converges to $1$ by the first part of the proof.
\end{proof}
The CDF of the Lomax distribution is given by
\begin{align*}
P_{\theta}(x)=1-\bigg(1+\frac{x}{\lambda} \bigg)^{-\alpha}, \quad x>0.
\end{align*}
The optimal functions are therefore
\begin{align*}
f_{\theta}^{(1)}(x)= \frac{ \log \big(1+\frac{x}{\lambda} \big) }{\alpha} \quad \text{and} \quad
f_{\theta}^{(2)}(x)=-\frac{x}{\lambda (x+ \lambda)}.
\end{align*}
It remains to verify Assumptions \ref{assumptions_iteratice_proc}(c), (b) and (d), of which the latter two can be checked easily. For condition (c), we have to show that $\mathbb{E}[M_{\theta}(X)]\frac{\partial }{\partial \theta} g(\theta)$ is invertible on $\Theta_0$. We compute
\begin{align*}
    M_{\theta}(x)\frac{\partial}{\partial \theta}g(\theta)= 
     \begin{pmatrix}
        -f_{\theta}^{(1)}(x) & \big(f_{\theta}^{(1)}\big)'(x) \\
        -f_{\theta}^{(2)}(x) & \big(f_{\theta}^{(2)}\big)'(x)
    \end{pmatrix}
\end{align*}
and show that the expectation with respect to $\mathbb{P}_{\theta_0}$ of the latter matrix is invertible for all $(\alpha_0,\lambda_0) \in \Theta$ and $(\alpha,\lambda) \in \Theta_0$. It suffices to show that the determinant
\begin{align} \label{lomax_det}
    \det\bigg( \mathbb{E} \bigg[ M_{\theta}(X)\frac{\partial}{\partial \theta}g(\theta) \bigg]\bigg)=\mathbb{E}\big[\big(f_{\theta}^{(1)}\big)'(X) \big] \mathbb{E}\big[f_{\theta}^{(2)}(X)\big] - \mathbb{E}\big[\big(f_{\theta}^{(2)}\big)'(X) \big] \mathbb{E}\big[f_{\theta}^{(1)}(X)\big],
\end{align}
where $X \sim \mathbb{P}_{\theta_0}$, is always positive. Note first that the derivatives of the optimal functions are given by
\begin{align*}
    \big(f_{\theta}^{(1)}\big)'(x)= \frac{1}{\alpha} \frac{1}{x+\lambda} \quad \text{and} \quad
    \big(f_{\theta}^{(2)}\big)'(x)=-\frac{1}{ (x+ \lambda)^2}.
\end{align*}
Then, by applying dominated and monotone convergence to each expectation in \eqref{lomax_det}, we obtain that the determinant diverges to $\infty$ as $\lambda \rightarrow 0$ and converges to $0$ as $\lambda \rightarrow \infty$. After examining the signs of each expectation, we conclude that the determinant is monotonically decreasing in $\lambda$. Moreover, the sign of the determinant is independent of $\alpha$, since we have dependence only through $f_{\theta}^{(1)}$, $\big(f_{\theta}^{(1)}\big)'$ and $\alpha>0$. The latter reasoning is true for all $(\alpha_0,\lambda_0)\in \Theta$, and thus Assumption \ref{assumptions_iteratice_proc}(c) is verified. In summary, we can apply Theorem \ref{theorem_sequence_stein_estimator} and state that the iteratively defined sequence of Stein estimators is converging to the MLE with probability converging to $1$ as the sample size grows. For the sake of completeness, we also give the asymptotic variance of the MLE, which is the inverse of the Fisher information matrix and given by
\begin{align*}
I_{\mathrm{ML}}^{-1}(\alpha,\lambda)=
(\alpha +1)
\begin{pmatrix}
(\alpha +1)\alpha^2 & \lambda(\alpha +2)\alpha \\
\lambda(\alpha +2)\alpha & \frac{\lambda^2(\alpha +1)(\alpha +2)}{\alpha}
\end{pmatrix}.
\end{align*}
We briefly consider the moment estimators (which are only consistent if $\alpha>2$ and are obtained through the test functions $f_1(x)= x$ and $f_2(x)=x^2$) given by
\begin{align*}
    \hat{\alpha}_n^{\mathrm{MO}}=\frac{2\overline{X}^2-2\overline{X^2}}{2\overline{X}^2-\overline{X^2}}  \quad \text{and} \quad 
    \hat{\lambda}_n^{\mathrm{MO}}=\frac{\overline{X} \, \overline{X^2}}{\overline{X^2}-2\overline{X}^2}.
\end{align*}
It can be seen directly from Lemma \ref{lemma_lomax_existence_mle} that the MLE exists if and only if $ \hat{\lambda}_n^{\mathrm{MO}}$ is positive. The asymptotic variance of $\hat{\theta}_n^{\mathrm{MO}}$ can be computed explicitly and is given by
\begin{align*}
\frac{\alpha(\alpha -1)^2}{(\alpha -3)(\alpha -4)}
\begin{pmatrix}
(\alpha -2)(6+(\alpha -1)\alpha) & \lambda(4+(\alpha -2)\alpha) \\
\lambda(4+(\alpha -2)\alpha) & \frac{\lambda^2(4+(\alpha -3)\alpha) }{\alpha-2}
\end{pmatrix}.
\end{align*}
Note that the latter formula is only valid for $\alpha>4$. A further discussion on fitting the Lomax distribution is available in \cite{labban20192}, although the authors perform simulation studies with contaminated data.

\subsection{Nakagami distribution} \label{subsection_nakagami_distribution}
The density of the Nakagami distribution $NG(m,O)$ with $\theta=(m,O)$, $m, O >0$, is given by
\begin{align*}
p_{\theta}(x)=\frac{2m^m}{\Gamma(m)O^m} x^{2m-1} \exp\bigg(-\frac{m}{O} x^2 \bigg), \quad x >0.
\end{align*}
We take $\tau_{\theta}(x)=O x$ which yields the Stein operator
\begin{align*}
\mathcal{A}_{\theta}f(x)=2m(O - x^2)f(x) +x O f'(x)
\end{align*}
(see \cite{gauntlaplace2021}). The Stein estimators for test functions $f_1$, $f_2$ are given by
\begin{align*}
\hat{m}_n&= \frac{1}{2} \frac{\overline{X^2f_2(X)} \ \overline{Xf_1'(X)}-\overline{X^2f_1(X)} \ \overline{Xf_2'(X)}}{\overline{f_2(X)} \ \overline{X^2f_1(X)}- \overline{f_1(X)} \ \overline{X^2f_2(X)}}, \\
\hat{O}_n&=\frac{\overline{X^2f_2(X)} \ \overline{Xf_1'(X)}-\overline{X^2f_1(X)} \ \overline{Xf_2'(X)}}{\overline{f_2(X)} \ \overline{Xf_1'(X)}- \overline{f_1(X)}) \ \overline{Xf_2'(X)}}.
\end{align*}
Regarding other estimation methods, we consider the moment estimator, which we denote by $\hat{\theta}_n^{\mathrm{MO1}}=(\hat{m}_n^{\mathrm{MO1}},\hat{O}_n^{\mathrm{MO1}})$, and give the first two moments
\begin{align} \label{nakagami_moment_estimator}
\begin{split}
    \mathbb{E}[X]= \frac{\Gamma(m+\frac{1}{2})}{\Gamma(m)} \bigg(\frac{O}{m} \bigg)^{1/2} \quad \text{and} \quad
    \mathbb{E}[X^2]=O.
\end{split}
\end{align}
As it can be easily seen from \eqref{nakagami_moment_estimator}, $\hat{m}_n^{\mathrm{MO1}}$ is not explicit and requires a numerical procedure in order to solve the non-linear equation. Note that we are not able to retrieve these estimators with specific test functions. This can be readily seen since our estimators are always explicit, regardless of the choice of test functions. However, the asymptotic variance of the moment estimator can be calculated explicitly and is given by the matrix $V_{\mathrm{MO1}}(\theta)$ with entries
\begin{gather*}
\big(V_{\mathrm{MO1}}(\theta)\big)_{11}= \frac{m \big(4 m^2+(m+1) ((m)_{\nicefrac{1}{2}}){}^2-4 (m)_{\nicefrac{1}{2}} (m)_{\nicefrac{3}{2}}\big)}{((m)_{\nicefrac{1}{2}}){}^2
   \big(2 m \psi(m)-2 m \psi(m+\frac{1}{2})+1\big)^2}, \quad \big(V_{\mathrm{MO1}}(\theta)\big)_{22}=\frac{(m+1) O ^2}{m}, \\
  \big(V_{\mathrm{MO1}}(\theta)\big)_{12}=\big(V_{\mathrm{MO1}}(\theta)\big)_{21}= \frac{m O }{-2 m \psi(m)+2 m \psi (m+\frac{1}{2})-1}, 
\end{gather*}
where $(a)_{b}=\Gamma(a+1)/\Gamma(a-b+1)$ denotes the Pochhammer symbol. In \cite{artyushenko2019nakagami}, the moment estimators 
\begin{align} \label{nakagami_modified_moment_estimators}
\begin{split}
\hat{m}_n^{\mathrm{MO2}}= \frac{(\overline{X^2})^2 }{\overline{X^4} - (\overline{X^2})^2} \quad \text{and} \quad
\hat{O}_n^{\mathrm{MO2}}=\overline{X^2}
\end{split}
\end{align}
are proposed (which are obtained through the test functions $f_1(x)=1$, $f_2(x)=x^2$ in our approach). However, the authors do not calculate the asymptotic variance, which is easily computed through Theorem \ref{theorem_asymptotic_normality}, and is given by
\begin{align*}
V_{\mathrm{MO2}}(\theta)=
\begin{pmatrix}
2m(m+1) & 0 \\
0 & \frac{O^2}{m}
\end{pmatrix}.
\end{align*}
In  \cite{zhao2021closed}  a generalized Nakagami distribution is used to  derive the  moment type estimator 
\begin{align} \label{nakagami_modified_moment_estimators2}
\hat{m}_n^{\mathrm{MO3}}= \frac12 \frac{{\overline{X^2}} }{\overline{X^2 \log(X) } - {\overline{X^2} \, \overline{\log X}}}, \quad  \hat O_n = \overline{X^2}
\end{align}
whose  asymptotic variance is 
\begin{align*}
    V_{\mathrm{MO3}}(\theta) = \begin{pmatrix}
        m^2(1 + m \psi'(m+1)) & 0 \\
        0 & \frac{O^2}{m}
    \end{pmatrix}.
\end{align*}  
The MLE $\hat{\theta}_n^{\mathrm{ML}}=(\hat{m}_n^{\mathrm{ML}},\hat{O}_n^{\mathrm{ML}})$ is described  by
\begin{align} \label{nakagami_MLE}
\begin{split}
     \log \hat{m}_n^{\mathrm{ML}} - \psi(\hat{m}_n^{\mathrm{ML}})&=\log \overline{X^2} - 2 \overline{\log X}, \\
    \hat{O}_n^{\mathrm{ML}}&= \overline{X^2},
\end{split}
\end{align}
and is unique and exists almost surely. To see that, note that the likelihood function admits exactly one critical point at $(\hat{m}_n^{\mathrm{ML}},\hat{O}_n^{\mathrm{ML}})$, which can be identified to be a local maximum by the second derivative test. Although the likelihood is not necessarily concave, we know that for each $m>0$ the function $O \mapsto \sum_{i=1}^n \log p_{\theta}(X_i)$ has a global maximum at $O =\hat{O}_n^{\mathrm{ML}} $ and the function $m \mapsto \sum_{i=1}^n \log p_{(m,\hat{O}_n^{\mathrm{ML}})}(X_i)$ has a global maximum at $m =\hat{m}_n^{\mathrm{ML}} $, which yields that the local maximum of the likelihood function is also a global maximum. In \cite{kolar2004estimator}, multiple algorithms for solving the likelihood equation are compared, which boils down to a comparison of approximations of the digamma function. Since we deal with a regular probability distribution, we know that the asymptotic covariance matrix of the MLE is given by the inverse of the Fisher information matrix, which is equal to
\begin{align*}
I_{\mathrm{ML}}^{-1}(\theta)=
\begin{pmatrix}
\frac{m}{m\psi(m)-1} & 0 \\
0 & \frac{O^2}{m}
\end{pmatrix}.
\end{align*}
Regarding the Stein estimator,  two of the immediate choices of test functions are already covered:  $f_1(x) = 1$ and $f_2(x) = x^2$ leads to \eqref{nakagami_modified_moment_estimators}  and $f_1(x) = 1$ and $f_2(x) = \log(x)$ leads to \eqref{nakagami_modified_moment_estimators2}. In light of the discussion to be had in  Section \ref{section_discussionnaka},     
 we propose to use the test functions $f_1(x)= 1$ and $f_2(x)= x$, which yield the new estimators
\begin{align} \label{nakagami_stein_estimators}
\hat{m}_n^{\mathrm{ST}}= \frac{1}{2} \frac{\overline{X^2} \ \overline{X}}{\overline{X^3} - \overline{X} \ \overline{X^2}} \quad \text{and} \quad
\hat{O}_n^{\mathrm{ST}}=\overline{X^2}.
\end{align}
An application of a multivariate version of Jensen's inequality gives that both estimates in \eqref{nakagami_stein_estimators} are always positive, whereas the (modified) moment estimators \eqref{nakagami_modified_moment_estimators} may not exist for small sample sizes. With Theorem \ref{theorem_asymptotic_normality}, we compute the asymptotic variance of the Stein estimator, and obtain $V_{\mathrm{ST}}$ with entries
\begin{align*}
\big(V_{\mathrm{ST}}(\theta)\big)_{11}&=\frac{m(5+4m)\Gamma(1+m)^2}{\Gamma(\nicefrac{1}{2}+m)^2}-m(1+2m)^2, \\
\big(V_{\mathrm{ST}}(\theta)\big)_{12}&=\big(V_{\mathrm{ST}}(\theta)\big)_{21}=0, \quad \big(V_{\mathrm{ST}}(\theta)\big)_{22}=\frac{O^2}{m}.
\end{align*}
Lastly, we consider the two-step Stein estimator, which we denote by $\hat{\theta}_n^{\mathrm{ST2}}=(\hat{m}_n^{\mathrm{ST2}},\hat{O}_n^{\mathrm{ST2}})$. The optimal functions are given by
\begin{align*}
    f_{\theta}^{(1)}(x)&= \frac{1}{2m^3x^{2m}} \bigg(\frac{O}{m}\bigg)^{m-1} \bigg( \bigg( \frac{mx^2}{O} \bigg)^m \bigg( m- \exp\bigg(\frac{mx^2}{O} \bigg) \, _2F_2\bigg(m ,m ;m +1,m +1;-\frac{mx^2}{O}\bigg)  \bigg) \\
    & \quad+m^2 \exp\bigg(\frac{mx^2}{O} \bigg) \gamma\bigg(m,\frac{mx^2}{O} \bigg)  \bigg( \log\bigg(\frac{mx^2}{O} \bigg) - \psi(m) \bigg) \bigg) , \\
    f_{\theta}^{(2)}(x)&=-\frac{1}{2O^2}.
\end{align*}
Once again the derivatives can be expressed in terms of the original functions. We obtain
\begin{align*}
    \big(f_{\theta}^{(1)}\big)'(x)=\frac{1-\frac{x^2}{O}+\log\big(\frac{mx^2}{O}\big)-\psi(m)}{O x}-m\bigg(\frac{2}{x}-\frac{2x}{O} \bigg)f_{\theta}^{(1)}(x).
\end{align*}
Analogous to the gamma distribution in Section \ref{appa}, the necessary assumptions for Theorems \ref{theorem_consistency_two_step} and \ref{theorem_efficiency_optimal_functions} can be verified and the latter two theorems are applicable for a suitable first-step estimator, which we choose to be the explicit Stein estimator $\hat{\theta}_n^{\mathrm{ST}}$. By studying the corresponding asymptotic covariance matrix, it is evident that the moment estimators \eqref{nakagami_moment_estimator} perform poorly, which is why we excluded the latter below. In Figure \ref{fig:nakagami_variance}, the asymptotic variances of the (modified) moment estimators \eqref{nakagami_modified_moment_estimators} and \eqref{nakagami_modified_moment_estimators2}, the MLE and the Stein estimator $\hat{m}_n^{\mathrm{ST}}$ of $m$ are plotted for a range of values of $m_0$. Note that the estimators for $O$ coincide for all considered estimation methods. We observe that we improve on the (modified) moment estimator $\hat{\theta}_n^{\mathrm{MO2}}$ in terms of asymptotic variance but not on $\hat{\theta}_n^{\mathrm{MO3}}$, and neither do we  reach efficiency. We also performed a simulation study, whose results are reported in Tables \ref{nakagami_sim_n20} and \ref{nakagami_sim_n50}. Here, we also included the asymptotically efficient two-step Stein estimator $\hat{\theta}_n^{\mathrm{ST2}}$ (with first-step estimator $\hat{\theta}_n^{\mathrm{ST}}$). The (modified) moment estimator  $\hat{\theta}_n^{\mathrm{MO2}}$ served as an initial guess for the MLE. We notice that the MLE seems to be globally the best, followed by the modified moment estimator $\hat{\theta}_n^{\mathrm{MO3}}$ for small samples and $\hat{\theta}_n^{\mathrm{ST2}}$ for large samples, as expected. Our new estimators perform similarly well, although  we find that the estimator $\hat{m}_n^{\mathrm{ST2}}$ seems to break down completely for $(m_0,O_0)=(2,5)$ in the small sample case.  More interesting observations are made in Section \ref{section_discussionnaka} from the main text. 

\begin{table} \small
\centering
\begin{tabular}{cc|ccccc|ccccc}
 $\theta_0$ & & \multicolumn{5}{c}{Bias} & \multicolumn{5}{|c}{MSE} \\ \hline 
 & & $\hat{\theta}_n^{\mathrm{MO2}}$ & $\hat{\theta}_n^{\mathrm{MO3}}$ & $\hat{\theta}_n^{\mathrm{ML}}$ & $\hat{\theta}_n^{\mathrm{ST}}$ & $\hat{\theta}_n^{\mathrm{ST2}}$ & $\hat{\theta}_n^{\mathrm{MO2}}$ & $\hat{\theta}_n^{\mathrm{MO3}}$ & $\hat{\theta}_n^{\mathrm{ML}}$ & $\hat{\theta}_n^{\mathrm{ST}}$ & $\hat{\theta}_n^{\mathrm{ST2}}$ \\ \hline
\multirow{1}{*}{$(1,1)$} & $m$  &  $0.269$ & $0.146$ & \hl $ \hl 0.138$ & $0.19$ & $0.183$ & $0.31$ & $0.156$ & \hl $ \hl 0.149$ & $0.211$ & $0.175$  \\ \hline 
\multirow{1}{*}{$(0.8,1)$} & $m$  & $0.242$ & $0.119$ &  \hl $ \hl 0.11$ & $0.164$ & $0.153$ & $0.221$ & $0.099$ &  \hl $ \hl 0.094$ & $0.141$ & $0.153$  \\ \hline 
\multirow{1}{*}{$(1.4,0.8)$} & $m$  & $0.347$ & $0.22$ &  \hl $ \hl 0.214$ & $0.264$ & $0.259$ & $0.573$ & $0.349$ &  \hl $ \hl 0.337$ & $0.428$ & $0.383$  \\ \hline 
\multirow{1}{*}{$(3,5)$} & $m$  & $0.623$ & $0.487$ &  \hl $ \hl 0.48$ & $0.533$ & $0.612$ & $2.26$ & $1.75$ &  \hl $ \hl 1.71$ & $1.93$ & $11.4$\\ \hline 
\multirow{1}{*}{$(3,1)$} & $m$  & $0.632$ & $0.502$ &  \hl $ \hl 0.496$ & $0.545$ & $0.543$ & $2.27$ & $1.79$ &  \hl $ \hl 1.77$ & $1.96$ & $1.91$ \\ \hline 
\multirow{1}{*}{$(2,5)$} & $m$  & $0.467$ & $0.332$ &  \hl $ \hl 0.324$ & $0.378$ & $0.766$ & $1.12$ & $0.793$ &  \hl $ \hl 0.774$ & $0.906$ & $144$ \\ \hline 
\multirow{1}{*}{$(4,0.5)$} & $m$  & $0.825$ & $0.685$ &  \hl $ \hl 0.675$ & $0.732$ & $0.729$ & $4.02$ & $3.38$ &  \hl $ \hl 3.35$ & $3.6$ & $3.55$ \\ \hline 
\multirow{1}{*}{$(8,4)$} & $m$  & $1.49$ & $1.36$ &  \hl $ \hl 1.35$ & $1.4$ & $1.4$ & $14.3$ &  \hl $ \hl 13.1$ &  \hl $ \hl 13.1$ & $13.5$ & $13.5$ \\ \hline 
\multirow{1}{*}{$(3,3)$} & $m$  & $0.649$ & $0.506$ &  \hl $ \hl 0.496$ & $0.555$ & $0.552$ & $2.27$ & $1.76$ &  \hl $ \hl 1.73$ & $1.94$ & $1.88$ \\ \hline 
\multirow{1}{*}{$(0.8,0.2)$} & $m$  & $0.241$ & $0.121$ &  \hl $ \hl 0.115$ & $0.164$ & $0.148$ & $0.223$ & $0.101$ &  \hl $ \hl 0.097$ & $0.144$ & $0.114$ \\ \hline 
\end{tabular} 
\caption{\protect\label{nakagami_sim_n20} Simulation results for the $N(m,O)$ distribution for $n=20$ and $10,000$ repetitions.}
\end{table}

\begin{table} \small
\centering
\begin{tabular}{cc|ccccc|ccccc}
 $\theta_0$ & & \multicolumn{5}{c}{Bias} & \multicolumn{5}{|c}{MSE} \\  \hline 
 & & $\hat{\theta}_n^{\mathrm{MO2}}$ & $\hat{\theta}_n^{\mathrm{MO3}}$ & $\hat{\theta}_n^{\mathrm{ML}}$ & $\hat{\theta}_n^{\mathrm{ST}}$ & $\hat{\theta}_n^{\mathrm{ST2}}$ & $\hat{\theta}_n^{\mathrm{MO2}}$ & $\hat{\theta}_n^{\mathrm{MO3}}$ & $\hat{\theta}_n^{\mathrm{ML}}$ & $\hat{\theta}_n^{\mathrm{ST}}$ & $\hat{\theta}_n^{\mathrm{ST2}}$ \\  \hline
\multirow{1}{*}{$(1,1)$} & $m$  & $0.109$ & $0.053$ &  \hl $ \hl 0.05$ & $0.072$ & $0.07$ & $0.091$ & $0.042$ &  \hl $ \hl 0.04$ & $0.06$ & $0.044$  \\ \hline 
\multirow{1}{*}{$(0.8,1)$} & $m$  & $0.103$ & $0.047$ &  \hl $ \hl 0.043$ & $0.067$ & $0.063$ & $0.069$ & $0.027$ &  \hl $ \hl 0.026$ & $0.043$ & $0.032$ \\ \hline 
\multirow{1}{*}{$(1.4,0.8)$} & $m$  & $0.14$ & $0.084$ &  \hl $ \hl 0.081$ & $0.103$ & $0.102$ & $0.165$ & $0.092$ &  \hl $ \hl 0.088$ & $0.118$ & $0.098$ \\ \hline 
\multirow{1}{*}{$(3,5)$} & $m$  & $0.25$ & $0.187$ &  \hl $ \hl 0.183$ & $0.208$ & $0.213$ & $0.625$ & $0.451$ &  \hl $ \hl 0.439$ & $0.514$ & $0.474$ \\ \hline 
\multirow{1}{*}{$(3,1)$} & $m$  & $0.243$ & $0.18$ &  \hl $ \hl 0.176$ & $0.202$ & $0.2$ & $0.62$ & $0.458$ &  \hl $ \hl 0.447$ & $0.516$ & $0.493$ \\ \hline 
\multirow{1}{*}{$(2,5)$} & $m$  & $0.175$ & $0.115$ &  \hl $ \hl 0.112$ & $0.135$ & $0.234$ & $0.286$ & $0.187$ &  \hl $ \hl 0.183$ & $0.222$ & $13.3$ \\ \hline 
\multirow{1}{*}{$(4,0.5)$} & $m$  & $0.312$ & $0.249$ &  \hl $ \hl 0.245$ & $0.27$ & $0.268$ & $1.05$ & $0.826$ &  \hl $ \hl 0.811$ & $0.906$ & $0.885$ \\ \hline 
\multirow{1}{*}{$(8,4)$} & $m$  & $0.551$ & $0.484$ &  \hl $ \hl 0.48$ & $0.507$ & $0.506$ & $3.74$ & $3.31$ &  \hl $ \hl 3.28$ & $3.46$ & $3.43$ \\ \hline 
\multirow{1}{*}{$(3,3)$} & $m$  & $0.241$ & $0.18$ &  \hl $ \hl 0.177$ & $0.2$ & $0.201$ & $0.623$ & $0.459$ &  \hl $ \hl 0.449$ & $0.517$ & $0.487$ \\ \hline 
\multirow{1}{*}{$(0.8,0.2)$} & $m$  & $0.098$ & $0.043$ &  \hl $ \hl 0.04$ & $0.063$ & $0.056$ & $0.066$ & $0.027$ &  \hl $ \hl 0.026$ & $0.041$ & $0.028$ \\ \hline 
\end{tabular} 
\caption{\protect\label{nakagami_sim_n50} Simulation results for the $N(m,O)$ distribution for $n=50$ and $10,000$ repetitions.}
\end{table}

\begin{figure}[!]
\centering
\captionsetup{width=.95\textwidth}
\includegraphics[width=7.4cm]{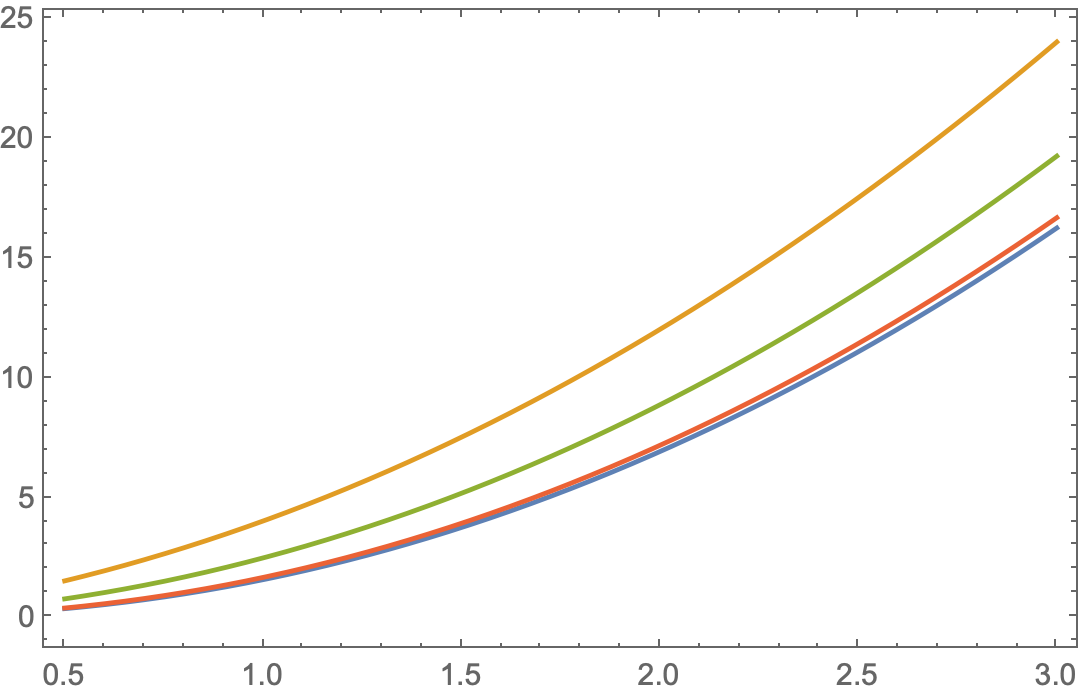}
\caption{\protect\label{fig:nakagami_variance} \it Asymptotic variances of the estimators of the parameter $m$ of the Nakagami distribution. Plotted are the MLE $\hat{m}_n^{\mathrm{ML}}$ \includegraphics[scale=1]{blue}, the moment estimator $\hat{m}_n^{\mathrm{MO2}}$ \includegraphics[scale=1]{orange} \eqref{nakagami_modified_moment_estimators}, moment estimator $\hat{m}_n^{\mathrm{MO3}}$ \includegraphics[scale=1]{red}  \eqref{nakagami_modified_moment_estimators2} and the Stein estimator $\hat{m}_n^{\mathrm{ST}}$ \includegraphics[scale=1]{green} \eqref{nakagami_stein_estimators}. Note the value is independent of $O$.}
\end{figure}

\subsection{One-sided truncated inverse-gamma distribution}
We give another example of how Stein estimators can be used to fit a truncated univariate distibution. The density of the one-sided truncated inverse-gamma distribution on $(a,\infty)$ with $a >0$, denoted by $TIG(\alpha,\beta)$, for $\theta=(\alpha,\beta)$, $\alpha,\beta>0$, is given by
\begin{align*}
    p_{\theta}(x)= \frac{\beta^{\alpha}}{\gamma(\alpha,\beta/a)}  x^{-\alpha-1} \exp\bigg(-\frac{\beta}{x} \bigg), \quad x>a.
\end{align*}
With $\tau_{\theta}(x)=x^2$ we obtain the Stein operator 
\begin{align*}
\mathcal{A}_{\theta}f(x)=(x-\alpha x+\beta)f(x)+x^2f'(x)
\end{align*}
(see also \cite{kl2014}). Considering two test functions $f_1$, $f_2$ yields the estimators
\begin{align*}
\hat{\alpha}_n&=1+\frac{\overline{f_2(X)} \ \overline{X^2f_1'(X)}-\overline{f_1(X)} \ \overline{X^2f_2'(X)}}{\overline{f_2(X)} \ \overline{xf_1(X)}- \overline{f_1(X)}) \ \overline{Xf_2(X)}}, \\
\hat{\beta}_n&=\frac{\overline{X^2f_2'(X)} \ \overline{Xf_1(X)} -\overline{X^2f_1'(X)} \ \overline{Xf_2(X)}}{\overline{f_1(X)}) \ \overline{Xf_2(X)} - \overline{f_2(X)} \ \overline{Xf_1(X)}}.
\end{align*}
Note that coincident with Section \ref{sec_truncnormal}, the normalising constant vanishes. We propose the test functions
\begin{align} \label{truncated_inverse_gamma_stein_estiators}
    f_1(x)= \frac{x-a}{1+(x-a)^2}  \quad \text{and} \quad 
    f_2(x)= \frac{\arctan(x-a)}{(1+x)^2}
\end{align}
and denote the Stein estimator based on the latter two test functions by $\hat{\theta}_n^{\mathrm{ST}}=(\hat{\alpha}_n^{\mathrm{ST}},\hat{\beta}_n^{\mathrm{ST}})$. Regarding other possibilities to estimate $\alpha$ and $\beta$, we contemplate moment estimation, since the first two moments of the $TIG(\alpha,\beta)$-distribution can be calculated and are given by
\begin{align} \label{truncated_inverse_gamma_moment_estimators}
\begin{split}
    \mathbb{E}[X]&=\frac{\beta \gamma(\alpha-1,\beta/a)}{\gamma(\alpha,\beta/a)}, \\
    \mathbb{E}[X^2]&=\frac{\beta^2 \gamma(\alpha-2,\beta/a)}{\gamma(\alpha,\beta/a)}.
\end{split}
\end{align}
Note that the latter expectations only exist for $\alpha>2$. The moment estimator $\hat{\theta}_n^{\mathrm{MO}}=(\hat{\alpha}_n^{\mathrm{MO}},\hat{\beta}_n^{\mathrm{MO}})$ is then defined as the solution for $(\alpha,\beta)$ to \eqref{truncated_inverse_gamma_moment_estimators} with expectations replaced by sample means under the assumption that such a solution exists for a given sample. Moreover, we consider the MLE $\hat{\theta}_n^{\mathrm{ML}}=(\hat{\alpha}_n^{\mathrm{ML}},\hat{\beta}_n^{\mathrm{ML}})$, which is defined as the value for $(\alpha,\beta)$ that maximises the log-likelihood function (again, if such a maximum exists) and is asymptotically efficient in this setting. Due to the high variance of the estimators, we leave out a finite sample simulation study and consider, as in Section \ref{sec_truncnormal}, $q$-confidence regions for two parameter constellations, which are given in Figure \ref{fig:truncinvgamma_variance}. The variance of the moment estimator obtained through \eqref{truncated_inverse_gamma_moment_estimators} is very large and is therefore excluded from the plot. Further, we observe that the Stein estimator \eqref{truncated_inverse_gamma_stein_estiators} is outperformed by the MLE, although variances are still in a reasonable range, not to mention the fact that the Stein estimator is explicit.

\begin{figure}[!]
    \centering
\vspace{.2cm}
\begin{subfigure}{\textwidth}
 \begin{subfigure}{.49\textwidth}
\captionsetup{width=.95\textwidth}
  \centering
  \includegraphics[width=7.4cm]{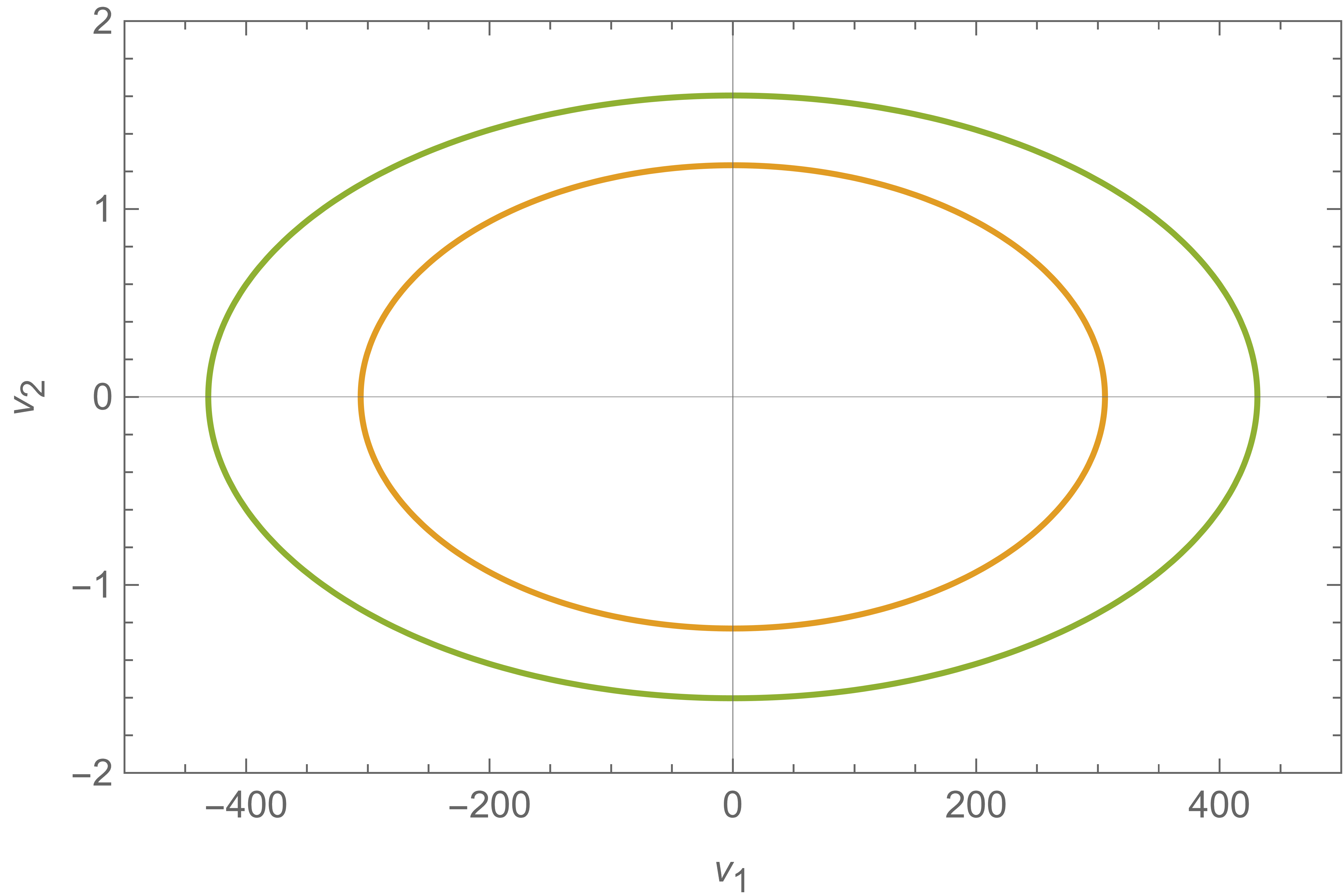}
  \caption{$\alpha=0.5$, $\beta=3$}
\end{subfigure}%
  \begin{subfigure}{.49\textwidth}
\captionsetup{width=.95\textwidth}
  \centering
  \includegraphics[width=7.4cm]{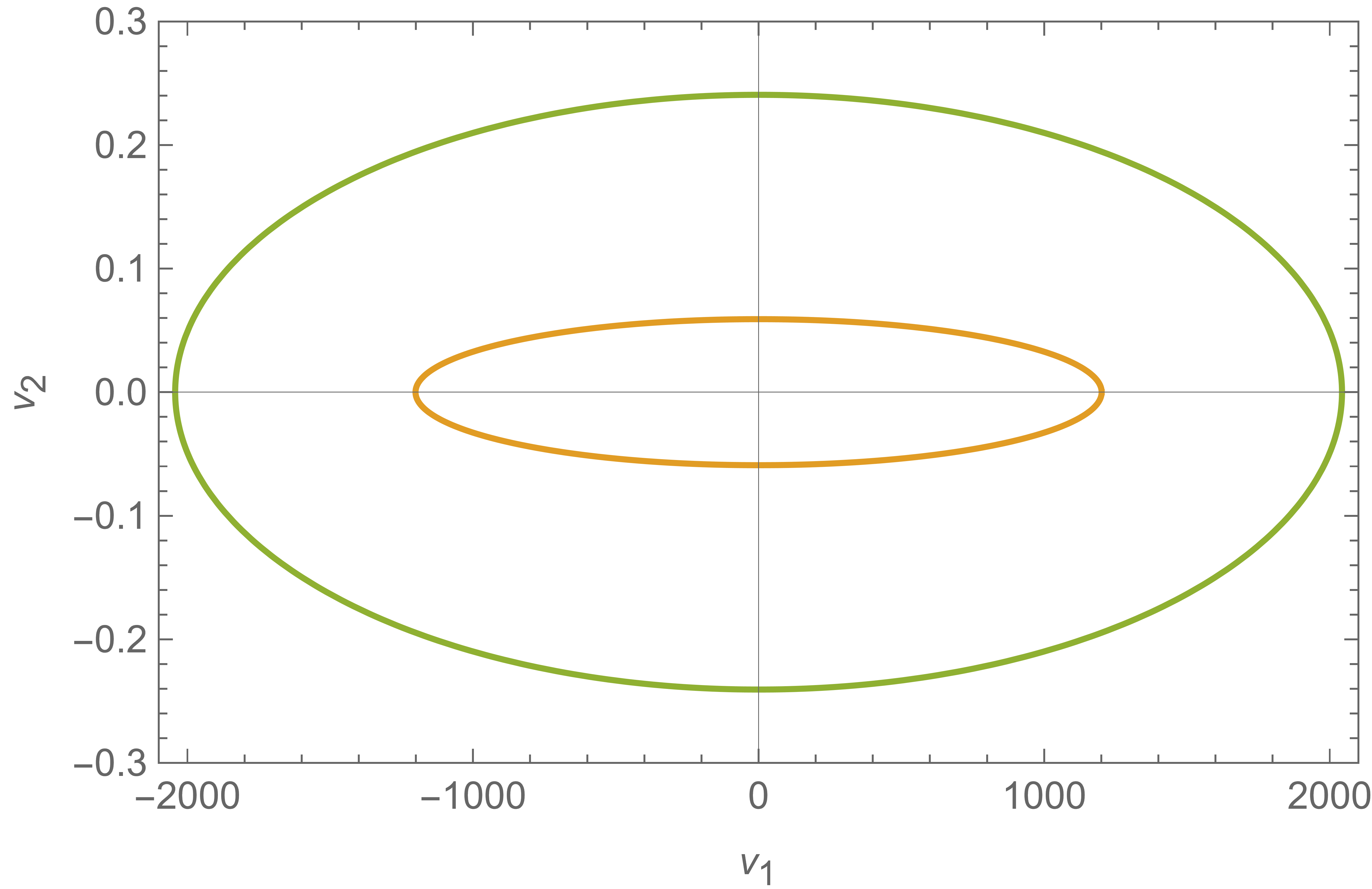}
  \caption{$\alpha=0.1$, $\beta=4$}
\end{subfigure}%
\end{subfigure}%
\caption{\protect\label{fig:truncinvgamma_variance} \it Asymptotic confidence regions for the estimators of the truncated inverse-gamma distribution $TIG(\alpha,\beta)$ for $q=0.95$ and $a=1$ in the directions of the eigenvectors $v_1$ and $v_2$. Plotted are the MLE $\hat{\theta}_n^{\mathrm{ML}}$ \includegraphics[scale=1]{orange} and the Stein estimator $\hat{\theta}_n^{\mathrm{ST}}$ \includegraphics[scale=1]{green}.}
\end{figure}

\subsection{Generalised logistic distribution}
    The density of the generalised logistic distribution $GL(\alpha,\beta)$ with $\theta=(\alpha,\beta)$, $\alpha,\beta>0$, is given by
    \begin{align*}
        p_{\theta}(x)=\frac{1}{B(\alpha,\beta)} \frac{e^{-\beta x}}{(1+e^{-x})^{\alpha+\beta}}, \quad x \in \mathbb{R}.
    \end{align*}
    We choose $\tau_{\theta}(x)=1+e^{x}$ and get
    \begin{align*}
    \mathcal{A}_{\theta}f(x)=\big( \alpha -e^x (\beta-1)\big)f(x) + (1+e^x) f'(x).
\end{align*}
 With two test functions $f_1$, $f_2$ we have the estimators
\begin{align*}
    \hat{\alpha}_n&= \frac{\overline{(1+e^X)f_2'(X)} \ \overline{e^Xf_1(X)} - \overline{(1+e^X)f_1'(X)} \ \overline{e^Xf_2(X)}}{\overline{e^Xf_2(X)} \ \overline{f_1(X)} - \overline{e^Xf_1(X)} \ \overline{f_2(X)}}, \\
    \hat{\beta}_n&= \frac{\Big( \overline{(1+e^X)f_2'(X)} + \overline{e^Xf_2(X)} \Big) \ \overline{f_1(X)} - \Big( \overline{(1+e^X)f_1'(X)} + \overline{e^Xf_1(X)} \Big) \ \overline{f_2(X)} }{\overline{e^Xf_2(X)} \ \overline{f_1(X)} - \overline{e^Xf_1(X)} \ \overline{f_2(X)}}.
\end{align*}
The first two moments are given by
\begin{align*}
    \mathbb{E}[X]=\psi(\alpha)-\psi(\beta) \quad \text{and} \quad \mathbb{E}[X^2]=\psi'(\alpha)+\psi'(\beta) + (\psi(\alpha)-\psi(\beta))^2.
\end{align*}
If existent, the moment estimator $\hat{\theta}_n^{\mathrm{MO}}=(\hat{\alpha}_n^{\mathrm{MO}},\hat{\beta}_n^{\mathrm{MO}})$ is defined as the solution to the empirical versions of the equations above. It is clear that the latter is not explicit. The MLE $\hat{\theta}_n^{\mathrm{ML}}=(\hat{\alpha}_n^{\mathrm{ML}},\hat{\beta}_n^{\mathrm{ML}})$ is defined as the values in $\Theta$, at which the log-likelihood function attains its maximum. Supposing the latter exists and is uniquely determined by the critical point of the derivative, the MLE is the solution to the equations
\begin{align*}
    \overline{\log(1+e^{-X})}=\psi(\alpha)-\psi(\alpha+\beta) \quad \text{and} \quad \overline{X} + \overline{\log(1+e^{-X})}=\psi(\beta)-\psi(\alpha+\beta).
\end{align*}
The Fisher information matrix is given by
\begin{align*}
    I_{\mathrm{ML}}(\alpha, \beta)=
    \begin{pmatrix}
        \psi'(\alpha) - \psi'(\alpha+\beta) & -\psi'(\alpha+\beta) \\
        -\psi'(\alpha+\beta) &  \psi'(\beta)- \psi'(\alpha+\beta)
    \end{pmatrix}.
\end{align*}
Considering the Stein estimator $\hat{\theta}_n^{\mathrm{ST}}=(\hat{\alpha}_n^{\mathrm{ST}}, \hat{\beta}_n^{\mathrm{ST}})$, we choose the test functions $f_1(x)=1/(1+e^x)$ and $f_2(x)=1/(1+e^x)^2$ such that Assumptions \ref{general_assumptions}(a)--(b) are easily verified. The results of the simulation study can be found in Tables \ref{genlog_sim_n20} and \ref{genlog_sim_n50}. In order to calculate the moment estimators $\hat{\alpha}_n^{\mathrm{MO}}$ and $\hat{\beta}_n^{\mathrm{MO}}$, we employed the R function \texttt{nleqslv} from the R package \textit{nleqslv} \cite{hasselman2023nleqslv} with initial values $(1,1)$. The MLE are obtained through numerical optimisation of the log-likelihood function (as well with initial guess $(1,1)$). Interestingly, the MLE -- although asymptotically efficient -- is outperformed by the moment estimators for most parameter constellations for sample sizes $n=20$ and $n=50$. The Stein estimator is competitive, in terms of MSE, mostly residing between the MLE and moment estimator. However, the Stein estimator is the only one among the considered estimation techniques that is completely explicit, and does not seem to generate estimates that lie outside of the parameter space, as can be seen in column \textit{NE}.

\begin{table}[h] \small
\centering
\begin{tabular}{cc|ccc|ccc|ccc}
 $\theta_0$ & & \multicolumn{3}{|c}{Bias} & \multicolumn{3}{|c}{MSE} & \multicolumn{3}{|c}{NE} \\ \hline
 & & $\hat{\theta}_n^{\mathrm{ML}}$ & $\hat{\theta}_n^{\mathrm{MO}}$  & $\hat{\theta}_n^{\mathrm{ST}}$ & $\hat{\theta}_n^{\mathrm{ML}}$ & $\hat{\theta}_n^{\mathrm{MO}}$ & $\hat{\theta}_n^{\mathrm{ST}}$ & $\hat{\theta}_n^{\mathrm{ML}}$  & $\hat{\theta}_n^{\mathrm{MO}}$  & $\hat{\theta}_n^{\mathrm{ST}}$ \\ \hline
\multirow{2}{*}{$(1,1)$} & $\alpha$  & 0.182 & 0.157 & \hl 0.132 & 0.195 & \hl 0.18 & 0.189 & \multirow{2}{*}{0} & \multirow{2}{*}{0} & \multirow{2}{*}{0} \\ & $\beta$  & 0.178 & 0.153 & \hl 0.129 & 0.196 & \hl 0.181 & 0.191\\ \hline 
\multirow{2}{*}{$(0.4,1)$} & $\alpha$  & 0.083 & \hl 0.05 & 0.055 & 0.027 & \hl 0.021 & 0.034 & \multirow{2}{*}{9} & \multirow{2}{*}{0} & \multirow{2}{*}{0} \\ & $\beta$  & 0.311 & 0.19 & \hl 0.18 & 0.442 & \hl 0.268 & 0.35\\ \hline 
\multirow{2}{*}{$(0.8,1.5)$} & $\alpha$  & 0.136 & 0.115 & \hl 0.111 & 0.114 & \hl 0.105 & 0.126 & \multirow{2}{*}{0} & \multirow{2}{*}{0} & \multirow{2}{*}{0} \\ & $\beta$  & 0.314 & 0.259 & \hl 0.239 & 0.572 & \hl 0.49 & 0.551\\ \hline 
\multirow{2}{*}{$(3,1)$} & $\alpha$  & 0.662 & \hl 0.551 & 0.592 & 2.51 & \hl 2.15 & 2.73 & \multirow{2}{*}{0} & \multirow{2}{*}{0} & \multirow{2}{*}{0} \\ & $\beta$  & 0.171 & \hl 0.147 & 0.165 & 0.183 & \hl 0.166 & 0.22\\ \hline 
\multirow{2}{*}{$(0.2,3)$} & $\alpha$  & 0.094 & 0.021 & \hl 0.08 & 0.016 & $ \hl 4\text{\hl e-3}$ & 0.024 & \multirow{2}{*}{81} & \multirow{2}{*}{0} & \multirow{2}{*}{0} \\ & $\beta$  & 3.46 & \hl 1.23 & 2.34 & 44.3 & \hl 10.3 & 28.1\\ \hline 
\multirow{2}{*}{$(4,6)$} & $\alpha$  & 0.71 & 0.686 & \hl 0.651 & 3.39 & 3.34 & \hl 3.33 & \multirow{2}{*}{0} & \multirow{2}{*}{0} & \multirow{2}{*}{0} \\ & $\beta$  & 1.1 & 1.06 & \hl 1.01 & 7.92 & 7.79 & \hl 7.76\\ \hline 
\multirow{2}{*}{$(7,0.5)$} & $\alpha$  & 2.98 & \hl 1.7 & 2.88 & 35.4 & \hl 18.6 & 38.3 & \multirow{2}{*}{21} & \multirow{2}{*}{0} & \multirow{2}{*}{0} \\ & $\beta$  & 0.102 & \hl 0.064 & 0.142 & 0.047 & \hl 0.032 & 0.09\\ \hline 
\multirow{2}{*}{$(0.2,8)$} & $\alpha$  & 0.108 & \hl 0.021 & 0.104 & 0.02 & $ \hl 4\text{\hl e-3}$ & 0.031 & \multirow{2}{*}{90} & \multirow{2}{*}{0} & \multirow{2}{*}{0} \\ & $\beta$  & 10.1 & \hl 3.28 & 7.95 & 333 & \hl 61.5 & 297\\ \hline 
\multirow{2}{*}{$(3,10)$} & $\alpha$  & 0.511 & \hl 0.48 & 0.501 & 1.77 & \hl 1.71 & 1.87 & \multirow{2}{*}{0} & \multirow{2}{*}{0} & \multirow{2}{*}{0} \\ & $\beta$  & 1.85 & \hl 1.73 & 1.79 & 22.6 & \hl 21.7 & 23.6\\ \hline 
\multirow{2}{*}{$(0.5,0.4)$} & $\alpha$  & 0.118 & 0.075 & \hl 0.062 & 0.065 & \hl 0.046 & 0.056 & \multirow{2}{*}{10} & \multirow{2}{*}{0} & \multirow{2}{*}{0} \\ & $\beta$  & 0.083 & 0.051 & \hl 0.044 & 0.029 & \hl 0.024 & 0.032\\ \hline 
\end{tabular} 
\caption{\protect\label{genlog_sim_n20} Simulation results for the $GL(\alpha,\beta)$ distribution for $n=20$ and $10,000$ repetitions.}
\end{table}

\begin{table}[h]  \small
\centering
\begin{tabular}{cc|ccc|ccc|ccc}
 $\theta_0$ & & \multicolumn{3}{|c}{Bias} & \multicolumn{3}{|c}{MSE} & \multicolumn{3}{|c}{NE} \\ \hline
 & & $\hat{\theta}_n^{\mathrm{ML}}$ & $\hat{\theta}_n^{\mathrm{MO}}$  & $\hat{\theta}_n^{\mathrm{ST}}$ & $\hat{\theta}_n^{\mathrm{ML}}$ & $\hat{\theta}_n^{\mathrm{MO}}$ & $\hat{\theta}_n^{\mathrm{ST}}$ & $\hat{\theta}_n^{\mathrm{ML}}$  & $\hat{\theta}_n^{\mathrm{MO}}$  & $\hat{\theta}_n^{\mathrm{ST}}$ \\ \hline
\multirow{2}{*}{$(1,1)$} & $\alpha$  & 0.065 & 0.055 & \hl 0.046 & 0.048 & \hl 0.045 & 0.053 & \multirow{2}{*}{0} & \multirow{2}{*}{0} & \multirow{2}{*}{0} \\ & $\beta$  & 0.064 & 0.053 & 0.044 & 0.048 & 0.046 & 0.054\\ \hline 
\multirow{2}{*}{$(0.4,1)$} & $\alpha$  & 0.03 & \hl 0.018 & 0.02 & $7.16\text{e-3}$ & $ \hl 5.75\text{\hl e-3}$ & 0.01 & \multirow{2}{*}{6} & \multirow{2}{*}{0} & \multirow{2}{*}{0} \\ & $\beta$  & 0.119 & 0.065 & \hl 0.06 & 0.095 & \hl 0.062 & 0.08\\ \hline 
\multirow{2}{*}{$(0.8,1.5)$} & $\alpha$  & 0.051 & 0.042 & \hl 0.04 & 0.029 & \hl 0.027 & 0.035 & \multirow{2}{*}{0} & \multirow{2}{*}{0} & \multirow{2}{*}{0} \\ & $\beta$  & 0.12 & 0.096 & \hl 0.088 & 0.14 & \hl 0.121 & 0.14\\ \hline 
\multirow{2}{*}{$(3,1)$} & $\alpha$  & 0.253 & \hl 0.202 & 0.214 & 0.627 & \hl 0.524 & 0.65 & \multirow{2}{*}{0} & \multirow{2}{*}{0} & \multirow{2}{*}{0} \\ & $\beta$  & 0.064 & \hl 0.052 & 0.059 & 0.047 & \hl 0.042 & 0.058\\ \hline 
\multirow{2}{*}{$(0.2,3)$} & $\alpha$  & 0.038 & $ \hl 6.98\text{\hl e-3}$ & 0.029 & $2.98\text{e-3}$ & $ \hl 1.14\text{\hl e-3}$ & $6.51\text{e-3}$ & \multirow{2}{*}{86} & \multirow{2}{*}{0} & \multirow{2}{*}{0} \\ & $\beta$  & 1.48 & \hl 0.396 & 0.715 & 7.92 & \hl 1.6 & 3.52\\ \hline 
\multirow{2}{*}{$(4,6)$} & $\alpha$  & 0.237 & 0.229 & \hl 0.218 & 0.792 & \hl 0.783 & 0.804 & \multirow{2}{*}{0} & \multirow{2}{*}{0} & \multirow{2}{*}{0} \\ & $\beta$  & 0.369 & 0.356 & \hl 0.337 & 1.85 & \hl 1.82 & 1.85\\ \hline 
\multirow{2}{*}{$(7,0.5)$} & $\alpha$  & 1.09 & \hl 0.581 & 1.03 & 8.3 & \hl 4.25 & 8.44 & \multirow{2}{*}{18} & \multirow{2}{*}{0} & \multirow{2}{*}{0} \\ & $\beta$  & 0.03 & \hl 0.023 & 0.056 & 0.011 & $ \hl 8.57\text{\hl e-3}$ & 0.025\\ \hline 
\multirow{2}{*}{$(0.2,8)$} & $\alpha$  & 0.048 & $ \hl 8.29\text{\hl e-3}$ & 0.041 & $4.97\text{e-3}$ & $ \hl 1.17\text{\hl e-3}$ & $8.38\text{e-3}$ & \multirow{2}{*}{94} & \multirow{2}{*}{0} & \multirow{2}{*}{0} \\ & $\beta$  & 3.99 & \hl 1.21 & 2.6 & 70.2 & \hl 13 & 33.6\\ \hline 
\multirow{2}{*}{$(3,10)$} & $\alpha$  & 0.186 & \hl 0.173 & 0.185 & 0.463 & \hl 0.443 & 0.505 & \multirow{2}{*}{0} & \multirow{2}{*}{0} & \multirow{2}{*}{0} \\ & $\beta$  & 0.666 & \hl 0.618 & 0.652 & 5.8 & \hl 5.52 & 6.16\\ \hline 
\multirow{2}{*}{$(0.5,0.4)$} & $\alpha$  & 0.041 & 0.025 & \hl 0.021 & 0.013 & \hl 0.011 & 0.016 & \multirow{2}{*}{6} & \multirow{2}{*}{0} & \multirow{2}{*}{0} \\ & $\beta$  & 0.031 & 0.018 & \hl 0.015 & $7.09\text{e-3}$ & $ \hl 6.22\text{\hl e-3}$ & $9.7\text{e-3}$\\ \hline 
\end{tabular} 
\caption{\protect\label{genlog_sim_n50} Simulation results for the $GL(\alpha,\beta)$ distribution for $n=50$ and $10,000$ repetitions.}
\end{table}

\subsection{A non-i.i.d.\ example} \label{cauchy_noniid}
We revisit Section \ref{cauchysec} and construct a sequence of strictly stationary, ergodic but non-i.i.d.\  random variables whose marginal probability distribution is $C(\mu,\gamma)$. Let $\epsilon_t$, $t \in \mathbb{Z}$, be a sequence of i.i.d.\ random variables with $\epsilon_t \sim C(\mu,\gamma)$ for all $t \in \mathbb{Z}$. We then define 
\begin{align*}
    X_n= \frac{1}{q+1} \bigg( \sum_{i=1}^q \epsilon_{n-i} + \epsilon_n \bigg), \quad n \in \mathbb{Z},
\end{align*}
and conclude that the marginal distribution of each $X_n$ is $C(\mu,\gamma)$ since the mean of i.i.d.\ Cauchy random variables is again Cauchy. Observe that $\{X_n,n\in \mathbb{Z}\}$ can be seen as a $\mathrm{MA}(q)$-process with i.i.d.\ Cauchy white noise. It is obvious that the sequence $X_n$, $n \in \mathbb{Z}$, is strictly stationary. We show that it is also ergodic. It is clear that $X_n$, $n \in \mathbb{Z}$, is $m$-dependent, i.e.\ $X_i$ and $X_j$ are independent if $\vert i-j \vert \geq m$ with $m=q+1$. Therefore, $X_{n+mj}$, $j \in \mathbb{Z}$ is i.i.d. for any $n \in \mathbb{Z}$. In the sequel, we shall write $\sigma(\cdot)$ for the generated $\sigma$-algebra. Let $A \in \mathcal{A}$ be an invariant event belonging to $\sigma(X_1,X_2,\ldots)$, i.e. $\zeta(A)=A$. Then we have that 
\begin{align*}
A \in \bigcap_{n \geq 1} \sigma(X_n,X_{n+1},\ldots) \subset \sigma\bigg( \bigcup_{j=1}^{m-1} \bigcap_{n \geq 0} \sigma(X_{(n+k)m+j},k \geq 0) \bigg).
\end{align*}
Now for each $j=0,\ldots,m-1$ we can interpret $\mathcal{A}_j=\bigcap_{n \geq 0} \sigma(X_{(n+k)m+j},k \geq 0)$ as the terminal $\sigma$-algebra of an i.i.d.\ process and with Kolmogorov's $0$-$1$ law we conclude $\mathbb{P}(A') \in\{0,1\} $ for each element $A' \in \mathcal{A}_j$ for any $j=1,\ldots,m-1$. Since $A \in \sigma\big( \bigcup_{j=1}^{m-1} \mathcal{A}_j \big)$ we can write $A$ as a finite intersection/union of elements in $\bigcup_{j=1}^{m-1} \mathcal{A}_j$. We conclude $\mathbb{P}(A) \in \{0,1\}$. We can apply the same argument to an invariant set $A \in \sigma(X_0,X_{-1},\ldots)$ and hence obtain ergodicity. \par
We include pseudo-MLE estimation in our simulation study, that is the maximum of the log-likelihood function of the data $X_1,\ldots,X_n$ as if it was i.i.d. Hence the estimator is defined through
\begin{align*}
    \hat{\theta}_n^{\mathrm{ML}} = \argmax_{\theta \in \Theta} \sum_{i=1}^n \log p_{\theta}(X_i). 
\end{align*}
Note that this procedure still yields a consistent estimator since the ergodic theorem also applies to the score function but the estimator is in general not asymptotically efficient. \par
Next, we consider the median estimator $\hat{\theta}_n^{\mathrm{MED}}$, which is defined as
\begin{align*}
    \hat{\mu}_n^{\mathrm{MED}}=\mathrm{Med}(X_1,\dots,X_n), \quad \hat{\gamma}_n^{\mathrm{MED}}=\mathrm{Med}\big(\vert X_1-  \hat{\mu}_n^{\mathrm{MED}} \vert,\ldots,\vert X_n-  \hat{\mu}_n^{\mathrm{MED}} \vert\big),
\end{align*}
where $\mathrm{Med}(\cdot)$ denotes the empirical median of a sample $X_1,\ldots, X_n$. The estimator for $\gamma$ is motivated by the fact that the mode of $\vert X \vert$ is equal to $\gamma$, where $X \sim C(0,\gamma)$. \par
We use the Stein estimator $\hat{\theta}_n^{\mathrm{ST}}$, as worked out in Section \ref{cauchysec}, with the optimal functions from \eqref{optimal_func_cauchy}. As a first step estimate we employ $\hat{\theta}_n^{\mathrm{MED}}$ (note that the latter is consistent and asymptotically normal with rate $\sqrt{n}$). With Remark \ref{remark_extension_ergodic_optimal functions} the Theorems \ref{theorem_consistency_two_step} and \ref{theorem_efficiency_optimal_functions} are still applicable and we deduce consistency as well as asymptotic normality. \par
Simulation results are reported in Table \ref{cauchynoniid_sim}, whereby we chose the same parameter constellations as in Section \ref{cauchysec}. We observe that $\hat{\theta}_n^{\mathrm{ST}}$ significantly outperforms $\hat{\theta}_n^{\mathrm{MED}}$ and shows a very similar behaviour to $\hat{\theta}_n^{\mathrm{ML}}$.

\begin{table}[h] \small
\centering
\begin{tabular}{cc|ccc|ccc}
 $\theta_0$ & & \multicolumn{3}{c|}{Bias} & \multicolumn{3}{c}{MSE} \\  \hline
 & & $\hat{\theta}_n^{\mathrm{ML}}$ & $\hat{\theta}_n^{\mathrm{MED}}$  & $\hat{\theta}_n^{\mathrm{ST}}$ & $\hat{\theta}_n^{\mathrm{ML}}$ & $\hat{\theta}_n^{\mathrm{MED}}$ & $\hat{\theta}_n^{\mathrm{ST}}$ \\ \hline
\multirow{2}{*}{$(-5,1)$} & $\mu$  & $2.55\text{e-3}$ & $ \hl 2.42\text{\hl e-3}$ & $2.79\text{e-3}$ & $ \hl 0.07$ & $0.085$ & $0.071$ \\ & $\gamma$ & $ \hl 7.87\text{\hl e-3}$ & $0.026$ & $7.97\text{e-3}$ & $ \hl 0.07$ & $0.078$ & $0.072$ \\ \hline 
\multirow{2}{*}{$(-4,1.5)$} & $\mu$  & $ \hl 1.15\text{\hl e-3}$ & $-1.27\text{e-3}$ & $1.25\text{e-3}$ & $ \hl 0.155$ & $0.192$ & $0.159$ \\ & $\gamma$ & $ \hl 0.023$ & $0.046$ & $0.024$ & $ \hl 0.156$ & $0.173$ & $0.16$ \\ \hline 
\multirow{2}{*}{$(-2,2)$} & $\mu$  & $8.54\text{e-3}$ & $ \hl 7.03\text{\hl e-3}$ & $9.96\text{e-3}$ & $ \hl 0.273$ & $0.34$ & $0.279$ \\ & $\gamma$ & $0.021$ & $0.052$ & $0.022$ & $0.28$ & $0.31$ & $0.285$ \\ \hline 
\multirow{2}{*}{$(0,1)$} & $\mu$  & $ \hl -6.51\text{\hl e-3}$ & $-7.74\text{e-3}$ & $-6.82\text{e-3}$ & $ \hl 0.069$ & $0.084$ & $0.07$ \\ & $\gamma$ & $ \hl 0.015$ & $0.033$ & $0.016$ & $ \hl 0.071$ & $0.079$ & $0.073$ \\ \hline 
\multirow{2}{*}{$(0,3)$} & $\mu$  & $-2.51\text{e-3}$ & $-3.51\text{e-3}$ & $ \hl -1.26\text{\hl e-3}$ & $ \hl 0.628$ & $0.762$ & $0.641$ \\ & $\gamma$ & $ \hl 0.023$ & $0.071$ & $0.024$ & $ \hl 0.613$ & $0.681$ & $0.626$ \\ \hline 
\multirow{2}{*}{$(2,0.1)$} & $\mu$  & $3.33\text{e-4}$ & $ \hl 2.81\text{\hl e-4}$ & $3.68\text{e-4}$ & $ \hl 6.85\text{\hl e-4}$ & $8.35\text{e-4}$ & $7.01\text{e-4}$ \\ & $\gamma$ & $ \hl 1.57\text{\hl e-3}$ & $3.28\text{e-3}$ & $1.61\text{e-3}$ & $ \hl 7.28\text{\hl e-4}$ & $8.15\text{e-4}$ & $7.47\text{e-4}$ \\ \hline 
\multirow{2}{*}{$(2,0.5)$} & $\mu$  & $-1.35\text{e-3}$ & $ \hl -7.11\text{\hl e-4}$ & $-1.22\text{e-3}$ & $ \hl 0.018$ & $0.022$ & $ \hl 0.018$ \\ & $\gamma$ & $ \hl 7.1\text{\hl e-3}$ & $0.016$ & $7.17\text{e-3}$ & $ \hl 0.018$ & $0.02$ & $ \hl 0.018$ \\ \hline 
\multirow{2}{*}{$(4,0.8)$} & $\mu$  & $-7.25\text{e-4}$ & $-1.01\text{e-3}$ & $ \hl -6.78\text{\hl e-4}$ & $ \hl 0.045$ & $0.055$ & $0.046$ \\ & $\gamma$ & $ \hl 0.012$ & $0.026$ & $0.013$ & $ \hl 0.045$ & $0.05$ & $0.047$ \\ \hline 
\multirow{2}{*}{$(6,2.3)$} & $\mu$  & $ \hl 5.6\text{\hl e-3}$ & $9.03\text{e-3}$ & $6.03\text{e-3}$ & $ \hl 0.373$ & $0.452$ & $0.379$ \\ & $\gamma$ & $ \hl 0.023$ & $0.06$ & $ \hl 0.023$ & $ \hl 0.363$ & $0.403$ & $0.372$ \\ \hline 
\multirow{2}{*}{$(10,0.2)$} & $\mu$  & $ \hl 3\text{\hl e-4}$ & $4.3\text{e-5}$ & $3.09\text{e-4}$ & $ \hl 2.74\text{\hl e-3}$ & $3.37\text{e-3}$ & $2.81\text{e-3}$ \\ & $\gamma$ & $ \hl 1.42\text{\hl e-3}$ & $4.92\text{e-3}$ & $1.43\text{e-3}$ & $ \hl 2.7\text{\hl e-3}$ & $3.05\text{e-3}$ & $2.74\text{e-3}$ \\ \hline 
\end{tabular} 
\caption{\protect\label{cauchynoniid_sim} Simulation results for the $\mathrm{MA}(q)$-process with marginal $C(\mu,\gamma)$ distribution for $n=150$, $q=5$ and $10,000$ repetitions.}
\end{table}

\section{Proofs}

\subsection{Proof of Theorem \ref{theorem_stein_operator}}

\begin{proof}
The proof follows along the lines of \cite[Theorem 2.2]{ley2013stein}. To see sufficiency, note that
\begin{align*}
    \mathbb{E}[\mathcal{A}_{\theta}f(X)]= \int_a^b \mathcal{A}_{\theta}f(x) p_{\theta}(x)\,dx = \int_a^b \big( f(x) \tau_{\theta}(x) p_{\theta}(x)\big)'\,dx =0,
\end{align*}
where we used that $f \in \mathscr{F}_{\theta}$ in the last step. For the necessity, we define $l_z(x)=1_{(a,z)}(x) - P_{\theta}(z)$ for $z, x \in (a,b)$,
where $P_{\theta}$ is the CDF with respect to $\mathbb{P}_{\theta}$. We obviously have $\mathbb{E}[l_z(X)]=0$ for all $z \in (a,b)$ if $X \sim \mathbb{P}_{\theta}$. Then the function
\begin{align*}
    f_z(x)=\frac{1}{p_{\theta}(x) \tau_{\theta}(x)} \int_a^x l_z(u)p_{\theta}(u)\,du
\end{align*}
belongs to $\mathscr{F}_{\theta}$ for all $z \in (a,b)$ and satisfies the equation $\mathcal{A}_{\theta} f_z(x)=l_z(x)$ for all $x,z \in (a,b)$. Hence, for each $z \in (a,b)$ we have
\begin{align*}
    0= \mathbb{E}[l_z(X)] =\mathbb{E}[\mathcal{A}_{\theta} f_z(X)] = \mathbb{P}(X \leq z)-P_{\theta}(z),
\end{align*}
and it follows that $X \sim \mathbb{P}_{\theta}.$
\end{proof}

\subsection{Proof of Theorem \ref{theorem_consistency}}

\begin{proof}
Let $X \sim \mathbb{P}_{\theta_0}$. Define the function $F(M,\theta)=Mg(\theta)$, where $M \in \mathbb{R}^{p \times q}$ and $g$ is as in Assumption \ref{general_assumptions}(b). Then $F$ is continuously differentiable on $\mathbb{R}^{p \times q} \times \Theta$. By Assumption \ref{general_assumptions}(a), we have $F(\mathbb{E}[M(X)],\theta_0)=0$ and by Assumption \ref{general_assumptions}(b) the Jacobian $\mathbb{E}[M(X)]\frac{\partial}{\partial \theta} g(\theta) \vert_{\theta=\theta_0}$ is invertible. Now the implicit function theorem implies that there are neighbourhoods $U \subset \mathbb{R}^{p \times q}$, $V \subset \mathbb{R}^p$ of $\mathbb{E}[M(X)]$ and $\theta_0$ such that there exists a continuously differentiable function $h:U \rightarrow V$ with $F(M,h(M))=0$ for all $M \in U$. By an application of the ergodic theorem (see for example \cite[Theorem 20.14]{klenke2013probability}),
\begin{align*}
    \frac{1}{n} \sum_{i=1}^n M(X_i) \overset{\mathrm{a.s.}}{\longrightarrow} \mathbb{E}[M(X)].
\end{align*}
Thus, by defining $A_n=\big\{n^{-1} \sum_{i=1}^n M(X_i) \in U \big\}$ we have $\mathbb{P}(A_n) \rightarrow 1$ as $n \rightarrow \infty$ and a solution $\hat{\theta}_n$ to \eqref{stein_equation_system} (in $V$) exists for each $\omega \in A_n$ since $\hat{\theta}_n=h\big(n^{-1} \sum_{i=1}^n M(X_i)\big)$. Measurability is clear and the consistency part now follows from the continuous mapping theorem.
\end{proof}

\begin{Remark} \label{remark_hansen_cons}
Although our estimators can be seen as generalised method of moments estimators, in our setting it is not convenient to apply the consistency proof of \cite{hansen1982large} (see \cite{hansen2012proofs} for a more concrete derivation), which is based on a compactification of the parameter space. To see this, we revisit Example \ref{example_gaussian} and choose $g(\theta)=(\mu,\sigma^2,1)^\top$. Let $\Theta^\star$ be the closure of the bounded set
 \begin{align*}
     \Theta'=\bigg\{ \frac{g(\theta)}{\sqrt{1+\Vert g(\theta) \Vert^2}} \ \vert \ \theta \in \Theta \bigg\}.
 \end{align*}
Then $\Theta^\star =[-1,1] \times [0,1] \times [0,1/\sqrt{2}]$. We have
\begin{align*}
    M(x)= 
\begin{pmatrix}
    f_1(x) & f_1'(x) & -xf_1(x) \\
    f_2(x) & f_2'(x) & -xf_2(x)
\end{pmatrix},
\end{align*}
and now $\mathbb{E}[M(X)\theta]=0$ for $X \sim \mathbb{P}_{\theta_0}$ and some $\theta \in \Theta^\star$ does not imply $\theta=g(\theta_0)/\sqrt{1+\Vert g(\theta_0) \Vert^2}$, since $0 \in \Theta^\star$.
\end{Remark}

\subsection{Proof of Theorem \ref{theorem_asymptotic_normality}}

We remind the reader of the Kronecker product for two matrices $M \in \mathbb{R}^{p_1 \times q_2}$ and $N \in \mathbb{R}^{p_2 \times q_2} $ defined by
\begin{align*}
    M \otimes N =
    \begin{pmatrix}
        m_{11} N & \ldots & m_{1q_1} N \\
        \vdots & \ddots & \vdots \\
        m_{p_11} N & \ldots & m_{p_1q_1} N
    \end{pmatrix} \in \mathbb{R}^{p_1p_2 \times q_1q_2}.
\end{align*}

\noindent \emph{Proof of Theorem \ref{theorem_asymptotic_normality}.} We remark that it is enough to show weak convergence of the probability measures that are obtained when conditioning on $A_n$, since for any measurable set $B \subset \mathbb{R}^p$ we have
\begin{align*}
    \mathbb{P}(\hat{\theta}_n \in B) =  \mathbb{P}(\hat{\theta}_n \in B \, \vert \, A_n)  \mathbb{P}(A_n) + \mathbb{P}(\hat{\theta}_n \in B \, \vert \, \bar{A}_n)  \mathbb{P}(\bar{A}_n),
\end{align*}
where $\bar{A}$ denotes the complement of the set $A$ in $\mathbb{R}^p$. The second part of the last expression converges to $0$ and $ \mathbb{P}(A_n)$ converges to $1$, therefore it is enough to show that the random vector $\hat{\theta}_n \, \vert \, A_n$ ($\hat{\theta}_n$ conditioned on the events $A_n$) is asymptotically normal. On the sets $A_n$, we have $\hat{\theta}_n=h\big(n^{-1} \sum_{i=1}^n M(X_i)\big)$, where the function $h$ is defined in the proof of Theorem \ref{theorem_consistency}. By stacking up the columns, we consider the function $h$ to be defined on $\mathbb{R}^{pq}$. Theorem \ref{ergodic_central_limit_theorem} yields that
\begin{align*}
    \sqrt{n}\bigg(\frac{1}{n} \sum_{i=1}^n \mathrm{vec}(M(X_i)) - \mathbb{E}[\mathrm{vec}(M(X))]\bigg) \stackrel{D}{\longrightarrow} N(0,\Lambda),
\end{align*}
and 
\begin{align*}
\Lambda=\sum_{j \in \mathbb{Z}}\mathbb{E}\Big[\big(\mathrm{vec}(M(X_0))-\mathbb{E}[\mathrm{vec}(M(X))]\big)\big(\mathrm{vec}(M(X_{j}))-\mathbb{E}[\mathrm{vec}(M(X))]\big)^{\top}\Big]. 
\end{align*}
Next we calculate the derivative of $h$, which, by the implicit function theorem, is given by
\begin{align*}
    h'(\mathrm{vec}(M))= -\bigg( M\frac{\partial}{\partial \theta} g(\theta) \bigg)^{-1} M g(\theta) = -\bigg( M\frac{\partial}{\partial \theta} g(\theta) \bigg)^{-1} ((g_1(\theta), \ldots, g_q(\theta)) \otimes \mathrm{I}_{p})\mathrm{vec}(M).
\end{align*}
Since $n^{-1} \sum_{i=1}^n \mathrm{vec}(M(X_i))$ converges to $\mathbb{E}[\mathrm{vec}(M(X))]$ in probability and $h(\mathbb{E}[\mathrm{vec}(M(X))])=\theta_0$, an application of the delta method yields 
\begin{align*}
    \sqrt{n}(\hat{\theta}_n-\theta_0) \stackrel{D}{\longrightarrow} N\big(0,G^{-1} \big( (g_1(\theta), \ldots, g_q(\theta)) \otimes I_d \big) \, \Lambda  \,\big((g_1(\theta), \ldots, g_q(\theta)) \otimes I_d \big)^\top  G^{-\top}\big),
\end{align*}
where $G$ is defined in the statement of the theorem. Noting that 
\begin{align*}
    \big( (g_1(\theta), \ldots, g_q(\theta)) \otimes I_d \big) \mathrm{vec}(M(x)) = M(x)g(\theta)=\mathcal{A}_{\theta}f(x)
\end{align*}
gives the claim. \hfill $\Box$

\begin{Remark}
    Recall that $\hat{\theta}_n$ can be seen as a generalised method of moments estimator. The asymptotic normality result above could also be obtained through an application of \cite[Theorem 3.1]{hansen1982large}. For that, one has to show that the function $x \mapsto \frac{\partial}{\partial \theta }\mathcal{A}_{\theta}f(x)$ is first moment continuous, that is
    \begin{align*}
        \lim_{\delta \rightarrow 0} \mathbb{E}\bigg[ \sup_{\theta' \in \Theta} \bigg\{\bigg\Vert \frac{\partial}{\partial \theta }\mathcal{A}_{\theta}f(X) \Big\vert_{\theta=\theta'} -\frac{\partial}{\partial \theta}\mathcal{A}_{\theta}f(X)\Big\vert_{\theta=\theta_0} \bigg\Vert \, \bigg\vert \, \Vert \theta'-\theta_0 \Vert < \delta \bigg\} \bigg]=0
    \end{align*}
    for $X \sim \mathbb{P}_{\theta_0}$, which is clear with Assumption \ref{general_assumptions}(b) by choosing a submultiplicative matrix norm $\Vert \cdot \Vert$ in the formula above. However, the setting in \cite{hansen1982large} is more general and therefore requires more effort than needed in our proof above in order to establish the result.
\end{Remark}

\subsection{Proof of Theorem \ref{theorem_consistency_two_step}}

\begin{proof}
Let $X \sim \mathbb{P}_{\theta_0}$. Define the function $F(M,\theta)=Mg(\theta)$, where $M \in \mathbb{R}^{p \times q}$ and $g$ is defined as in Assumption \ref{assumptions_optimal_func}(c). Then $F$ is continuously differentiable on $\mathbb{R}^{p \times q} \times \Theta$ and we have $F(\mathbb{E}[M_{\theta_0}(X)],\theta_0)=0$ by Assumption \ref{assumptions_optimal_func}(b), where $M_{\theta}(x)$ is defined as in Assumption \ref{assumptions_optimal_func}(c). The implicit function theorem implies that there are neighbourhoods $U \subset \mathbb{R}^{p \times q},$ $V \subset \mathbb{R}^p$ of $\mathbb{E}[M_{\theta_0}(X)]$ and $\theta_0$ such that there is a continuously differentiable function $h:U \rightarrow V$ with $F(M,h(M))=0$ for all $M \in U$. Assumption \ref{assumptions_optimal_func}(a) ensures that, for a compact set $K \subset \Theta$ with $\theta_0 \in K$ and a set $B_n=\{\omega \in \Omega \, \vert \, \tilde{\theta}_n \in K\}$, we have $\mathbb{P}(B_n) \rightarrow 1$. By a uniform strong law of large numbers (see for example \cite[Theorem 16(a)]{ferguson2017course}) together with Assumption \ref{assumptions_optimal_func}(d) we know that 
\begin{align*}
    \sup_{\theta \in K} \bigg\Vert \frac{1}{n} \sum_{i=1}^n M_{\theta}(X_i) - \mathbb{E}[M_{\theta}(X)]  \bigg\Vert \overset{\mathrm{a.s.}}{\longrightarrow} 0
\end{align*}
(take $K=\Theta'$). Moreover, we know that $\mathbb{E}[M_{\tilde{\theta}_n}(X)] \overset{\mathbb{P}}{\longrightarrow} \mathbb{E}[M_{\theta_0}(X)]$, where the first expectation is taken with respect to $X$. Hence, we can conclude that $n^{-1}\sum_{i=1}^n M_{\tilde{\theta}_n}(X_i)$ is converging in probability to $\mathbb{E}[M_{\theta_0}(X)]$ with respect to the probability measure conditioned on the sets $B_n$. Thus, by defining $A_n=\big\{ n^{-1} \sum_{i=1}^n M_{\tilde{\theta}_n}(X_i) \in U \big\}$, we have $\mathbb{P}(A_n \, \vert \, B_n) \rightarrow 1$ as $n \rightarrow \infty$. Therefore, we have shown existence and  measurability for each $\omega \in A_n \cap B_n$ with $\mathbb{P}(A_n \cap B_n) \rightarrow 1$, since we have  $\hat{\theta}_n^\star=h(n^{-1} \sum_{i=1}^n M_{\tilde{\theta}_n}(X_i))$. The consistency part follows with $C_n=A_n \cap B_n$.
\end{proof}

\subsection{Proof of Theorem \ref{theorem_efficiency_optimal_functions}}

\begin{proof}
Throughout the proof, let $X \sim \mathbb{P}_{\theta_0}$ be a random variable independent of all randomness involved. We observe that it suffices to consider the random variable $\hat{\theta}_n^\star \, \vert \, C_n$ ($\hat{\theta}_n^\star$ conditioned on the events $C_n$). By a multivariate Taylor expansion with explicit remainder term we have
\begin{align*}
    \sqrt{n}(\hat{\theta}_n^\star - \theta_0) &= -\bigg( \int_0^1 \frac{1}{n} \sum_{i=1}^n \frac{\partial}{\partial \theta} \mathcal{A}_{\theta} f_{a_t(\tilde{\theta}_n)}(X_i) \Big\vert_{\theta=a_t(\hat{\theta}_n^\star)}\, dt \bigg)^{-1} \\
    & \times \bigg( \frac{1}{\sqrt{n}} \sum_{i=1}^n \mathcal{A}_{\theta_0} f_{\theta_0}(X_i)  + \int_0^1  \frac{1}{n} \sum_{i=1}^n \frac{\partial}{\partial \theta} \mathcal{A}_{a_t(\hat{\theta}_n^\star)} f_{\theta}(X_i) \Big\vert_{\theta=a_t(\tilde{\theta}_n)}\, dt \sqrt{n}(\tilde{\theta}_n - \theta_0) \bigg),
\end{align*}
where $a_t(\theta)=\theta_0+ t(\theta-\theta_0)$. Both integrals above have to be understood component-wise. We show
\begin{align} \label{proof_two_step_stoch_conv}
    \int_0^1 \frac{1}{n} \sum_{i=1}^n \frac{\partial}{\partial \theta} \mathcal{A}_{\theta} f_{a_t(\tilde{\theta}_n)}(X_i) \Big\vert_{\theta=a_t(\hat{\theta}_n^\star)} \,dt \overset{\mathbb{P}}{\longrightarrow} \mathbb{E} \bigg[ \frac{\partial}{\partial \theta} \mathcal{A}_{\theta} f_{\theta_0}(X) \Big\vert_{\theta=\theta_0} \bigg].
\end{align}
With Theorem \ref{theorem_consistency_two_step} we know that $\hat{\theta}_n^\star$ is consistent. We assume without loss of generality that $\hat{\theta}_n^\star$ and $\tilde{\theta}_n$ fall in some compact set whose interior contains $\theta_0$, since for any compact neighbourhood $K \subset \mathbb{R}^p$ of $\theta_0$ we have $\mathbb{P}(\{\hat{\theta}_n^\star \notin K \} \cup \{\tilde{\theta}_n \notin K\}) \rightarrow 0$. Take $K=\Theta' \cap \Theta''$. We write
\begin{align} \label{proof_two_step_inequality}
    \bigg\Vert \int_0^1 \frac{1}{n} \sum_{i=1}^n & \frac{\partial}{\partial \theta} \mathcal{A}_{\theta} f_{a_t(\tilde{\theta}_n)}(X_i) \Big\vert_{\theta=a_t(\hat{\theta}_n^\star)}\, dt - \mathbb{E} \bigg[ \frac{\partial}{\partial \theta} \mathcal{A}_{\theta} f_{\theta_0}(X) \Big\vert_{\theta=\theta_0} \bigg] \bigg\Vert \nonumber \\
    \begin{split}  
    \leq & \sup_{\theta_1,\theta_2 \in K} \bigg\Vert \frac{1}{n} \sum_{i=1}^n \frac{\partial}{\partial \theta} \mathcal{A}_{\theta} f_{\theta_2}(X_i) \Big\vert_{\theta=\theta_1} - \mathbb{E} \bigg[ \frac{\partial}{\partial \theta} \mathcal{A}_{\theta} f_{\theta_2}(X) \Big\vert_{\theta=\theta_1} \bigg\Vert \\
    & + \bigg\Vert \int_0^1 \mathbb{E} \bigg[ \frac{\partial}{\partial \theta} \mathcal{A}_{\theta} f_{a_t(\tilde{\theta}_n)}(X) \Big\vert_{\theta=a_t(\hat{\theta}_n^\star)} \bigg]\, dt - \mathbb{E} \bigg[ \frac{\partial}{\partial \theta} \mathcal{A}_{\theta} f_{\theta_0}(X) \Big\vert_{\theta=\theta_0} \bigg] \bigg\Vert.
    \end{split}
\end{align}
The first expression on the right-hand side converges to $0$ in probability by a uniform strong law of large numbers. To see this, note that we have 
\begin{align*}
   \bigg\Vert \frac{\partial}{\partial \theta} \mathcal{A}_{\theta} f_{\theta_2}(x)\Big\vert_{\theta=\theta_1} \bigg\Vert =\bigg\Vert M_{\theta_2}(x)\frac{\partial}{\partial \theta}g(\theta)\Big\vert_{\theta=\theta_1} \bigg\Vert \leq F_1(x) \sup_{\theta \in K} \bigg\Vert \frac{\partial}{\partial \theta}g(\theta) \bigg\Vert
\end{align*}
for all $\theta_1,\theta_2 \in K$. The supremum is finite by Assumption \ref{assumptions_optimal_func}(c) and the expectation exists by Assumption \ref{assumptions_optimal_func}(d). For the second expression in \eqref{proof_two_step_inequality}, we note that the map
\begin{align*}
    (\theta_1,\theta_2) \mapsto \int_0^1 \mathbb{E} \bigg[ \frac{\partial}{\partial \theta} \mathcal{A}_{\theta} f_{a_t(\theta_2)}(X) \Big\vert_{\theta=a_t(\theta_1)} \bigg]\, dt
\end{align*}
is continuous on the set $K \times K$ by Assumptions \ref{assumptions_optimal_func}(c),(d) and dominated convergence. We therefore have, due to the consistency of $\hat{\theta}_n^\star$ and $\tilde{\theta}_n$, that
\begin{align*}
     \int_0^1 \mathbb{E} \bigg[ \frac{\partial}{\partial \theta} \mathcal{A}_{\theta} f_{a_t(\tilde{\theta}_n)}(X) \Big\vert_{\theta=a_t(\hat{\theta}_n^\star)} \bigg]\, dt \overset{\mathbb{P}}{\longrightarrow} \mathbb{E} \bigg[ \frac{\partial}{\partial \theta} \mathcal{A}_{\theta} f_{\theta_0}(X) \Big\vert_{\theta=\theta_0} \bigg],
\end{align*}
and \eqref{proof_two_step_stoch_conv} is established. We calculate the latter expectation and and obtain with Assumptions (iii), (iv) and with the definition of $f_{\theta_0}$
\begin{align*}
   &  \mathbb{E} \bigg[ \frac{\partial}{\partial \theta} \mathcal{A}_{\theta} f_{\theta_0}(X) \Big\vert_{\theta=\theta_0} \bigg]\\
   &\quad= \mathbb{E} \bigg[ \frac{\frac{\partial}{\partial \theta} \big(p_{\theta}(X) \tau_{\theta}(X) f_{\theta_0}(X) \big)' \big\vert_{\theta=\theta_0}}{p_{\theta_0}(X)} - \frac{\frac{\partial}{\partial \theta} p_{\theta}(X) \big\vert_{\theta=\theta_0} \big(p_{\theta_0}(X) \tau_{\theta_0}(X) f_{\theta_0}(X) \big)'}{\big(p_{\theta_0}(X)\big)^2}  \bigg] \\
    &\quad=  \int_a^b \frac{\partial}{\partial \theta}  \big(p_{\theta}(x) \tau_{\theta}(x) f_{\theta_0}(x) \big)' \Big\vert_{\theta=\theta_0}\, dx - I(\theta_0)\\
    &\quad= - I(\theta_0).
\end{align*}
One can show in a similar manner that we have
\begin{align*}
      \int_0^1 \frac{1}{n} \sum_{i=1}^n \frac{\partial}{\partial \theta} \mathcal{A}_{a_t(\hat{\theta}_n^\star)} f_{\theta}(X_i) \Big\vert_{\theta=a_t(\tilde{\theta}_n)}\, dt \overset{\mathbb{P}}{\longrightarrow} \mathbb{E} \bigg[ \frac{\partial}{\partial \theta} \mathcal{A}_{\theta_0} f_{\theta}(X) \Big\vert_{\theta=\theta_0} \bigg].
\end{align*}
With Assumption \ref{assumptions_optimal_func}(d) we are allowed to switch expectation and differentiation to obtain
\begin{align*}
     \mathbb{E} \bigg[ \frac{\partial}{\partial \theta} \mathcal{A}_{\theta_0} f_{\theta}(X) \Big\vert_{\theta=\theta_0} \bigg]=0,
\end{align*}
since $f_{\theta} \in \mathscr{F}_{\theta}$ for all $\theta \in \Theta$. The claim follows with Slutsky's theorem, the central limit theorem, Assumptions (i) and (iv) and the continuous mapping theorem.
\end{proof}

\subsection{Proof of Theorem \ref{theorem_sequence_stein_estimator}}

\begin{proof}
Let $X \sim \mathbb{P}_{\theta_0}$ and let $L_n:\Theta \rightarrow \Theta$ be the map which maps $\hat{\theta}_n^{(m)}$ to  $\hat{\theta}_n^{(m+1)}$. We show that $L_n$ is a contraction on $\Theta_0$ with probability converging to $1$. With Assumption \ref{assumptions_iteratice_proc}(d) we know that
\begin{align} \label{proof_mle_uniform_conv}
    \sup_{\theta \in \Theta_0} \bigg\Vert \frac{1}{n} \sum_{i=1}^n M_{\theta}(X_i) - \mathbb{E}[M_{\theta}(X)] \bigg\Vert \overset{\mathrm{a.s.}}{\longrightarrow} 0.
\end{align}
Suppose without loss of generality that $\Theta_0 \neq \{\theta_0\}$ and let $V \subset \Theta_0$ be an open neighbourhood of $\theta_0$. Assumption \ref{assumptions_iteratice_proc}(c) implies in a similar way as in the proof of Theorem \ref{theorem_consistency} that for each $\theta \in \Theta_0$ there exists a unique continuously differentiable function $h_{\theta}:U_{\theta}\rightarrow V$ where $U_{\theta} \subset \mathbb{R}^{p \times q}$ is an open neighbourhood of $\mathbb{E}[M_{\theta}(X)]$ such that $F(M, h_{\theta}(M))=0$ for all $M \in U_{\theta}$ where $ F(M,\theta)= Mg(\theta), M \in \mathbb{R}^{p \times q}, \theta \in \mathbb{R}^p$. Since $\Theta_0$ is compact, we find a finite cover $U_{\theta_1},\ldots,U_{\theta_b}$ from $\cup_{\theta \in \Theta_0} U_{\theta}$ of the set $\{ \mathbb{E}[M_{\theta}(X)], \, \theta \in \Theta_0 \}$. Let $\tilde{U}=\cup_{i=1}^b U_{\theta_i} $ and we conclude the existence of a continuously differentiable function $h:\tilde{U} \rightarrow V$ with $F(M,h(M))=0$ for all $M \in \tilde{U}$. We define $B_n \subset \Omega$ by $ B_n= \big\{\frac{1}{n} \sum_{i=1}^n M_{\theta}(X_i)   \in \tilde{U} \text{ for all } \theta \in \Theta_0 \big\}$ and have $\mathbb{P}(B_n) \rightarrow 1$ by \eqref{proof_mle_uniform_conv}. Then, on $ B_n$ we know that $L_n(\theta)=h\big(\frac{1}{n} \sum_{i=1}^n M_{\theta}(X_i)\big)$, $\theta \in \Theta_0$ and therefore $L_n(\Theta_0) \subset \Theta_0$. \par
We use an argument similar to \cite[Lemma 3]{dominitz2005some}. With a Taylor expansion we have, for $\theta_1,\theta_2 \in \Theta_0$ and $\omega \in B_n$, that
\begin{align*}
 &\Vert  L_n(\theta_1) - L_n(\theta_2) \Vert  \\
 &\leq \bigg\Vert \int_0^1 \bigg(\frac{1}{n} \sum_{i=1}^n M_{a_t}(X_i) \frac{\partial}{\partial \theta} g(\theta) \Big\vert_{\theta=L_n(a_t)}\bigg)^{-1} \bigg(\frac{1}{n} \sum_{i=1}^n \frac{\partial}{\partial \theta} M_{\theta}(X_i)  g(L_n(a_t)) \Big\vert_{\theta=a_t}\bigg) \,dt \bigg\Vert \Vert \theta_1 - \theta_2 \Vert,
\end{align*}
where $a_t=\theta_1+t(\theta_2-\theta_1)$. The integral has to be understood component-wise. We denote the matrix-valued integral above by $K_n^{(1)}$ and define
\begin{align*}
    K_n^{(2)}=\int_0^1 \bigg(\mathbb{E} \bigg[ M_{a_t}(X) \frac{\partial}{\partial \theta} g(\theta) \Big\vert_{\theta=L_n(a_t)} \bigg]\bigg)^{-1} \bigg(\frac{1}{n} \sum_{i=1}^n \frac{\partial}{\partial \theta} M_{\theta}(X_i)  g(L_n(a_t)) \Big\vert_{\theta=a_t}\bigg)\, dt,
\end{align*}
where the expectation is taken with respect to the random variable $X$. Then we have
\begin{align*}
    \Vert K_n^{(1)} - K_n^{(2)} \Vert \leq & \int_0^1 \bigg\Vert \bigg( \mathbb{E} \bigg[ M_{a_t}(X) \frac{\partial}{\partial \theta} g(\theta) \Big\vert_{\theta=L_n(a_t)} \bigg] \bigg)^{-1} \bigg\Vert \bigg\Vert \frac{1}{n} \sum_{i=1}^n M_{a_t}(X_i) \frac{\partial}{\partial \theta} g(\theta) \Big\vert_{\theta=L_n(a_t)} \\
    &- \mathbb{E} \bigg[ M_{a_t}(X) \frac{\partial}{\partial \theta} g(\theta) \Big\vert_{\theta=L_n(a_t)} \bigg] \bigg\Vert \bigg\Vert \bigg( \frac{1}{n} \sum_{i=1}^n M_{a_t}(X_i) \frac{\partial}{\partial \theta} g(\theta) \Big\vert_{\theta=L_n(a_t)} \bigg)^{-1} \bigg\Vert \\
    & \times \bigg\Vert  \frac{1}{n} \sum_{i=1}^n \frac{\partial}{\partial \theta} M_{\theta}(X_i)  g(L_n(a_t)) \Big\vert_{\theta=a_t} \bigg\Vert \,dt.
\end{align*}
This expression can be further bounded by
\begin{align*}
     & \sup_{\gamma_1, \gamma_2 \in \Theta_0} \bigg\Vert \bigg( \mathbb{E} \bigg[ M_{\gamma_1}(X) \frac{\partial}{\partial \theta} g(\theta) \Big\vert_{\theta=\gamma_2} \bigg] \bigg)^{-1} \bigg\Vert \\
     & \times \sup_{\gamma_1,\gamma_2 \in \Theta_0} \bigg\Vert \frac{1}{n} \sum_{i=1}^n M_{\gamma_1}(X_i) \frac{\partial}{\partial \theta} g(\theta) \Big\vert_{\theta=\gamma_2} -   \mathbb{E} \bigg[ M_{\gamma_1}(X) \frac{\partial}{\partial \theta} g(\theta) \Big\vert_{\theta=\gamma_2} \bigg] \bigg\Vert \\
     & \times \sup_{\gamma_1,\gamma_2 \in \Theta_0} \bigg\Vert \bigg( \frac{1}{n} \sum_{i=1}^n M_{\gamma_1}(X_i) \frac{\partial}{\partial \theta} g(\theta) \Big\vert_{\theta=\gamma_2} \bigg)^{-1} \bigg\Vert \\
    & \times \sup_{\gamma_1,\gamma_2 \in \Theta_0} \bigg\Vert  \frac{1}{n} \sum_{i=1}^n \frac{\partial}{\partial \theta} M_{\theta}(X_i)  g(\gamma_2) \Big\vert_{\theta=\gamma_1} \bigg\Vert \\
    &=V^{(1)} \times V_n^{(2)} \times V_n^{(3)} \times V_n^{(4)}.
\end{align*}
We have $V^{(1)}<\infty$ by Assumptions \ref{assumptions_iteratice_proc}(c),(d). Moreover, $V_n^{(2)}$ converges almost surely to $0$ due to Assumption \ref{assumptions_iteratice_proc}(d). This implies as well that there exists a sequence of sets $C_n$ with $\mathbb{P}_{\theta_0}(C_n) \rightarrow 1$ such that $n^{-1} \sum_{i=1}^n M_{\gamma_1}(X_i) \frac{\partial}{\partial \theta} g(\theta) \vert_{\theta=\gamma_2}$ is invertible for all $\omega \in C_n$ and $\gamma_1,\gamma_2 \in \Theta_0$. We now tackle $V_n^{(3)}$. An application of a standard inequality for the spectral norm yields
\begin{align*}
    V_n^{(3)} & \leq  \sup_{\gamma_1,\gamma_2 \in \Theta_0} \bigg\vert \det\bigg(  \frac{1}{n} \sum_{i=1}^n M_{\gamma_1}(X_i) \frac{\partial}{\partial \theta} g(\theta) \Big\vert_{\theta=\gamma_2} \bigg)\bigg\vert^{-1} \bigg\Vert \frac{1}{n} \sum_{i=1}^n M_{\gamma_1}(X_i) \frac{\partial}{\partial \theta} g(\theta) \Big\vert_{\theta=\gamma_2} \bigg\Vert^{p-1} \\
   & \leq  \bigg( \inf_{\gamma_1,\gamma_2 \in \Theta_0}  \bigg\vert \det\bigg(  \frac{1}{n} \sum_{i=1}^n M_{\gamma_1}(X_i) \frac{\partial}{\partial \theta} g(\theta) \Big\vert_{\theta=\gamma_2} \bigg)\bigg\vert \bigg)^{-1}\\
   &\quad\times\bigg( \sup_{\gamma_3,\gamma_4 \in \Theta_0}  \bigg\Vert \frac{1}{n} \sum_{i=1}^n M_{\gamma_3}(X_i) \frac{\partial}{\partial \theta} g(\theta) \Big\vert_{\theta=\gamma_4} \bigg\Vert \bigg)^{p-1}.
\end{align*}
For $\omega \in C_n$ we have that the infimum above is always strictly positive. The supremum can be bounded as follows:
\begin{align*}
   & \sup_{\gamma_3,\gamma_4 \in \Theta_0}  \bigg\Vert \frac{1}{n} \sum_{i=1}^n M_{\gamma_3}(X_i) \frac{\partial}{\partial \theta} g(\theta) \Big\vert_{\theta=\gamma_4} \bigg\Vert \\
   &\quad\leq  \sup_{\gamma_1,\gamma_2 \in \Theta_0} \bigg\Vert \frac{1}{n} \sum_{i=1}^n M_{\gamma_1}(X_i) \frac{\partial}{\partial \theta} g(\theta) \Big\vert_{\theta=\gamma_2} -   \mathbb{E} \bigg[ M_{\gamma_1}(X) \frac{\partial}{\partial \theta} g(\theta) \Big\vert_{\theta=\gamma_2} \bigg] \bigg\Vert\\
  &\quad\quad + \sup_{\gamma_1,\gamma_2 \in \Theta_0} \bigg\Vert \mathbb{E} \bigg[ M_{\gamma_1}(X) \frac{\partial}{\partial \theta} g(\theta) \Big\vert_{\theta=\gamma_2} \bigg] \bigg\Vert,
\end{align*}
where the first term converges to $0$ almost surely and the second term is bounded by Assumptions \ref{assumptions_iteratice_proc}(c),(d). We conclude that $V_n^{(3)}$ is bounded with probability converging to $1$ (take $\omega \in B_n \cap C_n$). We can proceed with $V_n^{(4)}$ in a similar way. This entails that for all $\omega \in B_n \cap C_n$ we have $ \Vert K_n^{(1)} - K_n^{(2)} \Vert \rightarrow 0$. We continue and define
\begin{align*}
        K_n^{(3)}=\int_0^1 \bigg(\mathbb{E} \bigg[ M_{a_t}(X) \frac{\partial}{\partial \theta} g(\theta) \Big\vert_{\theta=\theta_0} \bigg]\bigg)^{-1} \bigg(\frac{1}{n} \sum_{i=1}^n \frac{\partial}{\partial \theta} M_{\theta}(X_i)  g(L_n(a_t)) \Big\vert_{\theta=a_t}\bigg)\, dt.
\end{align*}
Direct calculations give
\begin{align*}
    \Vert K_n^{(2)} - K_n^{(3)} \Vert \leq & \int_0^1 \bigg\Vert \bigg(\mathbb{E} \bigg[ M_{a_t}(X) \frac{\partial}{\partial \theta} g(\theta) \Big\vert_{\theta=L_n(a_t)} \bigg]\bigg)^{-1}  -\bigg(\mathbb{E} \bigg[ M_{a_t}(X) \frac{\partial}{\partial \theta} g(\theta) \Big\vert_{\theta=\theta_0} \bigg]\bigg)^{-1} \bigg\Vert \\
    & \times \bigg\Vert\frac{1}{n} \sum_{i=1}^n \frac{\partial}{\partial \theta} M_{\theta}(X_i)  g(L_n(a_t)) \Big\vert_{\theta=a_t}\bigg\Vert \,dt.
\end{align*}
Then, with dominated convergence, the continuity of $\frac{\partial}{\partial \theta} g(\theta)$ and the matrix inversion, and Theorem \ref{theorem_consistency}, we conclude that $ \Vert K_n^{(2)} - K_n^{(3)} \Vert \rightarrow 0$ on $B_n$. Next, we take
\begin{align*}
    K_n^{(4)}=\int_0^1 \bigg(\mathbb{E} \bigg[ M_{a_t}(X) \frac{\partial}{\partial \theta} g(\theta) \Big\vert_{\theta=\theta_0} \bigg]\bigg)^{-1} \mathbb{E} \bigg[ \frac{\partial}{\partial \theta} M_{\theta}(X)  g(\theta_0) \Big\vert_{\theta=a_t}\bigg] \,dt.
\end{align*}
With similar techniques as before, it is not difficult to see that we have  $ \Vert K_n^{(3)} - K_n^{(4)} \Vert \rightarrow 0$ on $B_n$. Moreover, since we have $\mathcal{A}_{\theta_1}f_{\theta_2}(x)=M_{\theta_2}(x)g(\theta_1)$, $\theta_1,\theta_2 \in \Theta$, we know the integrand in $ K_n^{(4)}$ is just the derivative of the constant function which maps any $\theta \in \Theta_0$ to $\theta_0$ and is therefore equal to $0$ for any $t \in [0,1]$. Therefore, $ K_n^{(4)}=0$ and together with all preceding calculations we have shown $\Vert K_n^{(1)} \Vert \rightarrow 0$ on $B_n \cap C_n$. Hence we know that there exists a sequence of sets $A_n$ with $\mathbb{P}_{\theta_0}(A_n) \rightarrow 1$ such that $ K_n^{(1)} \leq 1$ on $A_n$, and the function $L_n$ is a contraction on $\Theta_0$ for each $\omega \in A_n$. Since the MLE is the unique fixed point of $L_n$ and will fall into $\Theta_0$ with probability converging to $1$ by Assumption \ref{assumptions_iteratice_proc}(a), the Banach fixed point theorem yields the claim.
\end{proof}

\bibliographystyle{abbrv}

\end{document}